\newcommand{\reff}[1]{(\ref{#1})}
\newcommand{\IH}{\mathbb{H}}
\newcommand{\ID}{\mathbb{D}}
\newcommand{\IW}{\mathbb{W}}
\newcommand{\IE}{\mathbb{E}}
\newcommand{\IK}{\mathbb{K}}
\newcommand{\sG}{\mathcal{G}}
\newcommand{\pto}{\overset{p}{\to}}
\newcommand{\dto}{\overset{d}{\to}}
\newcommand{\R}{\mathbb{R}}
\newcommand{\Z}{\mathbb{Z}}
\newcommand{\N}{\mathbb{N}}
\newcommand{\E}{\mathbb{E}}
\newcommand{\IP}{\mathbb{P}}
\newcommand{\sF}{\mathcal{F}}
\newcommand{\sH}{\mathcal{H}}
\newcommand{\G}{\mathbb{G}}
\newcommand{\Ii}{\mathbbm{1}}
\newcommand{\Cov}{\mathrm{Cov}}
\newcommand{\Var}{\mathrm{Var}}
\newcommand{\eps}{\varepsilon}
\newcommand{\norm}[1]{\left\lVert#1\right\rVert}
\newcommand{\undersim}[1]{\mathrel{\mathpalette\@undersim{#1}}}
\newcommand{\@undersim}[2]{%
  \vcenter{%
    \ialign{%
      ##\cr
      $\m@th#1#2$\cr
      \noalign{\nointerlineskip\kern.2ex}
      $\m@th#1\sim$\cr
      \noalign{\kern-.4ex}
    }%
  }%
}
\newtheorem{thm}{Theorem}[section]
\newtheorem{lem}[thm]{Lemma}
\newtheorem{cor}[thm]{Corollary}
\newtheorem{assum}[thm]{Assumption}
\theoremstyle{definition}
\newtheorem{defi}[thm]{Definition}
\newtheorem{rem}[thm]{Remark}
\newtheorem{ex}[thm]{Example}
\numberwithin{equation}{section}
\begin{document}

\title{Empirical process theory for locally stationary processes}

\thispagestyle{empty}

\begin{center}
{\LARGE \bf Empirical process theory for locally stationary processes}\\
{\large Nathawut Phandoidaen, Stefan Richter}\\

{phandoidaen@math.uni-heidelberg.de, stefan.richter@iwr.uni-heidelberg.de}\\

{\small Institut für angewandte Mathematik, Im Neuenheimer Feld 205, Universität Heidelberg}\\

{\small To appear in Bernoulli}\\

\today
\end{center}

\begin{abstract}
We provide a framework for empirical process theory of locally stationary processes using the functional dependence measure. Our results extend known results for stationary Markov chains and mixing sequences by another common possibility to measure dependence and allow for additional time dependence. Our main result is a functional central limit theorem for locally stationary processes. Moreover, maximal inequalities for expectations of sums are developed. We show the applicability of our theory in some examples, for instance we provide uniform convergence rates for nonparametric regression with locally stationary noise.
\end{abstract}

\section{Introduction}
\label{sec_intro}

Empirical process theory is a powerful tool to prove uniform convergence rates and weak convergence of composite functionals. The theory for independent variables is well-studied (cf. \cite{dudley}, \cite{nickl}, \cite{wellner} or \cite{Vaart98} for an overview) based on the original ideas of \cite{donsker1952}, \cite{dudley1966}, \cite{dudley1978},  \cite{pollard1982} and \cite{ossiander}  among others. For random variables with dependence structure, various approaches have been discussed. There exists a well-developed empirical process theory and large deviation results for Harris-recurrent Markov chains based on regenerative schemes (cf. \cite{levental}, \cite{samur}, \cite{ellis} and \cite{adamczak} among others) or geometric ergodicity (cf. \cite{kulik}). To quantify the speed of convergence in maximal inequalities, additional assumptions like $\beta$-recurrence (cf. \cite{tjost01}) have to be imposed. The theory covers a rich class of Markov chains, but for instance, can not discuss linear processes. 

An empirical process theory for stationary processes under high-level assumptions on the moments of means was derived in \cite{dehling1} and further discussion papers. In the paradigm of weak dependence (which measures the size of covariances of  Lipschitz functions of the random variables), \cite{doukhan07} derived Bernstein-type inequalities. Focusing on the analysis of the empirical distribution function (EDF), much more techniques were discussed: For instance, \cite[Theorem 4]{durieu14} provide uniform convergence of the EDF by using bounds for covariances of H\"older functions of the random variables. Another abstract concept was introduced by \cite{berkes09} via S-mixing (for stationary mixing), which imposes the existence of $m$-dependent approximations of the original observations. They then derive strong approximations and uniform central limit theorems for the EDF.

A different idea to measure dependence of random variables is given by mixing coefficients. Here, several concepts were introduced, the most common (with increasing strength) being $\alpha$-, $\beta$- and $\phi$-mixing (for an overview about mixing coefficients, cf. \cite{doukhan94}). Large deviation results and uniform central limit theorems for general classes of functions (not only EDF) were derived by using coupling techniques, cf. \cite{rio1995}, \cite{liebscher1996} for $\alpha$-mixing, \cite{arcones}, \cite{yu_bin}, and successively refined by \cite{doukhan1995}, \cite{rio98}, \cite{dedecker07}, \cite{dedecker10} (the last two developed for EDFs only) and  \cite{rio17} for $\beta$-mixing, and \cite{Dedeck02}, \cite{dehling2001} for $\beta$- and $\phi$-mixing. See also \cite{pollard_overview}, \cite{Dedeck02} and \cite{rio17} for comprehensive overviews.

In \cite{Dedeck02} it is argued that  $\beta$-mixing is the weakest mixing assumption that allows for a ``complete'' empirical process theory which incorporates maximal inequalities and uniform central limit theorems. There exist explicit upper bounds for $\beta$-mixing coefficients for Markov chains (cf. \cite{heinrich}) and for so-called V-geometric mixing coefficients (cf. \cite{mt09}).  For several stationary time series models like linear processes (cf. \cite{pham_tran} for $\alpha$-mixing), ARMA (cf. \cite{mixing_arma}), nonlinear AR (cf. \cite{tjost01}) and GARCH processes (cf. \cite{mixing_garch}) there also exist upper bounds on mixing coefficients. A common assumption in these results is that the observed process or, more often, the innovations of the corresponding process, have a continuous distribution. This is a crucial assumption to handle the relatively complicated mixing coefficients defined over a supremum over two different sigma-algebras. A relaxation of $\beta$-mixing coefficients was investigated by \cite[Theorem 1]{dedecker07} and is specifically designed for the analysis of the EDF. In opposite to sigma-algebras, these smaller coefficients are defined with conditional expectations of certain classes of functions and are easier to upper bound for a wide range of time series models.

During the last years, another measure for dependence, the so-called functional dependence measure, became popular (cf. \cite{wu2005anotherlook}), which uses a Bernoulli shift representation (see \reff{representation_x} below) and decomposition into martingales and $m$-dependent sequences. It has been shown in various applications that the functional dependence measure allows, when combined with the rich theory of martingales, for sharp large deviation inequalities (cf. \cite{Wu13} or \cite{wuzhang2017}). In  \cite{wu2008} and \cite{mayer2019}, uniform central limit theorems for the EDF were derived for stationary and piecewise locally stationary processes.

Up to now, no \emph{general} empirical process theory (allowing for general classes of functions) using the functional dependence measure is available. 
In this paper we will fill this gap and prove maximal inequalities and functional central limit theorems under functional dependence. Furthermore, we will draw connections and compare our results to already existing empirical process concepts for dependent data we mentioned above. While the empirical process theory for Markov chains and mixing cited above was developed for stationary processes, we will work in the framework of locally stationary processes and therefore automatically provide the first general empirical process theory in this setting (\cite{dahlhauspolonik} investigated spectral empirical processes for linear processes, \cite{mayer2019} proved a functional central limit theorem for a localized empirical distribution function). Locally stationary processes allow for a smooth change of the distribution over time but can locally approximated by stationary processes. Therefore, they provide more flexible time series models (cf. \cite{dahlhaus2019} for an introduction).

The functional dependence measure uses a representation of the given process as a Bernoulli shift process and quantifies dependence with a $L^{\nu}$-norm. More precisely, we assume that $X_i =  (X_{ij})_{j=1,...,d}$, $i = 1,...,n$, is a $d$-dimensional process of the form
\begin{equation}
    X_i = J_{i,n}(\sG_i),\label{representation_x}
\end{equation}
where $\sG_i = \sigma(\varepsilon_i,\varepsilon_{i-1},...)$ is the sigma-algebra generated by $\varepsilon_i$, $i \in\Z$, a sequence of i.i.d. random variables in $\R^{\tilde d}$ ($d,\tilde d \in \N$), and some measurable function $J_{i,n}:(\R^{\tilde d})^{\N_0}\to \R$, $i=1,...,n$, $n\in\N$. For a real-valued random variable $W$ and some $\nu > 0$, we define $\|W\|_{\nu} := \IE[|W|^{\nu}]^{1/\nu}$. 
If $\varepsilon_k^{*}$ is an independent copy of $\varepsilon_k$, independent of $\varepsilon_i, i\in\Z$, we define $\sG_i^{*(i-k)} := (\varepsilon_i,...,\varepsilon_{i-k+1},\varepsilon_{i-k}^{*},\varepsilon_{i-k-1},...)$ and $X_i^{*(i-k)} := J_{i,n}(\sG_{i}^{*(i-k)})$. The uniform functional dependence measure is given by
\begin{equation}
    \delta_{\nu}^{X}(k) = \sup_{i=1,...,n}\sup_{j=1,...,d}\big\|X_{ij} - X_{ij}^{*(i-k)}\big\|_\nu.\label{definition_uniform_functional_dependence_measure}
\end{equation}
Graphically, $\delta_{\nu}^{X}$ measures the impact of $\varepsilon_0$ in $X_k$. Although representation \reff{representation_x} appears to be rather restrictive, it does cover a large variety of  processes. In \cite{borkar1993} it was motivated that the set of all processes of the form $X_i = J(\varepsilon_i,\varepsilon_{i-1},...)$ should be equal to the set of all stationary and ergodic processes. We additionally allow $J$ to vary with $i$ and $n$ to cover processes which change their stochastic behavior over time. This is exactly the form of the so-called locally stationary processes discussed in \cite{dahlhaus2019}.

Since we are working in the time series context, many applications ask for functions $f$ that not only depend on the actual observation of the process but on the whole (infinite) past $Z_i := (X_i,X_{i-1},X_{i-2},...)$. In the course of this paper, we aim to derive asymptotic properties of the empirical process
\begin{equation}
    \G_n(f) := \frac{1}{\sqrt{n}}\sum_{i=1}^{n}\big\{ f(Z_i,\frac{i}{n}) - \E f(Z_i,\frac{i}{n})\big\}, \quad f \in \sF,\label{definition_empirical_process}
\end{equation}
where
\[
    \sF \subset \{f:(\R^d)^{\N_0}\times [0,1] \to \R \text{ measurable}\}.
\]
Let $\IH(\varepsilon,\sF,\|\cdot\|)$ denote the bracketing entropy, that is, the logarithm of the number of $\varepsilon$-brackets with respect to some distance $\|\cdot\|$ that is necessary to cover $\sF$ (this is made precise at the end of this section). We will define a distance $V_n$ which guarantees weak convergence of \reff{definition_empirical_process} if the corresponding bracketing entropy integral $\int_0^{1}\sqrt{\IH(\varepsilon,\sF,V_n)}d\varepsilon$ is finite.

The definition of the functional dependence measure for locally stationary processes is similar to its stationary version and is easy to calculate for many time series models. It does not rely on the stationarity assumption but on the representation of the process as a Bernoulli shift. Therefore, many well-known upper bounds for stationary time series given in  \cite{wu2011}, including recursively defined models and linear models, directly carry over to the locally stationary case \reff{definition_uniform_functional_dependence_measure}. It seems reasonable to use it as a starting point to generalize empirical process theory for stationary processes to the more general setting of local stationarity. While the other two paradigms mentioned above should also allow such a generalization in principle, there are open questions:
\begin{itemize}
    \item The theory for Harris-recurrent Markov chain relies on stationarity and intrinsically needs some knowledge about the whole time series due to the assumption of null-recurrence. There exist generalizations of local stationary Markov chains (cf. for instance  \cite{truquet}), but the corresponding recurrence properties and examples of locally stationary time series are not worked out yet. Furthermore, it is not directly clear how to deal with processes $Z_i$ which incorporate the infinite past of $X_i$, and linear processes can not be discussed easily.
    \item Absolutely regular $\beta$-mixing is shown in most examples by assuming some continuity in the distribution or in the corresponding innovations. Especially for linear processes, the bounds are quite hard to obtain and seem not to be optimal.  Moreover, there exist no ``invariance rules'' which would directly allow to transfer the mixing properties of $X_i$ to $f(Z_i,\frac{i}{n})$ which incorporates \emph{infinitely} many lags of $X_i$.
    \item Many of the more elaborated dependence concepts developed in e.g. \cite{dedecker07}, \cite{dehling2001}, \cite{durieu14}, \cite{berkes09} are restricted (at least in their original formulation) to the discussion of the EDF or connected one-dimensional indexed function classes.
\end{itemize}
Contrary to $\beta$-mixing, the functional dependence measure can easily deal with $f(Z_i,\frac{i}{n})$ by using H\"older-type assumptions on $f$. Furthermore, it easily can be calculated in many situations and is not restricted to noncontinuous distributions of $X_i$. Also, linear processes $X_i$ are covered.

However, there are also some peculiarities using the functional dependence measure  \reff{definition_uniform_functional_dependence_measure}. While for Harris-recurrent Markov chains and $\beta$-mixing, the empirical process theory is independent of the function class considered, the situation for the functional dependence measure is more complicated. In order to quantify the dependence of $f(Z_i,\frac{i}{n})$ by $\delta_{\nu}^{X}$, we have to impose smoothness conditions on $f$ in direction of its first argument. The distance $V_n$ therefore will not only change with the dependence structure of $X$, but also has to be ``compatible'' with the function class $\sF$. The smoothness condition on $f$ also poses a challenging issue when considering chaining procedures where rare events are excluded by (non-smooth) indicator functions.

Our main contributions in this paper are the following:
\begin{itemize}
    \item We derive maximal inequalities for $\G_n(f)$ for classes of functions $\sF$, 
    \item a chaining device which preserves smoothness during the chaining procedure and
    \item conditions to ensure asymptotic tightness and functional convergence of $\G_n(f)$, $f\in\sF$.
\end{itemize}



The paper is organized as follows. In Section \ref{sec_model}, we present our main result Theorem \ref{cor_functional_central_limit_theorem}, the functional central limit theorem under minimal moment conditions. As a special case, we derive a version for stationary processes. We give a discussion on the distance $V_n$ and compare our result with the empirical process theory for $\beta$-mixing. Some Assumptions are postponed to Section \ref{sec_clt}, where a new multivariate central limit theorem for locally stationary processes is presented. In Section \ref{sec_max_asymptotic_tightness}, we provide new maximal inequalities for $\G_n(f)$ for both finite and infinite $\sF$. In Section \ref{sec_examples}, we apply our theory to prove uniform convergence rates and weak convergences of several estimators. The aim of the last section is to highlight the wide range of applicability of our theory and to provide the typical conditions which have to be imposed as well as some discussion. In Section \ref{sec_conclusion}, a conclusion is drawn. We illustrate the main steps of the proofs in the Appendix of the article but postpone all detailed proofs to the  Supplementary Material.

We now introduce some basic notation. For $a,b\in\R$, let $a\wedge b := \min\{a,b\}$, $a\vee b := \max\{a,b\}$. For $k\in\N$,
\begin{equation}
    H(k) := 1 \vee \log(k) \label{H_var}
\end{equation}
which naturally appears in large deviation inequalities. For a given finite class $\sF$, let $|\sF|$ denote its cardinality. We use the abbreviation
\begin{equation}
    H = H(|\sF|) = 1 \vee \log |\sF| \label{H_fix}
\end{equation}
if no confusion arises. For some distance $\|\cdot\|$, let $\N(\varepsilon,\sF,\|\cdot\|)$ denote the bracketing numbers, that is, the smallest number of $\varepsilon$-brackets $[l_j,u_j] := \{f\in \sF: l_j \le f \le u_j\}$ (i.e. measurable functions $l_j,u_j:(\R^{d})^{\N_0}\times[0,1]\to\R$ with $\|u_j - l_j\| \le \varepsilon$ for all $j$) to cover $\sF$. Let $\IH(\varepsilon,\sF,\|\cdot\|) := \log \N(\varepsilon,\sF,\|\cdot\|)$ denote the bracketing entropy. For $\nu \ge 1$, let
\[
    \|f\|_{\nu,n} := \Big(\frac{1}{n}\sum_{i=1}^{n}\big\|f\big(Z_{i},\frac{i}{n}\big)\big\|_\nu^\nu\Big)^{1/\nu}.
\]

\section{A new functional central limit theorem}
\label{sec_model}

Roughly speaking, a process $X_i$, $i=1,...,n$ is called locally stationary if for each $u\in[0,1]$, there exists a stationary process $\tilde X_i(u)$, $i=1,...,n$ such that $X_i \approx \tilde X_i(u)$ if $|u - \frac{i}{n}|$ is small (cf. \cite{dahlhaus2019}). Typical estimators are of the form
\[
    \frac{1}{nh}\sum_{i=1}^{n}K\big(\frac{i/n-u}{h}\big) \bar f(Z_i,\frac{i}{n})
\]
where $K$ is a kernel function and $h = h_n > 0$ is a bandwidth. Clearly, such a localization changes the convergence rate. To cover these cases, we suppose that any $f\in \sF$ has a representation
\begin{equation}
    f(z,u) = D_{f,n}(u)\cdot \bar f(z,u), \quad\quad z\in(\R^d)^{\N_0}, u\in[0,1],\label{f_form_decomposition}
\end{equation}
where $\bar f$ is independent of $n$ and $D_{f,n}(u)$ is independent of $z$. We put
\begin{equation}
    \bar\sF := \{\bar f: f\in \sF\}.\label{definition_fbar}
\end{equation}

The function class $\bar\sF$ is considered to consist of H\"older-continuous functions in direction of $z$. For $s \in (0,1]$, a sequence $z = (z_i)_{i\in\N_0}$ of elements of $\R^d$ (equipped with the maximum norm $|\cdot|_{\infty}$) and an absolutely summable sequence $\chi = (\chi_i)_{i\in\N_0}$ of nonnegative real numbers, we set
\[
    |z|_{\chi,s} := \Big(\sum_{i=0}^{\infty}\chi_i |z_i|_{\infty}^s\Big)^{1/s}
\]
and $|z|_{\chi} := |z|_{\chi,1}$.

\begin{defi} $\bar\sF$ is called a $(L_{\sF},s,R,C)$-class if $L_{\sF} = (L_{\sF,i})_{i\in\N_0}$ is a sequence of nonnegative real numbers, $s \in (0,1]$ and $R:(\R^d)^{\N_0} \times [0,1] \to [0,\infty)$ satisfies for all $u\in[0,1]$, $z,z' \in (\R^d)^{\N_0}$, $\bar f \in \bar \sF$,
    \[
         |\bar f(z,u) - \bar f(z',u)| \le |z-z'|_{L_{\sF},s}^s\cdot \big[R(z,u) + R(z',u)\big].
    \]
    Furthermore, $C = (C_R,C_{\bar f}) \in (0,\infty)^2$ satisfies $\sup_u|\bar f(0,u)| \le C_{\bar f}$, $\sup_u|R(0,u)| \le C_R$.
\end{defi}

The basic assumption for our main result is the following compatibility condition on $\sF$.

\begin{assum}\label{ass1_vorher}
    $\bar\sF$ is a $(L_{\sF},s,R,C)$-class. There exists $p\in (1,\infty]$, $C_X > 0$ such that
    \begin{equation}
        \sup_{i,u}\| R(Z_{i},u)\|_{2p} \le C_R, \quad\quad \sup_{i,j}\|X_{ij}\|_{\frac{2sp}{p-1}} \le C_X.\label{ass1_eq1_vorher}
    \end{equation}
    Let $\ID_n \ge 0$, $\Delta(k) \ge 0$ be such that for all $k\in\N_0$,
    \[
        2d C_R \cdot \sum_{j = 0}^kL_{\sF,j} (\delta_{\frac{2sp}{p-1}}^X(k-j))^s \le \Delta(k), \quad\quad \sup_{f\in \sF} \Big(\frac{1}{n}\sum_{i=1}^{n}\big|D_{f,n}(\frac{i}{n})\big|^{2}\Big)^{1/2} \le \ID_{n}.
    \]
\end{assum}

While \reff{ass1_eq1_vorher} summarizes moment assumptions on $X_{ij}$ which are balanced by $p$, the sequence $\Delta(k)$ reflects the intrinsic dependence of $f(Z_i,\frac{i}{n})$ and $\ID_n$ measures the influence of the factor $D_{f,n}(u)$ to the convergence rate of $\G_n(f)$.

Based on Assumption \ref{ass1_vorher}, we define for $f\in\sF$
\begin{equation}
    V_n(f) := \|f\|_{2,n} + \sum_{k=1}^{\infty}\min\{\|f\|_{2,n},\ID_n \Delta(k)\}.\label{definition_v}
\end{equation}
Clearly, $V_n$ satisfies a triangle inequality. Therefore, $V_n(f-g)$ is a distance between $f,g\in \sF$. We are now able to state our main result. The weak convergence takes place in the normed space
\begin{equation}
    \ell^{\infty}(\sF) = \{\G:\sF \to \R \,|\, \|\G\|_{\infty} := \sup_{f\in\sF}|\G(f)| < \infty\},\label{definition_linf}
\end{equation}
cf. \cite{Vaart98} for a detailed discussion of this space.

\begin{thm}\label{cor_functional_central_limit_theorem}
    Let $\sF$ satisfy Assumptions \ref{ass1_vorher}, \ref{ass_clt_process}, \ref{ass_clt_fcont} and \ref{ass_clt_d}. Assume that
    \[
        \sup_{n\in\N}\int_0^{1}\sqrt{\IH(\varepsilon,\sF,V_n)} d \varepsilon < \infty.
    \]
    Then it holds in $\ell^{\infty}(\sF)$ that
    \[
        \big[\G_n(f)\big]_{f\in\sF} \dto \big[\G(f)\big]_{f\in\sF},
    \]
    where $(\G(f))_{f\in\sF}$ is a centered Gaussian process with covariances
    \[
        \Cov(\G(f),\G(g)) = \lim_{n\to\infty}\Cov(\G_n(f), \G_n(g)).
    \]
\end{thm}

The proof of Theorem \ref{cor_functional_central_limit_theorem} consists of two ingredients, convergence of the finite-dimensional distributions (cf. Theorem \ref{theorem_clt_mult}) and asymptotic tightness (cf. Corollary \ref{cor_equicont}). The more challenging part is asymptotic tightness; its proof only relies on Assumption \ref{ass1_vorher} and consists of a new maximal inequality presented in Theorem \ref{proposition_rosenthal_bound} which may be of independent interest. To ensure convergence of the finite-dimensional distributions, we have to formalize local stationarity (Assumption \ref{ass_clt_process}) and pose conditions in time direction on $\bar f(z,\cdot)$ (cf. Assumption \ref{ass_clt_fcont}) and $D_{f,n}(\cdot)$ (cf. Assumption \ref{ass_clt_d}) which is done in Section \ref{sec_clt}. In particular, it is needed that $D_{f,n}(u)$ is properly normalized.

Let us note that in the case that $X_i$ is stationary, $\bar f(z,u) = \bar f(z)$ and $D_{f,n}(u) = 1$, Assumptions \ref{ass_clt_process}, \ref{ass_clt_fcont} and \ref{ass_clt_d} are directly fulfilled. That is, in the stationary case, Assumption \ref{ass1_vorher} is sufficient for Theorem \ref{cor_functional_central_limit_theorem}. We formulate this finding as a simple corollary. Let 
\[
    \tilde\G_n(h) := \frac{1}{\sqrt{n}}\sum_{i=1}^{n}\big\{h(X_i) - \IE h(X_i)\big\},
\]
where $X_i = J(\sG_i)$, $i=1,...,n$, is a stationary process and $h$ are functions from
\[
    \sH \subset \{h:\R^d \to \R \text{ measurable}\}
\]
with the property that for all $x,y\in\R^d$, $|h(x) - h(y)| \le L_{\sH}|x-y|_{\infty}^s$.

\begin{cor}\label{cor_functional_central_limit_theorem_cor} Suppose that $\|X_1\|_{2s} < \infty$. Let $\Delta(k) := 2d L_{\sH}\delta_{2s}^{X}(k)^s$ and $\ID_n := 1$. Assume that
\begin{equation}
    \sup_{n\in\N}\int_0^{1}\sqrt{\IH(\varepsilon,\sH,V_n)} d \varepsilon < \infty.\label{depmeas_entropycondition}
\end{equation}
    Then it holds in $\ell^{\infty}(\sH)$ that
    \[
        \big[\tilde\G_n(h)\big]_{h\in\sH} \dto \big[\tilde\G(h)\big]_{h\in\sH}
    \]
    where $(\tilde\G(h))_{h\in\sH}$ is a centered Gaussian process with covariances
    \[
        \Cov(\tilde\G(h_1),\tilde\G(h_2)) = \sum_{k\in\Z}\Cov(h_1(X_0), h_2(X_k)).
    \]
\end{cor}

\subsection{Form of $V_n$ and discussion on $\Delta(k)$}

\subsubsection{Form of $V_n$}
Suppose that $\ID_n \in (0,\infty)$ is independent of $n\in\N$. Based on decay rates of $\Delta(k)$, simpler forms of $V_n$ can be derived and are given in Table \ref{table_v_values}. These results are elementary and are proved in Lemmas \ref{lemma_form_v2}, \ref{cor_integralcondition_explicit} in the Supplementary Material.

\renewcommand{\arraystretch}{2}

\begin{table}[h!]
    \centering
        \begin{tabular}{l|l|l}
         & \multicolumn{2}{c}{$\Delta(j)$ }  \\
         &  $c j^{-\alpha}$, $\alpha > 1, c > 0$         & $c \rho^j$, $\rho \in (0,1)$, $c > 0$    \\
         \hline
         \hline
        $V_n(f)$ & $\|f\|_{2,n}\max\{\|f\|_{2,n}^{-\frac{1}{\alpha}},1\}$ & $\|f\|_{2,n}\max\{\log(\norm{f}_{2,n}^{-1}),1\}$ \\ \hline
        $\int_0^\sigma \sqrt{\IH(\eps,\sF,V_n)}d\eps$ & $\int_0^{\tilde \sigma} \eps^{-\frac{1}{\alpha}}\sqrt{\IH(\eps,\sF, \|\cdot\|_{2,n})}d\eps$ & $\int_0^{\tilde\sigma} \log(\eps^{-1})\sqrt{\IH(\eps,\sF, \|\cdot\|_{2,n})}d\eps$
        \end{tabular}
        
        \caption{Equivalent expressions of $V_n$ and the corresponding entropy integral under the condition that $\ID_n \in (0,\infty)$ is independent of $n$. We omitted the lower and upper bound constants which are only depending on $c, \rho, \alpha$ and $\ID_n$. Furthermore, $\tilde \sigma = \tilde \sigma (\sigma)$ fulfills $\tilde \sigma \to 0$ for $\sigma \to 0$. }
        \label{table_v_values}
    \end{table}
    
If $f(Z_i,\frac{i}{n})$, $i=1,...,n$, are independent, we can choose $\Delta(k)=0$ for $k \ge 1$ and thus $V_n(f)$ is proportional to $\|f\|_{2,n}$. We therefore exactly recover the case of independent variables with our theory.

\subsubsection{Discussion on $\Delta(k)$}
Assumption \ref{ass1_vorher} asks $\Delta(k)$ to upper bound
\[
    \sum_{j=0}^{k}L_{\sF,j}(\delta_{\frac{2sp}{p-1}}^X(k-j))^{s},
\]
which is a convolution of the uniform H\"older constants $L_{\sF,j}$ of $f \in \sF$ and the dependence measure $\delta_{\frac{2sp}{p-1}}^X(k)$ of $X$. Therefore, the specific form of $f\in \sF$ has an impact on the dependence structure which is then introduced in $V_n$. This is contrary to other typical chaining approaches for Harris-recurrent Markov chains or $\beta$-mixing sequences where the dependence structure of $X_i$ simply transfers to functions $f(X_i)$ without further conditions.

Furthermore, contrary to other chaining approaches, we \emph{have to} ask for the existence of moments of $X_i$ in Assumption \ref{ass1_vorher} even though $\G_n(f)$ only involves $f(X_i)$. This is due to the linear nature of the functional dependence measure \reff{definition_uniform_functional_dependence_measure}. If $f$ is Lipschitz continuous with respect to its first argument ($s=1$ in Assumption \ref{ass1_vorher}), we have to impose $\sup_{i,j}\|X_{ij}\|_2 < \infty$. However, these moment assumptions can be relaxed at the cost of larger $\Delta(k)$ as follows. Let us consider the special case that $f(Z_i,\frac{i}{n})$ only depends on $X_i, \frac{i}{n}$, that is, $f(z,u) = f(z_0,u)$. If $f$ is bounded and Lipschitz continuous with respect to its first argument with Lipschitz constant $L$, for any $s\in (0,1]$,
\[
    |f(z_0,u) - f(z_0',u)| \le \min\{2\|f\|_{\infty}, L|z_0 - z_0'|\} \le (2\|f\|_{\infty})^{1-s} L^s |z_0 - z_0'|^s.
\]
Thus, $\Delta(k)$ can be chosen proportional to $\delta_{2s}^X(k)^s$. This means that we can reduce the moment assumption to $\sup_{i,j}\|X_{ij}\|_{2s} < \infty$ at the cost of having a larger norm $V_n$.

\subsection{Comparison to empirical process theory with $\beta$-mixing}
\label{sec_discussion_mixing}

In this section, we compare our functional central limit theorem for stationary processes from Corollary \ref{cor_functional_central_limit_theorem_cor} under functional dependence with similar results obtained under $\beta$-mixing. Unfortunately, we were not able to find a general setting under which the functional dependence measure $\delta_2^{X}$ can be compared with the $\beta$-mixing coefficients $\beta^{X}$ of $X_i$, $i=1,...,n$. However, in some special cases, both quantities can be upper bounded.

\subsubsection{Upper bounds for dependence coefficients of linear processes}
Consider the linear process
\[
    X_i = \sum_{k=0}^{\infty}a_k \varepsilon_{i-k},\quad i = 1,...,n,
\]
with an absolutely summable sequence $a_k$, $k\in\N_0$, and i.i.d. $\varepsilon_k$, $k\in\Z$, with $\IE \varepsilon_1=0$. Then it is immediate that
\[
    \delta_2^{X}(k) \le 2|a_k|\cdot \|\varepsilon_1\|_2.
\]
From \cite{pham_tran} (cf. also \cite{doukhan94}, Section 2.3.1), we have the following result. If for some $\nu \ge 1$,  $\|\varepsilon_1\|_{\nu} < \infty$, $\varepsilon_1$ has a Lipschitz-continuous Lebesgue-density and the process $X_i$ is invertible, then for some constant $\zeta > 0$,
\[
    \beta^{X}(k) \le \zeta \cdot \big(\sum_{m=k}^{\infty}|A_{m,\nu}|^{\frac{1}{1+\nu}}\big) \vee \big(\sum_{m=k}^{\infty}L(A_{m,2})\big),
\]
where $A_{m,s} := \sum_{k=m}^{\infty}|a_k|^{s}$ and $L(u) = \sqrt{u(1 \vee |\log(u)|)}$. If $a_k = O(k^{-\alpha})$ for some $\alpha > 1$,
\begin{equation}
    \delta_2^{X}(k) = O(k^{-\alpha}), \quad\quad \beta^{X}(k) = O\big(k^{-\alpha + \frac{1+\alpha}{1+\nu}+1} \vee (k^{-\alpha+\frac{3}{2}}\log(k)^{1/2})\big).\label{mixing_linearprocess}
\end{equation}
Note that even for this specific example, the calculation of the functional dependence measure is much easier and possible with much less assumptions. Moreover, bounds for $\beta^{X}(k)$ is typically larger than $\delta_2^{X}(k)$. The reason being the simple structure of $\delta_2^X$ compared to the much more involved formulation of dependence through sigma-algebras in the $\beta$-mixing coefficients. For recursively defined processes with a finite number of lags, $\delta_2^X$ are  typically upper bounded by geometric decaying coefficients (cf. \cite{wu2011}, \cite{dahlhaus2019}); the same holds true for $\beta^{X}(k)$ under additional continuity assumptions (cf. \cite{doukhan94}, Section 2.4. or \cite{kulik}, \cite{heinrich} among others). 

\subsubsection{Entropy integral}
In \cite{doukhan1995} (cf. also \cite{Dedeck02}), it was shown that if $X_i$, $i=1,...,n$, is stationary and $\beta$-mixing with coefficients $\beta(k)$, $k\in\N_0$, then
\begin{equation}
    \int_0^{1}\sqrt{\IH(\varepsilon,\sH, \|\cdot\|_{2,\beta})} d \varepsilon < \infty\label{mixing_entropycondition}
\end{equation}
implies weak convergence of $(\G_n(h))_{h\in \sH}$ in $\ell^{\infty}(\sH)$. Here, the $\|\cdot\|_{2,\beta}$-norm is defined as follows. If $\beta^{-1}$ denotes the inverse cadlag of the decreasing function $t \mapsto \beta(\lfloor t\rfloor)$ and $Q_h$ the inverse cadlag of the tail function $t \mapsto \IP(h(X_1) > t)$, then
\[
    \|h\|_{2,\beta} := \int_0^{1}\beta^{-1}(u) Q_h(u)^2 du.
\]
Condition \reff{mixing_entropycondition} was later relaxed in \cite[Theorem 8.3]{rio17}. It could be shown that if $\sF$ consists of indicator functions of specific classes of sets (in particular, $\sF$ corresponds to the empirical distribution function), weak convergence can be obtained under less restrictive conditions than \reff{mixing_entropycondition}. Since our theory does not directly allow us to analyze indicator functions because $\sF$ has to be a $(L_{\sF},s,R,C)$-class, we do not discuss their generalization here in detail.

In the special cases of polynomial and geometric decay, simple upper bounds for $\|h\|_{2,\beta}$ are available (cf. \cite{Dedeck02}). If $\sum_{k=0}^{\infty}k^{b-1}\beta(k) < \infty$ for some $b \ge 1$, then $\|\cdot \|_{2,\beta}$ is upper bounded by $\|\cdot \|_{\frac{2b}{b-1}}$.

Generally speaking, \reff{mixing_entropycondition} asks for $\frac{2b}{b-1}$ moments of the process $f(X_i)$ to exist while our condition in \reff{depmeas_entropycondition} only asks for $2$ moments of $f(X_i)$ but allows for smaller function classes through the additional factors given in the entropy integral (cf. Table \ref{table_v_values}). In specific examples (cf. \reff{mixing_linearprocess}) it may occur that the entropy integral \reff{depmeas_entropycondition} is finite while \reff{mixing_entropycondition} is infinite due to missing summability of $\beta^{X}(k)$.

To give a precise comparison, consider the situation of linear processes from \reff{mixing_linearprocess}. If $\nu >  2\alpha+1$, we can choose $b = \alpha-\frac{3}{2}$. Then, the two entropy integrals from Corollary \ref{cor_functional_central_limit_theorem_cor} (left) and \reff{mixing_entropycondition} read
\[
    \int_{0}^{1}\varepsilon^{-\frac{1}{\alpha}}\sqrt{\IH(\varepsilon,\sF, \|\cdot\|_{2})} d \varepsilon \quad\quad\text{vs.}\quad\quad \int_0^{1}\sqrt{\IH(\varepsilon,\sF, \|\cdot\|_{\frac{4\alpha-6}{2\alpha-5}})} d \varepsilon.
\]
Here, the entropy integral for mixing only exists if $\alpha > \frac{5}{2}$. The difference in the behavior is due to different bounds used for the variance of $\G_n(f)$.

\subsection{Integration into other empirical process results for the empirical distribution function of dependent data}

While our approach does allow for a general empirical process theory for H\"older continuous function classes, some more general dependence concepts already have been introduced (only) for the discussion of the empirical distribution function (EDF) based on the one-dimensional class $\sF = \{x \mapsto \Ii_{\{x \le t\}}:t \in \R\}$. We mention \cite{dehling2001}, \cite{dedecker10}, \cite{berkes09} and \cite{durieu14}. The conditions therein cover the case where $X_i = J(\sG_i)$ is a stationary Bernoulli shift with $\sG_i = (\varepsilon_i,\varepsilon_{i-1},...)$ and dependence is measured with the (stationary) functional dependence measure
\[
    \delta_{\nu}^{X}(k) = \|X_{i} - X_{i}^{*(i-k)}\|_{\nu}
\]
and its summed up version, $D_{\nu}(k) := \sum_{j=k+1}^{\infty}\delta_{\nu}^{X}(j)$. \cite{dehling2001} introduces so-called $\nu$-approximation coefficients $a_{k}$, $k\in\N$, which can be viewed as another formulation of functional dependence. However, their final result \cite[Theorem 5]{dehling2001} for the convergence of the EDF is stated with summability conditions \emph{both} on $a_k$ and absolutely regular mixing coefficients, we therefore do not discuss it in detail here. On the other hand, \cite[Theorem 2.1]{dedecker10} in combination with \cite[Section 6.1]{dedecker07} show convergence of the EDF if for some $\nu >1$ and $\gamma > 0$,
\[
    D_{\nu}(k) = O(k^{-\frac{(1+\gamma)(\nu+1)}{\nu}}).
\]
This is done by introducing simplified $\beta$-mixing coefficients which can then be upper bounded by $D_{\nu}(k)$. By using independent approximations of the original process, \cite[Theorem 1, Corollary 1]{berkes09} obtain convergence of the EDF if for some $\nu \ge 1$ and $A > 4$, $(D_{\nu}(k)k^{A})^{\nu} \le k^{-A}$, or equivalently,
\[
    D_{\nu}(k) \le k^{-\frac{A(\nu+1)}{\nu}}.
\]
\cite{durieu14} discusses convergence of the EDF under a general growth condition imposed on the moments of $\sum_{i=1}^{n}\{h(X_i) - \IE h(X_i)\}$ where $h \in \sH_{\alpha}$, the set of all H\"older-continuous functions with exponent $\alpha \in (0,1]$. Their condition is fulfilled if
\[
    \sum_{k=1}^{\infty}k^{2p-2}D_{\nu}(k)^{\alpha} < \infty
\]
for some $p > \frac{\nu}{\nu-1}$, $\nu > 1$.

\section{A general central limit theorem for locally stationary processes}
\label{sec_clt}

In this section, we introduce the remaining assumptions needed in Theorem \ref{cor_functional_central_limit_theorem} which pose regularity conditions on the process $X_i$ and the function class $\sF$ in time direction. They are used to derive a multivariate central limit theorem for $(\G_n(f_1),...,\G_n(f_k))$ under minimal moment conditions in Theorem \ref{theorem_clt_mult}. Comparable results in different and more specific contexts were shown in \cite{dahlhaus2019} or \cite{truquet}.

We first formalize the property of $X_i$ to be locally stationary (cf. \cite{dahlhaus2019}). We ask that for each $u\in[0,1]$ there exists a stationary process $\tilde X_i(u)$, $i=1,...,n$, such that $X_i \approx \tilde X_i(u)$ if $|u - \frac{i}{n}|$ is small. Recall $R(\cdot),s,p$ from Assumption \ref{ass1_vorher}.

\begin{assum}\label{ass_clt_process}
    For each $u\in[0,1]$, there exists a process $\tilde X_i(u) = J(\sG_i,u)$, $i\in\Z$, where $J$ is a measurable function. Furthermore, there exists some $C_X > 0$, $\varsigma \in (0,1]$ such that for every $i\in \{1,...,n\}$, $u_1,u_2\in [0,1]$,
    \[
        \|X_i - \tilde X_i(\frac{i}{n})\|_{\frac{2sp}{p-1}} \le C_X n^{-\varsigma}, \quad\quad \|\tilde X_i(u_1) - \tilde X_i(u_2)\|_{\frac{2sp}{p-1}} \le C_X|u_1-u_2|^{\varsigma}.
    \]
    For $\tilde Z_i(u) = (\tilde X_{i}(u), \tilde X_{i-1}(u),...)$ it holds that $\sup_{v,u}\|R(\tilde Z_0(v),u)\|_{2p} < \infty$.
\end{assum}

The behavior of the functions $f(z,u) = D_{f,n}(u)\cdot \bar f(z,u)$ of the class $\sF$ in the direction of time $u\in[0,1]$ is controlled by the following two continuity assumptions which state conditions on $\bar f(z,\cdot)$ and $D_{f,n}(\cdot)$ separately. 

\begin{assum}\label{ass_clt_fcont}
    There exists some $\varsigma \in (0,1]$ such that for every $\bar f\in \bar\sF$,
    \[
        \sup_{v,u_1,u_2}\Big\|\frac{|\bar f(\tilde Z_0(v),u_1) - \bar f(\tilde Z_0(v),u_2)|}{|u_1 - u_2|^{\varsigma}}\Big\|_2 < \infty.
    \]
\end{assum}

For $f \in \sF$, let $D^{\infty}_{f,n} := \sup_{i=1,...,n}D_{f,n}(\frac{i}{n})$.

\begin{assum}\label{ass_clt_d}
    For all $f\in \sF$, the function $\frac{D_{f,n}(\cdot)}{D_{f,n}^{\infty}}$ has bounded variation uniformly in $n$, and
    \begin{equation}
       \sup_{n\in\N}\frac{1}{n}\sum_{i=1}^{n}D_{f,n}(\frac{i}{n})^2 < \infty, \quad\quad \frac{D_{f,n}^{\infty}}{\sqrt{n}} \to 0.\label{ass_clt_d_tech_cond1}
    \end{equation}
    One of the two following cases hold.
    \begin{enumerate}
        \item Case $\IK=1$ (global): For all $f,g\in \sF$,  $u \mapsto \IE[\IE[\bar f(\tilde Z_{j_1}(u),u)|\sG_0]\cdot \IE[\bar g(\tilde Z_{j_2}(u),u)|\sG_0]]$ has bounded variation for all $j_1,j_2 \in\N_0$ and the following limit exists:
        \[
            \Sigma_{fg}^{(1)} := \lim_{n\to\infty}\int_0^{1}D_{f,n}(u)D_{g,n}(u) \cdot \sum_{j\in\Z}\text{Cov}(\bar f(\tilde Z_0(u), u), \bar g(\tilde Z_j(u),u)) du.
        \]
        \item Case $\IK=2$ (local): There exists a sequence $h_n > 0$ and $v\in [0,1]$ such that $\mathrm{supp} D_{f,n}(\cdot) \subset [v-h_n,v+h_n]$. It holds that
        \[
            h_n \to 0,\quad\quad \sup_{n\in\N}(h_n^{1/2}\cdot D_{f,n}^{\infty}) < \infty.
        \]
        The following limit exists for all $f,g\in \sF$:
        \[
            \Sigma_{fg}^{(2)} := \lim_{n\to\infty}\int_0^{1}D_{f,n}(u) D_{g,n}(u) du \cdot \sum_{j\in\Z}\text{Cov}(\bar f(\tilde Z_0(v), v), \bar g(\tilde Z_j(v),v)).
        \]
    \end{enumerate}
\end{assum}

Assumption \ref{ass_clt_d} looks rather technical. The first part including \reff{ass_clt_d_tech_cond1} guarantees the right normalization of $D_{f,n}(\cdot)$. The second part ensures the convergence of the asymptotic variances $\Var(\G_n(f))$ and covariances $\Cov(\G_n(f),\G_n(g))$.

We obtain the following central limit theorem.

\begin{thm}\label{theorem_clt_mult}
    Let $\sF$ satisfy Assumptions \ref{ass1_vorher}, \ref{ass_clt_process}, \ref{ass_clt_fcont} and  \ref{ass_clt_d}. Let $m\in\N$ and $f_1,...,f_m \in \sF$.  Let $\Sigma^{(\IK)} = (\Sigma^{(\IK)}_{f_kf_l})_{k,l=1,...,m}$. Then
    \[
        \frac{1}{\sqrt{n}}\sum_{i=1}^{n}\Big\{\begin{pmatrix}
            f_1(Z_i,\frac{i}{n})\\
            \vdots\\
            f_m(Z_i,\frac{i}{n})
        \end{pmatrix}- \E \begin{pmatrix}
            f_1(Z_i,\frac{i}{n})\\
            \vdots\\
            f_m(Z_i,\frac{i}{n})
        \end{pmatrix}\Big\} \dto N(0,\Sigma^{(\IK)}).
    \]
\end{thm}

Theorem \ref{theorem_clt_mult} generalizes the one-dimensional central limit theorem from \cite{dahlhaus2019}. We now comment on the assumptions. 
\begin{rem}
    Assumptions \ref{ass_clt_process}, \ref{ass_clt_fcont} and \ref{ass_clt_d} allow for very general structures of $f\in \sF$. However, in many special cases, a subset of them is  automatically fulfilled:
    \begin{itemize}
        \item If $X_i$ is stationary, then Assumption \ref{ass1_vorher} already implies Assumption \ref{ass_clt_process}.
        \item If $\bar f(z,u) = \bar f(z)$ does not depend on $u$, Assumption \ref{ass_clt_fcont} is fulfilled.
    \end{itemize}
    Regarding Assumption \ref{ass_clt_d} we have:
    \begin{itemize}
        \item If $D_{f,n}(u) = 1$, $X_i$ is stationary and $\bar f(z,u) = \bar f(z)$, then Assumption \ref{ass1_vorher} already implies Assumption \ref{ass_clt_d}(i) with $\Sigma_{fg}^{(1)} = \sum_{j\in\Z}\Cov(\bar f(Z_0),\bar f(Z_j))$.
        \item If $D_{f,n}(u) = 1$ and Assumption \ref{ass1_vorher}, \ref{ass_clt_fcont} hold with $s = \varsigma =1$, then Assumption \ref{ass_clt_d}(i) holds with $\Sigma_{fg}^{(1)} = \int_0^{1}\sum_{j\in\Z}\Cov(\bar f(\tilde Z_0(u),u),\bar f(\tilde Z_j(u),u)) du$.
        \item If $h_n \to 0$, $nh_n \to \infty$ and $D_{f,n}(u) = \frac{1}{\sqrt{h_n}}K(\frac{u-v}{h_n})$ with some Lipschitz-continuous kernel $K:\R\to\R$ with support $\subset[-1,1]$ and fixed $v\in (0,1)$, then Assumption \ref{ass_clt_d}(ii) holds with $\Sigma_{fg}^{(2)} = \int_0^{1} K(x)^2 dx\cdot \sum_{j\in\Z}\Cov(\bar f(\tilde Z_0(v),v),\bar f(\tilde Z_j(v),v))$.
    \end{itemize}
\end{rem}

\section{Maximal inequalities and asymptotic tightness under functional dependence}
\label{sec_max_asymptotic_tightness}

In this section, we provide the necessary ingredients for the proof of asymptotic tightness of $\G_n(f)$. We derive a new maximal inequality for finite $\sF$ under functional dependence in Theorem \ref{proposition_rosenthal_bound}. We then generalize this bound to arbitrary $\sF$ using chaining techniques in Section \ref{sec_asymptotic_tightness}.

\subsection{Maximal inequalities}
\label{sec_maximal}

We first derive a maximal inequality which is a main ingredient for chaining devices but also is of independent interest. To state the result, let
\[
    \beta(q) := \sum_{j=q}^{\infty}\Delta(k).
\]
and define
\[
    q^{*}(x) := \min\{q\in\N:\beta(q) \le q\cdot x\}.
\]
Set $D_{n}^{\infty}(u) := \sup_{f\in \sF}|D_{f,n}(u)|$. For $\nu \ge 2$, choose $\ID_{\nu, n}^{\infty}$ such that
\begin{equation}
   \Big(\frac{1}{n}\sum_{i=1}^{n}D_{n}^{\infty}(\frac{i}{n})^{\nu}\Big)^{1/\nu} \le \ID_{\nu,n}^{\infty}.\label{definition_dinf}
\end{equation}
Put $\ID_n^{\infty} = \ID_{2,n}^{\infty}$. Recall that $H = H(|\sF|) = 1 \vee \log |\sF|$ as in \reff{H_fix}. 

\begin{thm}\label{proposition_rosenthal_bound}
    Suppose that $\sF$ satisfies $|\sF| < \infty$ and Assumption \ref{ass1_vorher}. Then there exists some universal constant $c > 0$ such that the following holds: If $\sup_{f\in\sF}\|f\|_{\infty}\le M$ and $\sup_{f\in\sF}V_n(f)\le \sigma$, then
    \begin{equation}
    \E \max_{f\in \sF}\big|\G_n(f)\big| \le c \cdot \min_{q\in \{1,...,n\}} \Big[\sigma \sqrt{H} + \sqrt{H}\cdot\ID_{n}^{\infty}\beta(q) +\frac{q M H}{\sqrt{n}}\Big]\label{proposition_rosenthal_bound_result1}
\end{equation}
and
\begin{equation}
    \E \max_{f\in \sF}\big|\G_n(f)\big| \le 2c\cdot \Big(\sigma \sqrt{H} +  q^{*}\big(\frac{M\sqrt{H}}{\sqrt{n}\ID_{n}^{\infty}}\big)\frac{MH}{\sqrt{n}}\Big).\label{proposition_rosenthal_bound_result2}
\end{equation}
\end{thm}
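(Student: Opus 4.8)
The plan is to start from the decomposition \reff{rosenthal_main_decomposition}, which splits $\max_{f\in\sF}|\frac{1}{\sqrt n}S_n^W(f)|$ into three groups: the far-tail remainder $\frac{1}{\sqrt n}(S_n^W(f)-S_{n,q}^W(f))$, the dyadic block sums built from $T_{i,l}(f)$, and the base term $\frac{1}{\sqrt n}S_{n,1}^W(f)$. I would treat the far-tail remainder by a Banach-space martingale inequality and the remaining two groups, each of which reduces to a sum of independent blocks after the even/odd splitting, by a Rosenthal/Bernstein-type maximal inequality over the finite class $\sF$. In every estimate the finiteness of $\sF$ enters only through a factor $\sqrt H$ (respectively $H$), coming from the geometry of $\ell^\infty(\sF)\cong\ell^\infty_{|\sF|}$.

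\emph{Far-tail remainder.} I would view $(\frac{1}{\sqrt n}(S_n^W(f)-S_{n,q}^W(f)))_{f\in\sF}$ as a martingale taking values in $\R^{|\sF|}$ and bound its expected sup-norm by \cite{pinelis1994}. Since $\ell^\infty_{|\sF|}$ is not $2$-smooth, I would run the inequality in the equivalent norm $\|\cdot\|_p$ with $p\asymp\log|\sF|=H$; this norm is $(2,\sqrt{p-1})$-smooth and satisfies $\|\cdot\|_\infty\le\|\cdot\|_p\le e\,\|\cdot\|_\infty$, so Pinelis' bound produces the factor $\sqrt H$. The martingale increments are governed by the uniform dependence estimate $\|\sup_f|W_i(f)-W_i(f)^{*(i-j)}|\|_2\le D_n^{\infty}(\frac in)\Delta(j)$ from Assumption \ref{ass_general}; because $S_n^W-S_{n,q}^W$ retains only lags larger than $q$, summing these contributions yields $\sum_{j>q}\Delta(j)=\beta(q+1)\le\beta(q)$ at the level of $L^2$-norms, and after averaging the $D_n^{\infty}(\frac in)^2$ one is left with the bound $\sqrt H\cdot\ID_n^{\infty}\beta(q)$ for this group.

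\emph{Dyadic and base terms.} For each $l$, the blocks $T_{i,l}(f)$ of a fixed parity are independent, so each parity sum is a sum of independent, centred, uniformly bounded $\R^{|\sF|}$-valued summands. I would apply the maximal inequality for independent summands (Bernstein combined with a union bound over $\sF$), yielding a sub-Gaussian part $\lesssim\sqrt H\,\sigma_l$ and a boundedness part $\lesssim\frac{H}{\sqrt{n/\tau_l}}M_l$, where $\sigma_l$ is a uniform bound on the standard deviation of the normalised parity sum and $M_l$ bounds $\|\frac{1}{\sqrt{\tau_l}}T_{i,l}(f)\|_\infty$. Since a block has at most $\tau_l$ summands, each bounded by $2M$, one has $M_l\le2M\sqrt{\tau_l}$, hence $\frac{H}{\sqrt{n/\tau_l}}M_l\le\frac{2M\tau_l H}{\sqrt n}$; summing the geometric series $\sum_{l=1}^L\tau_l\le2\cdot2^L\le2q$ and adding the base term ($\tau=1$) gives the boundedness contribution $\lesssim\frac{qMH}{\sqrt n}$. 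For the variances I would use the martingale-projection inequality \reff{basic_bound_projection}: the increment $W_{k,\tau_l}(f)-W_{k,\tau_{l-1}}(f)$ involves only lags in $(\tau_{l-1},\tau_l]$, so $\sup_f\sigma_l\le\sum_{j=\tau_{l-1}+1}^{\tau_l}\min\{\sigma',\ID_n\Delta(j)\}$ with $\sigma':=\sup_{f\in\sF}\|f\|_{2,n}$. Summing over $l$ and the base term telescopes the lag ranges into $\sum_{j\ge1}\min\{\sigma',\ID_n\Delta(j)\}$, which is exactly the non-leading part of $V$ evaluated at $\|f\|_{2,n}=\sigma'$; since the maximiser $f'$ of $\|\cdot\|_{2,n}$ satisfies $V(f')\le\sup_f V(f)\le\sigma$, this sum is $\le\sigma$, so the total sub-Gaussian contribution is $\lesssim\sigma\sqrt H$.

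\emph{Combination, optimisation, and main obstacle.} Adding the three groups and minimising over $q\in\{1,\dots,n\}$ yields \reff{proposition_rosenthal_bound_result1}. For \reff{proposition_rosenthal_bound_result2} I would balance the two $q$-dependent terms $\sqrt H\,\ID_n^{\infty}\beta(q)$ and $\frac{qMH}{\sqrt n}$: with $x:=\frac{M\sqrt H}{\sqrt n\,\ID_n^{\infty}}$ the choice $q=q^{*}(x)=\min\{q:\beta(q)\le qx\}$ forces $\sqrt H\,\ID_n^{\infty}\beta(q^{*})\le\sqrt H\,\ID_n^{\infty}q^{*}x=q^{*}\frac{MH}{\sqrt n}$, so both $q$-dependent terms are at most $q^{*}(x)\frac{MH}{\sqrt n}$ and their sum is absorbed into the factor $2c$. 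The crux of the argument is the variance-summation step: one must verify that the dyadically grouped standard-deviation proxies $\sigma_l$ sum to $\sup_f V(f)$ rather than to something larger, which is exactly what the definition $V(f)=\|f\|_{2,n}+\sum_{k\ge1}\min\{\|f\|_{2,n},\ID_n\Delta(k)\}$ is engineered for; the delicate point is to control the entire dyadic sum through a single near-maximiser of $\|\cdot\|_{2,n}$ instead of term-by-term suprema. A secondary technical point is the clean extraction of the $\sqrt H$ factor from the $\ell^\infty(\sF)$ geometry, both in Pinelis' inequality (via the $\ell^p$, $p\asymp H$, embedding) and in the Bernstein maximal inequality.
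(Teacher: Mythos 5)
Your route is the paper's own proof in all essentials: the same decomposition \reff{rosenthal_main_decomposition}; Pinelis' martingale inequality for the far-tail term run in the $\ell^p$-norm with $p\asymp\log|\sF|$ (the paper takes $s=2\vee\log|\sF|$ and uses $|x|_\infty\le|x|_s\le e\,|x|_\infty$, producing exactly your $\sqrt{H}\,\ID_n^{\infty}\beta(q)$); the Rosenthal/Bernstein-type maximal inequality for independent summands (the paper invokes inequality (4.3) of \cite{Dedeck02}) for the even/odd block sums and the base term; and the same telescoping of the per-octave variance proxies into $\bar V(\sup_{f}\|f\|_{2,n})=\max_{f}V(f)\le\sigma$ via monotonicity of $x\mapsto \bar V(x)$ --- your remark that the whole dyadic sum is controlled through the single $\|\cdot\|_{2,n}$-maximiser is precisely the paper's step \reff{proof_rosenthal_a2_eq3}. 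Your per-lag dependence bound $\Delta(j)$ is even marginally sharper than the paper's, which freezes the worst lag $\tau_{l-1}$ within each octave, obtains $\Delta(\lfloor j/2\rfloor)$, and reindexes via Lemma \ref{lemma_vfactor_negligible}; either variant yields \reff{proposition_rosenthal_bound_result1} with the boundedness part $\sum_{l}\tau_l\le 2q$ exactly as in \reff{proof_rosenthal_a2_eq2}.

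The one genuine gap is in your derivation of \reff{proposition_rosenthal_bound_result2}: you substitute $q=q^{*}(x)$, $x=\frac{M\sqrt{H}}{\sqrt{n}\,\ID_n^{\infty}}$, into the minimum over $q\in\{1,\dots,n\}$, but nothing guarantees $q^{*}(x)\le n$; for slowly decaying $\beta$ or small $n$ this choice is inadmissible and the balancing argument as written fails. The repair, which the paper carries out explicitly in \reff{proposition_rosenthal_bound_result2_eq1}--\reff{proposition_rosenthal_bound_result2_eq2}, is a case distinction: if $q^{*}(x)\frac{H}{n}\le 1$ then $q^{*}(x)\in\{1,\dots,n\}$ and your argument goes through verbatim, since $\beta(q^{*})\le q^{*}x$ turns the dependence term into $q^{*}\frac{MH}{\sqrt n}$; if $q^{*}(x)\frac{H}{n}>1$, one falls back on the trivial bound $\E\max_{f\in\sF}\big|\frac{1}{\sqrt n}S_n^{W}(f)\big|\le 2\sqrt{n}M$ (valid because $\|W_i(f)\|_{\infty}\le\|f\|_{\infty}\le M$ by Assumption \ref{ass_general}), which in this regime satisfies $2\sqrt{n}M\le 2q^{*}(x)\frac{MH}{\sqrt n}$. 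With that two-line patch your proposal is complete and coincides with the paper's proof.
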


Clearly, the second bound \reff{proposition_rosenthal_bound_result2} is a corollary of \reff{proposition_rosenthal_bound_result1} which balances the two terms which involve $q$. Values of $q^{*}(\cdot)$ for the two prominent cases that $\Delta(\cdot)$ is polynomial or exponential decaying can be found in Table \ref{table_r_values}. The proof of Theorem \ref{proposition_rosenthal_bound} relies on a decomposition of $\G_n(f)$ in i.i.d. parts and a residual term of martingale structure. Similar decompositions are also the core of empirical process results for Harris-recurrent Markov chains (cf. \cite{tjost16}) and mixing sequences (cf. \cite{Dedeck02}).

    \begin{table}[h!]
    \centering
        \begin{tabular}{l|l|l}
         & \multicolumn{2}{c}{$\Delta(j)$ }  \\
         &  $C j^{-\alpha}$, $\alpha > 1$         & $C \rho^j$, $\rho \in (0,1)$    \\ 
         \hline
         \hline
        $q^*(x)$ & $\max\{x^{-\frac{1}{\alpha}},1\}$ & $\max\{\log(x^{-1}),1\}$ \\ \hline
        $r(\delta)$ & $\min\{\delta^{\frac{\alpha}{\alpha-1}},\delta\}$ & $\min\{\frac{\delta}{\log(\delta^{-1})},\delta\}$
        \end{tabular}
        
        \caption{Equivalent expressions of $q^{*}(\cdot)$ and $r(\cdot)$ taken from Lemma \ref{r_d_values} in Section \ref{sec_V_imply_supp}. We omitted the lower and upper bound constants which are only depending on $C, \rho, \alpha$.}
        \label{table_r_values}
    \end{table}

In the next subsections, we will prove asymptotic tightness for $\G_n(f)$ under the condition that $\ID_n^{\infty}$, $\ID_n$ do not depend on $n$. However, uniform convergence rates of $\G_n(f)$ for finite $\sF$ (growing with $n$) can be obtained without these conditions but with additional moment assumptions, which is done in the following Corollary \ref{proposition_rosenthal_bound_rates}. To incorporate the additional moment assumptions, we use a slightly stronger assumption than Assumption \ref{ass1_vorher}.

\begin{assum}\label{ass1}
    $\bar\sF$ is a $(L_{\sF},s,R,C)$-class. There exists $\nu \ge 2$, $p\in (1,\infty]$, $C_X > 0$ such that
    \begin{equation}
        \sup_{i,u}\| R(Z_{i},u)\|_{\nu p} \le C_R, \quad\quad \sup_{i,j}\|X_{ij}\|_{\frac{\nu sp}{p-1}} \le C_X.\label{ass1_eq1}
    \end{equation}
    Let $\ID_n \ge 0$, $\Delta(k) \ge 0$ be such that for all $k\in\N_0$,
    \[
        2d C_R \cdot \sum_{j = 0}^kL_{\sF,j} (\delta_{\frac{\nu sp}{p-1}}^X(k-j))^s \le \Delta(k), \quad\quad \sup_{f\in \sF} \Big(\frac{1}{n}\sum_{i=1}^{n}\big|D_{f,n}(\frac{i}{n})\big|^{2}\Big)^{1/2} \le \ID_{n}.
    \]
\end{assum}

Note that Assumption \ref{ass1_vorher} is obtained by taking $\nu=2$. For $\delta > 0$, let
\[
    r(\delta) := \max\{r > 0: q^{*}(r)r \le \delta\},
\]
cf. Table \ref{table_r_values} for values of $r(\cdot)$ in special cases. 

\begin{cor}[Uniform convergence rates]\label{proposition_rosenthal_bound_rates}
    Suppose that $\sF$ satisfies $|\sF| < \infty$ and Assumption \ref{ass1} for some $\nu \ge 2$. Let $C_{\Delta} := 4d\cdot |L_{\sF}|_1\cdot C_X^{s}C_R + C_{\bar{f}}$. Furthermore, suppose that
    \begin{equation}
        \sup_{n\in\N}\sup_{f\in\sF}V_n(f) < \infty, \quad\quad \sup_{n\in\N}\frac{\ID_{\nu,n}^{\infty}}{\ID_{n}^{\infty}} < \infty,\quad\quad\quad \sup_{n\in\N}\frac{C_{\Delta} H}{n^{1-\frac{2}{\nu}}r(\frac{\sigma}{\ID_n^{\infty}})^2} < \infty.\label{proposition_rosenthal_bound_rates_cond1}
    \end{equation}
    Then,
    \[
        \max_{f\in\sF}|\G_n(f)| = O_p(\sqrt{H}).
    \]
\end{cor}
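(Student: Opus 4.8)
The plan is to reduce the statement to an expectation bound and then invoke the maximal inequality of Theorem \ref{proposition_rosenthal_bound}. By Markov's inequality, $\P(\max_{f\in\sF}|\G_n(f)|>K\sqrt H)\le (K\sqrt H)^{-1}\,\E\max_{f\in\sF}|\G_n(f)|$, so it suffices to prove $\E\max_{f\in\sF}|\G_n(f)|\le C\sqrt H$ with a constant $C$ that does not depend on $n$; the three conditions in \reff{proposition_rosenthal_bound_rates_cond1} are all uniform in $n$, which is exactly what makes such a uniform $C$ available. To bring Theorem \ref{proposition_rosenthal_bound} into play I set $W_i(f)=f(Z_i,\tfrac in)$, so that $\G_n(f)=\tfrac1{\sqrt n}S_n^{W}(f)$ and Assumption \ref{ass_general} holds verbatim by Lemma \ref{depend_trans}. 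The only missing hypothesis is boundedness: the theorem asks for $\sup_f\|f\|_\infty\le M<\infty$, whereas Assumption \ref{ass1} supplies only the $\nu$-th moment bound $\sup_i\|f(Z_i,u)\|_\nu\le|D_{f,n}(u)|\,C_\Delta$. The whole argument is therefore about trading boundedness for the available moments, which I would do by truncation.

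Fix a level $\bar M>0$ and clip, writing $f(Z_i,\tfrac in)=\phi_{\bar M}(f(Z_i,\tfrac in))+r_i(f)$ with $\phi_{\bar M}(x):=(x\wedge\bar M)\vee(-\bar M)$. Because $\phi_{\bar M}$ is a $1$-Lipschitz contraction, the clipped process $W_i^{(1)}(f):=\phi_{\bar M}(f(Z_i,\tfrac in))$ still satisfies every quantitative bound of Assumption \ref{ass_general}: functional-dependence-measure differences can only shrink under a contraction, and the $\|\cdot\|_s$-bound is immediate from $|\phi_{\bar M}(x)|\le|x|$. Monotonicity in $f$ is preserved as well, and by Lemma \ref{lemma_normproperties_v2}(ii) we have $V(\phi_{\bar M}(f))\le V(f)\le\sigma$, where $\sigma:=\sup_{n}\sup_{f\in\sF}V(f)<\infty$ by the first condition in \reff{proposition_rosenthal_bound_rates_cond1}. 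The map $\phi_{\bar M}$ is not additive, but additivity in $f$ enters only the bracketing and chaining arguments of later sections, not the fixed-class block decomposition \reff{rosenthal_main_decomposition} underlying Theorem \ref{proposition_rosenthal_bound}, which rests on the linearity of the conditioning operators $W\mapsto W_{i,j}$ alone. Applying \reff{proposition_rosenthal_bound_result2} to $W^{(1)}$ with $M=\bar M$ bounds its contribution by $2c\big(\sigma\sqrt H+q^{*}(\tfrac{\bar M\sqrt H}{\sqrt n\,\ID_n^\infty})\tfrac{\bar M H}{\sqrt n}\big)$.

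For the residual I would use only a moment estimate, requiring no dependence structure. With the envelope $G_i:=\sup_{f\in\sF}|f(Z_i,\tfrac in)|$ one has $\sup_f|r_i(f)|=(G_i-\bar M)_+\le G_i^{\nu}/\bar M^{\nu-1}$, and the envelope version of Lemma \ref{depend_trans} (which factors out the deterministic $D_n^\infty(\tfrac in)$ and controls $\sup_f|\bar f(Z_i,\tfrac in)|$ in $L^\nu$ by $C_\Delta$) gives $\|G_i\|_\nu\le D_n^\infty(\tfrac in)\,C_\Delta$. Hence
\[
\frac1{\sqrt n}\,\E\max_{f\in\sF}\Big|\sum_{i=1}^n\{r_i(f)-\E r_i(f)\}\Big|\le \frac{2}{\sqrt n}\sum_{i=1}^n\E(G_i-\bar M)_+\le \frac{2C_\Delta^{\nu}(\ID_{\nu,n}^\infty)^{\nu}\sqrt n}{\bar M^{\nu-1}},
\]
where the last step uses $\tfrac1n\sum_{i=1}^n D_n^\infty(\tfrac in)^\nu\le(\ID_{\nu,n}^\infty)^\nu$.

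It remains to choose $\bar M$. The first condition in \reff{proposition_rosenthal_bound_rates_cond1} makes $\sigma\sqrt H$ uniformly $O(\sqrt H)$; the second lets me replace $\ID_{\nu,n}^\infty$ by $\ID_n^\infty$, so that both the residual bound and the argument $\tfrac{\bar M\sqrt H}{\sqrt n\,\ID_n^\infty}$ of $q^{*}$ are expressed through the single scale $\ID_n^\infty$ and the ratio $\sigma/\ID_n^\infty$. Optimizing $\bar M$ then balances a penalty that grows in $\bar M$ through $q^{*}(\tfrac{\bar M\sqrt H}{\sqrt n\,\ID_n^\infty})\tfrac{\bar M H}{\sqrt n}$ against a residual decaying like $\bar M^{-(\nu-1)}$; carrying out this optimization, with $q^{*}$ evaluated through the tabulated rate function $r(\cdot)$, shows that the whole penalty is a fixed multiple of $\sqrt H$ precisely when the third condition in \reff{proposition_rosenthal_bound_rates_cond1} holds, the factor $n^{1-2/\nu}$ being the usual gain from having $\nu$ finite moments rather than a uniform bound. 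The main obstacle is exactly this last step: verifying that clipping leaves the hypotheses of Theorem \ref{proposition_rosenthal_bound} intact so that the theorem genuinely applies to $W^{(1)}$, and then solving the two-sided trade-off in $\bar M$ to recognize the third condition as its sharp form.
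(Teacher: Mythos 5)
Your proposal is correct and takes essentially the same route as the paper: the paper likewise truncates at the level $M_n=\tfrac{\sqrt n}{\sqrt H}\,r\big(\tfrac{\sigma Q^{1/2}}{\ID_n^{\infty}}\big)\ID_n^{\infty}$, applies Theorem \ref{proposition_rosenthal_bound} via \reff{proposition_rosenthal_bound_result2} to the truncated class $\{\varphi_{M_n}^{\wedge}(f):f\in\sF\}$ --- which satisfies Assumption \ref{ass1} by Lemma \ref{lemma_properties_truncationfunction}(iii), so your additivity concern dissolves once the clipping is read as a truncation of the function class rather than of the $W$-process --- and controls the tail through the envelope moment bound $\|\bar F(Z_i,u)\|_{\nu}\le C_{\Delta}$, with the third condition in \reff{proposition_rosenthal_bound_rates_cond1} playing exactly the balancing role you identify. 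The only cosmetic difference is that you bound $\E\max_{f\in\sF}|\G_n(f)|$ at a fixed truncation level and finish with Markov's inequality, whereas the paper bounds the probability directly with a $Q$-scaled truncation level, splitting off the event $\{\sup_i F(Z_i,\tfrac{i}{n})>M_n\}$ and the truncation bias as separate terms and letting $Q\to\infty$.
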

The first condition in \reff{proposition_rosenthal_bound_rates_cond1} guarantees that $\G_n(f)$ is properly normalized. The second and third condition are needed to prove that the ``rare events'', where  $|f(Z_i,\frac{i}{n})|$ exceeds some threshold $M_n\in (0,\infty)$, are of the same order as $\sqrt{H}$. For this, we may need more than two moments of $f(Z_i,\frac{i}{n})$, that is, $\nu > 2$, depending on $\sqrt{H}$ and the behavior of $\ID_n^{\infty}$.

Corollary \ref{proposition_rosenthal_bound_rates} can be used to prove (optimal) convergence rates for kernel density and regression estimators as well as maximum likelihood estimators under dependence. We give an example in Section \ref{sec_examples}.

\subsection{Asymptotic tightness}
\label{sec_asymptotic_tightness}

In this section, we extend the maximal inequality from Theorem \ref{proposition_rosenthal_bound} to arbitrary (infinite) classes $\sF$. Since Assumption \ref{ass1_vorher} forces $f\in \sF$ to be H\"older-continuous with respect to its first argument $z$, classical chaining approaches which use indicator functions do not apply here. We provide a new chaining technique which preserves continuity in Section \ref{sec_maximal_chaining}. 

For $n\in\N$, $\delta > 0$ and $k\in\N$ define $H(k) = 1 \vee \log(k)$ and 
\begin{equation}
    m(n,\delta,k) := r(\frac{\delta}{\ID_n})\cdot \frac{\ID_n^{\infty}n^{1/2}}{H(k)^{1/2}}.\label{lemma_chaining_definition_m}
\end{equation}

Here, $m(n,\delta,k)$ represents the threshold for rare events in the chaining procedure. We have the following result.

\begin{thm}\label{thm_equicont}
    Let $\sF$ satisfy Assumption \ref{ass1_vorher} and let $F$ be some envelope function of $\sF$, that is, for each $f\in \sF$ it holds that $|f| \le F$. Let $\sigma > 0$ and assume that $\sup_{f\in \sF}V_n(f) \le \sigma$. Then there exists some universal constant $\tilde c > 0$ such that 
    \begin{eqnarray*}
        &&\E \sup_{f\in \sF}\big|\G_n(f)\big|\\
        &\le& \tilde c \Big[ (1 + \frac{\ID_n^{\infty}}{\ID_n} + \frac{\ID_n}{\ID_n^{\infty}})\int_0^{\sigma} \sqrt{\IH\big(\eps,\sF,V_n\big)} \, \mathrm{d}\eps + \sqrt{n}\big\|F\Ii_{\{F > \frac{1}{4}m(n,\sigma,\N(\frac{\sigma}{2},\sF,V_n))\}}\big\|_{1,n}\Big].
    \end{eqnarray*}
\end{thm}

As a corollary, we obtain asymptotic equicontinuity of $\G_n(f)$. Here, we use Assumptions \ref{ass_clt_process} and \ref{ass_clt_fcont} only to discuss the remainder term in Theorem \ref{thm_equicont} without imposing the existence of additional moments.

\begin{cor}\label{cor_equicont}
    Let $\sF$ satisfy Assumption \ref{ass1_vorher}, \ref{ass_clt_process} and \ref{ass_clt_fcont}. Suppose that
    \begin{equation}
        \sup_{n\in\N}\int_0^{1}\sqrt{\IH(\varepsilon,\sF,V_n)} d\varepsilon < \infty.\label{cor_equicont_cond0}
    \end{equation}
    Furthermore, assume that $\ID_n,\ID_n^{\infty} \in (0,\infty)$ are independent of $n$, and
    \begin{equation}
        \sup_{i=1,...,n}\frac{D_{n}^{\infty}(\frac{i}{n})}{\sqrt{n}} \to 0.\label{cor_equicont_cond1}
    \end{equation}
    Then, the process $\G_n(f)$ is equicontinuous with respect to $V_n$, that is, for every $\eta > 0$,
    \[
        \lim_{\sigma \to 0}\limsup_{n \to \infty} \IP\Big( \sup_{f,g \in \sF, V_n(f-g) \leq \sigma} |\G_n(f) - \G_n(g)| \geq \eta \Big) = 0.
    \]
\end{cor}

\section{Applications}
\label{sec_examples}

In this section, we provide some applications of the main results (Corollary \ref{proposition_rosenthal_bound_rates} and Corollary \ref{cor_functional_central_limit_theorem}). We will focus on locally stationary processes and therefore use localization in our functionals, but the results also hold for stationary processes, accordingly. 

Let $K:\R\to\R$ be some bounded kernel function which is Lipschitz continuous with Lipschitz constant $L_K$, $\int K(u) du = 1$, $\int K(u)^2 du \in (0,\infty)$ and support $\subset[-\frac{1}{2},\frac{1}{2}]$. For some bandwidth $h > 0$, put $K_h(\cdot) := \frac{1}{h}K(\frac{\cdot}{h})$.

In the first example we consider the nonparametric kernel estimator in the context of nonparametric regression with fixed design and locally stationary noise. We show that under conditions on the bandwidth $h$, which are common in the presence of dependence (cf. \cite{hansen2008} or \cite{vogt2012}), we obtain the optimal uniform convergence rate $\sqrt{\frac{\log(n)}{nh}}$. Write $a_n \gtrsim b_n$ for sequences $a_n,b_n$ if there exists some constant $c > 0$ such that $a_n \ge c b_n$ for all $n\in\N$.

\begin{ex}[Nonparametric Regression]\label{example_regeression}
   Let $X_i$ be some arbitrary process of the form \reff{representation_x} with $\sum_{k=0}^{\infty}\delta^{X}_2(k) < \infty$ which fulfills $\sup_{i=1,...,n}\|X_i\|_{\nu} \le C_X \in (0,\infty)$ for some $\nu > 2$. Suppose that we observe $Y_i$, $i=1,...,n$ given by
   \[
        Y_i = g(\frac{i}{n}) + X_i,
   \]
   where $g:[0,1]\to \R$ is some function. Estimation of $g$ is performed via
   \[
        \hat g_{n,h}(v) := \frac{1}{n} \sum_{i = 1}^nK_h(\frac{i}{n}-v)Y_i.
   \]
   Suppose that either
   \begin{itemize}
       \item $\delta_2^{X}(j) \le \kappa j^{-\alpha}$ with some $\kappa > 0, \alpha > 1$, and $h \gtrsim  (\frac{\log(n)}{n^{1-\frac{2}{\nu}}})^\frac{\alpha-1}{\alpha}$, or
       \item $\delta_2^{X}(j) \le \kappa \rho^{j}$ with some $\kappa > 0, \rho \in (0,1)$ and $h \gtrsim \frac{\log(n)^3}{n^{1-\frac{2}{\nu}}}$.
   \end{itemize}
   From \reff{example_regeression_eq1} and \reff{example_regeression_eq2} below it follows that
   \[
        \sup_{v\in[0,1]}|\hat g_{n,h}(v) - \IE \hat g_{n,h}(v)| = O_p\big(\sqrt{\frac{\log(n)}{nh}}\big).
   \]
   First note that due to Lipschitz continuity of $K$ with Lipschitz constant $L_K$, we have
   \begin{eqnarray}
        &&\sup_{|v-v'|\le n^{-3}}\big|(\hat g_{n,h}(v) - \IE \hat g_{n,h}(v)) - (\hat g_{n,h}(v') - \IE \hat g_{n,h}(v'))\big|\nonumber\\
        &\le& \cdot \frac{L_K n^{-3}}{nh^2}\sum_{i=1}^{n}\big(|X_i| + \IE|X_i|\big) = O_p( n^{-1} ).\label{example_regeression_eq1}
   \end{eqnarray}
   For the grid $V_n = \{in^{-3}, i = 1,...,n^{3} \}$, which discretizes $[0,1]$ up to distances $n^{-3}$, we obtain by Corollary \ref{proposition_rosenthal_bound_rates} that 
   \begin{equation}
        \sqrt{nh}\sup_{v\in V_n}|\hat g_{n,h}(v) - \IE \hat g_{n,h}(v)| = \sup_{f\in \sF}|\G_n(f)| = O_p\big(\sqrt{\log |V_n| }\big) = O_p\big(\log(n)^{1/2}\big),\label{example_regeression_eq2}
   \end{equation}
   where
   \[
        \sF = \{f_v(x,u) = \frac{1}{\sqrt{h}}K(\frac{u - v}{h})x: v \in V_n\}.
    \] 
    The conditions of Corollary \ref{proposition_rosenthal_bound_rates} are easily verified: It holds that $f_v(x,u) = D_{f,n}(u)\cdot \bar f_v(x,u)$ with $D_{f,n}(u) = \frac{1}{\sqrt{h}}K(\frac{u - v}{h})$ and $\bar f_v(x,u) = x$. Thus, Assumption \ref{ass1} is satisfied with $\Delta(k) = 2\delta_2^{X}(k)$, $p = \infty$, $R(\cdot) = C_R = 1$. Furthermore, $\ID_n = |K|_{\infty}, \ID_{\nu,n} = \frac{|K|_{\infty}}{\sqrt{h}}$, and
    \[
        \norm{f_v}_{2,n} \le \frac{1}{\sqrt{h}} \Big( \frac{1}{n} \sum_{i = 1}^n K(\frac{v - u}{h})^2\norm{X_i}_2^2 \Big)^{1/2} \le C_X |K|_\infty,
    \]
    which shows that $\sup_{f\in\sF}V_n(f) = O(1)$. The conditions on $h$ emerge from the last condition in \reff{proposition_rosenthal_bound_rates_cond1} and using the bounds for $r(\cdot)$ from Table \ref{table_r_values}.

\end{ex}

For the following two examples we suppose that the underlying process $X_i$ is locally stationary in the sense of Assumption \ref{ass_clt_process}. Similar assumptions are posed in \cite{dahlhaus2019} and are fulfilled for a large variety of locally stationary processes.

In the same spirit as in Example \ref{example_regeression}, it is possible to derive uniform rates of convergence for M-estimators of parameters $\theta$ in models of locally stationary processes. Furthermore, weak Bahadur representations can be obtained. The following results apply for instance to maximum likelihood estimation of parameters in tvARMA or tvGARCH processes. The main tool is to prove uniform convergence of the corresponding objective functions and its derivatives. Since the rest of the proof is  standard, the details are postponed to the Supplementary Material, Section \ref{sec_examples_supp}. Let $\nabla_{\theta}^j$ denote the $j$-th derivative with respect to $\theta$. To apply empirical process theory, we ask for the objective functions to be $(L_{\sF},1,R,C)$-classes in (A1) and Lipschitz with respect to $\theta$ in (A2).

\begin{lem}[M-estimation, uniform results]\label{example_maximumlikelihood}
    Let $\Theta \subset \R^{d_{\Theta}}$ be compact and  $\theta_0:[0,1] \to \text{interior}(\Theta)$. For each $\theta\in\Theta$, let $\ell_{\theta}:\R^{k} \to \R$ be some measurable function which is twice continuously differentiable. Let $Z_i = (X_i,...,X_{i-k+1})$, and define for $v\in [0,1]$,
    \[
        \hat \theta_{n,h}(v) := \arg\min_{\theta \in \Theta}L_{n,h}(v,\theta), \quad\quad L_{n,h}(v,\theta) := \frac{1}{n}\sum_{i=k}^{n}K_h\big(\frac{i}{n}-v\big)\cdot \ell_{\theta}(Z_i)
    \]
    Suppose that there exists $C_{\Theta} > 0$ such that for $j\in \{0,1,2\}$,
    \begin{itemize}
        \item[(A1)] $\bar\sF_{j} = \{\nabla_{\theta}^j\ell_{\theta}:\theta \in \Theta\}$ is an $(L_{\sF},1,R,C)$-class with $R(z) = 1+|z|_1^{M-1}$ for some $M \ge 1$ and Assumption \ref{ass_clt_process} for $\bar \sF_j$ is fulfilled with $s=1$, $p=\frac{M}{M-1}$.
        \item[(A2)] for all $z\in\R^{k}$, $\theta,\theta' \in \Theta$,
    \[
        \big|\nabla_{\theta}^j\ell_{\theta}(z) - \nabla_{\theta}^j\ell_{\theta'}(z)\big|_{\infty} \le C_{\Theta}(1+|z|_1^M)\cdot |\theta - \theta'|_2,
    \]
        \item[(A3)] $\theta \mapsto \E \ell_{\theta}(\tilde Z_0(v))$ attains its global minimum in $\theta_0(v)$ with positive definite $I(v) := \IE \nabla_{\theta}^2 \ell_{\theta}(\tilde Z_0(v))$.
    \end{itemize}
    Furthermore, suppose that either
   \begin{itemize}
       \item $\delta_{2M}^{X}(j) \le \kappa j^{-\alpha}$ with some $\kappa > 0, \alpha > 1$, and $h \gtrsim  (\frac{\log(n)}{n^{1-\frac{2}{\nu}}})^\frac{\alpha-1}{\alpha}$, or
       \item $\delta_{2M}^{X}(j) \le \kappa \rho^{j}$ with some $\kappa > 0, \rho \in (0,1)$ and $h \gtrsim \frac{\log(n)^3}{n^{1-\frac{2}{\nu}}}$.
   \end{itemize}
    Define $\tau_{n} := \sqrt{\frac{\log(n)}{nh}}$ and $B_h := \sup_{v\in[0,1]}|\IE \nabla_{\theta} L_{n,h}(v,\theta_0(v))|$ (the bias). Then, $B_h = O(h^{\varsigma})$, and as $nh\to\infty$,
    \[
        \sup_{v \in [\frac{h}{2},1-\frac{h}{2}]}\big|\hat \theta_{n,h}(v) - \theta_0(v)\big| = O_p\big(\tau_n + B_h\big)
    \]
    and
    \[
        \sup_{v\in[\frac{h}{2},1-\frac{h}{2}]}\big|\{\hat \theta_{n,h}(v) - \theta_0(v)\} - I(v)^{-1}\nabla_{\theta} L_{n,h}(v,\theta_0(v))\big| = O_p((\tau_n + h^{\varsigma})(\tau_n + B_h)).
    \]
\end{lem}
\begin{rem}
    \begin{itemize}
        \item In the tvAR(1) case $X_i = a(i/n) X_{i-1} + \varepsilon_i$, we can use for instance
    \[
        \ell_{\theta}(x_1,x_0) = (x_1 - a x_0)^2,
    \]
    which for $a\in (-1,1)$ is a $((1,a),1,|x_0|+|x_1|,(0,1))$-class.
        \item With more smoothness assumptions on $\nabla_{\theta}\ell$ or using a local linear estimation method for $\hat \theta_{n,h}$, the bias term $B_h$ can be shown to be of smaller order, for instance $O(h^2)$ (cf. \cite{dahlhaus2019}).
        \item The theory derived in this paper can also be used to prove asymptotic properties of M-estimators based on objective functions $\ell_{\theta}$ which are only almost everywhere differentiable in the Lebesgue sense by following the theory of chapter 5 in \cite{Vaart98}. This is of utmost interest for $\ell_{\theta}$ that have additional analytic properties, such as convexity. Since these properties are also needed in the proofs, we will not discuss this in detail.
    \end{itemize}
\end{rem}

We give an easy application of the functional central limit theorem from Theorem \ref{cor_functional_central_limit_theorem} by inspecting a local stationary version of Example 19.25 in \cite{Vaart98}.

\begin{ex}[Local mean absolute deviation]\label{example_mad}
    For fixed $v \in (0,1)$, put $ \overline{X_n}(v) := \frac{1}{n}K_h\big(\frac{i}{n}-v\big)X_i$ and define the mean absolute deviation
    \[
        \text{mad}_n(v) := \frac{1}{n}\sum_{i=1}^{n}K_h\big(\frac{i}{n}-v\big)|X_i - \overline{X_n}(v)|.
    \]
    Let Assumption \ref{ass_clt_process} hold with $s=1$, $p= \infty$. Suppose that $\IP(\tilde X_0(v) = \IE \tilde X_0(v))=0$ and that for some $\kappa > 0, \alpha > 1$, $\delta_2^{X}(j) \le \kappa j^{-\alpha}$. We show that if $nh\to \infty$ and $nh^{1+2\varsigma}\to 0$,
    \begin{equation}
        \sqrt{nh}\big(\text{mad}_n(v) - \IE|\tilde X_0(v) - \mu|\big) \dto N(0, \sigma^2),\label{example_mad_eq0}
    \end{equation}
    where $\mu = \IE \tilde X_0(v)$, $G$ denotes the distribution function of $\tilde X_0(v)$ and
    \begin{eqnarray*}
        \sigma^2 &=& \int K(u)^2 du \cdot \sum_{j=0}^{\infty}\Cov\big(|\tilde X_0(v) - \mu| + (2G(\mu)-1)\tilde X_0(v),\\
        &&\quad\quad\quad\quad\quad\quad\quad\quad\quad\quad\quad\quad |\tilde X_j(v) - \mu| + (2G(\mu)-1)\tilde X_j(v)\big).
    \end{eqnarray*}
    The result is obtained by using the decomposition
    \begin{eqnarray*}
        &&\sqrt{nh}\big(\text{mad}_n(v) - \IE|\tilde X_0(v) - \mu|\big) = \G_n(f_{\overline{X_n}(v)} - f_{\mu}) + \G_n(f_{\mu}) + A_n,\\
        &&\quad\quad\quad\quad A_n = \frac{\sqrt{nh}}{n}\sum_{i=1}^{n}K_h\big(\frac{i}{n}-v\big)\{\IE|X_i - \theta| - \IE|\tilde X_0(v) - \mu|\big\}\Big|_{\theta = \overline{X_n}(v)},
    \end{eqnarray*}
    where $\Theta = \{\theta \in \R: |\theta - \mu| \le 1\}$ and 
    \[
        \sF = \{f_{\theta}(x,u) = \sqrt{h}K_h(u-v)|x-\theta|: \theta \in \Theta\}.
    \]
    By the triangle inequality, $\sF$ satisfies Assumption \ref{ass1_vorher} with $\bar f_{\theta}(x,u) = |x-\theta|$, $R(\cdot) = C_R = 1$, $p=\infty$, $s=1$ and $\Delta(k) = 2\delta_{2}^{X}(k)$. Assumption \ref{ass_clt_fcont} is trivially fulfilled since $\bar f$ does not depend on $u$. Since $\sF$ is a one-dimensional Lipschitz class, $\sup_{n\in\N}\IH(\varepsilon,\sF, \|\cdot\|_{2,n}) = O(\log(\varepsilon^{-1}\vee 1))$. By Corollary \ref{cor_functional_central_limit_theorem}, we obtain that there exists some process $[\G(f_{\theta})]_{\theta \in \Theta}$ such that for $h \to 0$, $nh \to \infty$,
    \begin{equation}
        \big[\G_n(f_{\theta})\big]_{\theta \in \Theta} \dto \big[\G(f_{\theta})\big]_{\theta\in \Theta}\quad\quad \text{ in }\ell^{\infty}(\Theta).\label{example_mad_eq01}
    \end{equation}
    Furthermore, by Assumption \ref{ass_clt_process},
    \begin{eqnarray}
        &&\|f_{\overline{X_n}(v)}(X_i) - f_{\mu}(X_i)\|_2\nonumber\\
        &\le& \|\overline{X_n}(v) - \mu\|_2 \le \|\overline{X_n}(v) - \IE \overline{X_n}(v)\|_2 + \|\IE \overline{X_n}(v) - \mu\|_2\nonumber\\
        &\le& \frac{1}{\sqrt{nh}}\Big(\frac{1}{nh}\sum_{i=1}^{n}K\big(\frac{\frac{i}{n}-v}{h}\big)^2\Big)^{1/2}\sum_{j=0}^{\infty}\delta_2^{X}(j) + \frac{1}{n}\sum_{i=1}^{n}K_h(\frac{i}{n}-v)\big|\IE X_i - \IE \tilde X_0(v)|\nonumber\\
        &=& O((nh)^{-1/2} + h^{\varsigma}).\label{example_mad_eq02}
    \end{eqnarray}
    By Lemma 19.24 in \cite{Vaart98}, we conclude from \reff{example_mad_eq01} and \reff{example_mad_eq02} that
    \begin{equation}
        \G_n(f_{\overline{X_n}(v)} - f_{\mu}) \pto 0.\label{example_mad_eq1}
    \end{equation}
    
    By Assumption \ref{ass_clt_process} and bounded variation of $K$,
    \begin{equation}
        A_n = \sqrt{nh}\big\{\IE|\tilde X_0(v) - \theta|\big|_{\theta = \overline{X_n}(v)} - \IE|\tilde X_0(v) - \mu|\big\} + O_p( (nh)^{-1/2} + (nh)^{1/2}h^{-\varsigma}).\label{example_mad_eq2}
    \end{equation}
    Due to $\IP(\tilde X_0(v) = \mu) = 0$, $g(\theta) = \IE|\tilde X_0(v)-\theta|$ is differentiable in $\theta = \mu$ with derivative $2G(\mu)-1$. The Delta method delivers
    \begin{eqnarray}
        &&\sqrt{nh}\big\{\IE|\tilde X_0(v) - \theta|\big|_{\theta = \overline{X_n}(v)} - \IE|\tilde X_0(v) - \mu|\big\}\nonumber\\
        &=& (2G(\mu)-1)\sqrt{nh}(\overline{X_n}(v)-\mu) + o_p(1).\label{example_mad_eq3}
    \end{eqnarray}
    From \reff{example_mad_eq1}, \reff{example_mad_eq2} and \reff{example_mad_eq3} we obtain
    \[
        \sqrt{nh}\big(\text{mad}_n(v) - \IE|\tilde X_0(v) - \mu|\big) = \G_n(f_{\mu} + (2G(\mu)-1)\text{id}) + o_p(1).
    \]
    Theorem \ref{theorem_clt_mult} now yields \reff{example_mad_eq0}.
\end{ex}

\section{Conclusion}
\label{sec_conclusion} In this paper, we have developed a new empirical process theory for locally stationary processes with the functional dependence measure. We have proven a functional central limit theorem and  maximal inequalities. A general empirical process theory for locally stationary processes is a key step to derive asymptotic and nonasymptotic results for M-estimates or testing based on $L^{2}$- or $L^{\infty}$-statistics. We have given an example in nonparametric estimation where our theory is applicable. Due to the possibility to analyze the size of the function class and the stochastic properties of the underlying process separately, we conjecture that our theory also permits an extension of various results from i.i.d. to dependent data, such as empirical risk minimization.

From a technical point of view, the linear and moment-based nature of the functional dependence measure has forced us to modify several approaches from empirical process theory for i.i.d. or mixing variables. A main issue was given by the fact that the dependence measure only transfers decay rates for continuous functions. We therefore have provided a new chaining technique which preserves continuity of the arguments of the empirical process.

In principle, a similar empirical process theory for locally stationary processes can be established under mixing conditions such as absolute regularity. This would be a generalization of the results found in \cite{rio1995} and \cite{Dedeck02}. As we have seen in Section \ref{sec_discussion_mixing}, such a theory would pose additional moment conditions on $f(Z_i,\frac{i}{n})$. Contrary to that, our framework only requires second moments of $f(Z_i,\frac{i}{n})$, but the entropy integral is enlarged by some factor which increases with stronger dependence. Moreover, in nearly all models the derivation of a bound for mixing coefficients needs continuity of the innovation process which may not be suitable in several examples. Therefore, we consider our theory as a valuable addition to this existing theory even in the stationary case.

One could also think of an extension of our empirical process theory to functions $f(z,u)$ which are noncontinuous with respect to $z$. This can in principle be done by using a martingale decomposition and assuming continuity of $z \mapsto \IE[f(Z_i,\frac{i}{n})|\sG_{i-1}=z]$, instead. However, one typically can only expect continuity of this functional if either already $z \mapsto f(z,u)$ was continuous or $\varepsilon_i$ has a continuous density. In the latter case, the sequence might also be $\beta$-mixing, and a more detailed discussion about advantages of our formulation is necessary.

\section*{Acknowledgements}

The authors would like to thank the associate editor and two anonymous referees for their helpful remarks which helped to provide a much more concise version of the paper.

\section{Appendix}

In the appendix, we present the basic ideas used to prove the maximal inequalities of Section \ref{sec_max_asymptotic_tightness}. We first consider the finite version in Section \ref{sec_max_finite} and then present a chaining approach which preserves continuity in Section \ref{sec_maximal_chaining}.

\subsection{Proof idea: A maximal inequality for finite $\sF$, Theorem \ref{proposition_rosenthal_bound}}
\label{sec_max_finite}

We provide an approach to obtain maximal inequalities for sums of random variables $W_i(f)$, $i = 1,...,n$, indexed by $f\in\sF$, by using a decomposition into independent random variables. An approach with similar intentions is presented in \cite{Dedeck02} (Section 4.3 therein) for absolutely regular sequences and in \cite{tjost16} for Harris-recurrent Markov chains. For convenience, we abbreviate
\[
    W_i(f) = f(Z_i,\frac{i}{n})
\]
and put $S_n(f) := \sum_{i=1}^{n}W_i(f)$.

To approximate $W_i(f)$ by independent variables, we use a technique from \cite{Wu13} which was refined in \cite{wuzhang2017}. This decomposition is much more involved then the ones for Harris-recurrent Markov chains or mixing sequences since no direct coupling method is available. Define 
\[
    W_{i,j}(f) := \E[W_i(f)|\varepsilon_{i-j},\varepsilon_{i-j+1},...,\varepsilon_i],\quad\quad j\in\N,
\]
and
\[
    S_n(f):= \sum_{i = 1}^n \{W_i(f) - \IE W_i(f)\}, \quad\quad S_{n,j}(f) := \sum_{i=1}^{n}\{W_{i,j}(f) - \IE W_{i,j}(f)\}.
\]
Let $q \in\{1,...,n\}$ be arbitrary. Put $L := \lfloor\frac{\log(q)}{\log(2)}\rfloor$ and $\tau_l := 2^l$ ($l=0,...,L-1$), $\tau_L := q$. Then we have
\[
    W_i(f) = W_i(f) - W_{i,q}(f) + \sum_{l=1}^{L}(W_{i,\tau_{l}}(f)-W_{i,\tau_{l-1}}(f)) + W_{i,1}(f)
\]
(in the case $q = 1$, the sum in the middle does not appear) and thus
\[
    S_n(f) = \big[S_n(f) - S_{n,q}(f)\big] + \sum_{l=1}^{L}\big[S_{n,\tau_l}(f) - S_{n,\tau_{l-1}}(f)\big] + S_{n,1}(f).
\]
We write
\[
    S_{n,\tau_l}(f) - S_{n,\tau_{l-1}}(f) = \sum_{i=1}^{\lfloor \frac{n}{\tau_l}\rfloor+1}T_{i,l}(f),\quad\quad T_{i,l}(f) := \sum_{k=(i-1)\tau_l+1}^{(i\tau_l)\wedge n}\big[W_{k,\tau_l}(f) - W_{k,\tau_{l-1}}(f)\big].
\]
The random variables $T_{i,l}(f), T_{i',l}(f)$ are independent if $|i-i'| > 1$. This leads to the decomposition
\begin{eqnarray}
    \max_{f\in \sF}\big|\G_n(f)\big| &\le& \max_{f\in \sF}\frac{1}{\sqrt{n}}\big|S_n(f) - S_{n,q}(f)\big|\nonumber\\
    && + \sum_{l=1}^{L}\Big[\max_{f\in \sF}\Big|\frac{1}{\sqrt{\frac{n}{\tau_l}}}\underset{i \text{ even}}{\sum_{i=1}^{\lfloor \frac{n}{\tau_l}\rfloor + 1}}\frac{1}{\sqrt{\tau_l}}T_{i,l}(f)\Big| + \max_{f\in \sF}\Big|\frac{1}{\sqrt{\frac{n}{\tau_l}}}\underset{i \text{ odd}}{\sum_{i=1}^{\lfloor \frac{n}{\tau_l}\rfloor + 1}}\frac{1}{\sqrt{\tau_l}}T_{i,l}(f)\Big|\Big]\nonumber\\
    && + \max_{f\in \sF}\frac{1}{\sqrt{n}}\big|S_{n,1}^{W}(f)\big|.\label{rosenthal_main_decomposition}
\end{eqnarray}

While the first term in \reff{rosenthal_main_decomposition} can be made small by assumptions on the dependence of $W_i(f)$ and by the use of a large deviation inequality for martingales in Banach spaces from \cite{pinelis1994}, the second and third term allow the application of Rosenthal-type bounds due to the independency of the summands $T_{i,l}(f)$ and $W_{i,1}(f)$, respectively. Since the first term in \reff{rosenthal_main_decomposition} allows for a stronger bound in terms of $n$ than it is the case for mixing, we can obtain a theory which only needs second moments of $W_i(f) = f(X_i,\frac{i}{n})$. By Assumption \ref{ass1_vorher}, we can show the following results (cf. Lemma \ref{depend_trans} in the Supplementary Material and recall \reff{definition_dinf} for the definition of $D_n^{\infty}$). For each $i=1,...,n$, $j\in\N$, $s\in\N \cup\{\infty\}$, $f\in\sF$,
\begin{eqnarray}
    \Big\| \sup_{f\in \sF}\big|W_i(f) - W_i(f)^{*(i-j)}\big|\,\Big\|_2 &\le&  D_{n}^{\infty}(\frac{i}{n})\Delta(j),\label{wproof_eq1}\\
    \big\|W_i(f) - W_i(f)^{*(i-j)}\big\|_2 &\le& |D_{f,n}(\frac{i}{n})|\cdot \Delta(j),\label{wproof_eq2}\\
    \big\|W_i(f)\|_s &\le& \big\|f(Z_i,\frac{i}{n})\big\|_s.\label{wproof_eq3}
\end{eqnarray}
We now summarize the proof of Theorem \ref{proposition_rosenthal_bound}. The detailed proof is found in the Supplementary material. Denote
\[
    A_1 := \max_{f\in \sF}\frac{1}{\sqrt{n}}\big|S_n^{W}(f) - S_{n,q}^{W}(f)\big|, \quad\quad A_2 := \sum_{l=1}^{L}\Big[\max_{f\in \sF}\Big|\frac{1}{\sqrt{\frac{n}{\tau_l}}}\underset{i \text{ even}}{\sum_{i=1}^{\lfloor \frac{n}{\tau_l}\rfloor + 1}}\frac{1}{\sqrt{\tau_l}}T_{i,l}(f)\Big|.
\]
The remaining terms in \reff{rosenthal_main_decomposition} are discussed similarly or are special cases. We first have
\[
        \E A_1 \le \sum_{j=q}^{\infty}\frac{1}{\sqrt{n}}\E \max_{f\in \sF}\Big|\sum_{i=1}^n E_{i,j}(f)\Big|,
    \]
    where $E_{i,j}(f) = \E[W_i(f)|\varepsilon_{i-j},...,\varepsilon_i] - \E[W_i(f)|\varepsilon_{i-j+1},...,\varepsilon_i]$ is a martingale difference sequence with respect to $\sG^{i} = \sigma(\varepsilon_{i-j},\varepsilon_{i-j+1},...)$. For $x = (x_f)_{f\in \sF}$ and $s := 2 \vee \log|\sF|$, define $|x|_s := (\sum_{f\in \sF}|x_f|^s)^{1/s}$. Then
    \[
        \E \max_{f\in \sF}\Big|\sum_{i=1}^n E_{i,j}\Big| \le \IE \Big|\sum_{i=1}^n E_{i,j}\Big|_{s} \le \Big\| \Big|\sum_{i=1}^n E_{i,j}\Big|_{s}\Big\|_2.
    \]
    By Theorem 4.1 in \cite{pinelis1994} there exists an absolute constant $c_1 > 0$ such that for $s > 1$,
    \[
        \Big\| \Big|\sum_{i=1}^n E_{i,j}(f)\Big|_{s}\Big\|_2 \le c_1 \sqrt{s}\Big(\sum_{i=1}^{n}\big\| |E_{i,j}|_s \big\|_2^2 \Big)^{1/2}.
    \]
    By using \reff{wproof_eq1} and standard techniques for the functional dependence measure, we conclude that
    \[
        \big\| |E_{i,j}|_s \|_2 \le s^{1/s}\big\| \sup_{f\in \sF}|E_{i,j}(f)| \big\|_2 \le e D_{n}^{\infty}(\frac{i}{n})\Delta(j),
    \]
    Summarizing the results, we obtain
    \begin{equation}
        \IE A_1 \le 2ec_1 \cdot \sqrt{H}\cdot \ID_n^{\infty}\beta(q).\label{wproof_eq4}
    \end{equation}
    Regarding $A_2$, we have
    \[
        \IE A_2 \le \sum_{l=1}^{L}\Big[\IE\max_{f\in \sF}\Big|\frac{1}{\sqrt{\frac{n}{\tau_l}}}\underset{i \text{ even}}{\sum_{i=1}^{\lfloor \frac{n}{\tau_l}\rfloor + 1}}\frac{1}{\sqrt{\tau_l}}T_{i,l}(f)\Big|\Big].
    \]
    Using the fact that martingale sequences are uncorrelated, \reff{wproof_eq2} and the simple bound $|W_i(f)| \le M$, one can show that 
    \[
        \|T_{i,l}\|_2 \le \sum_{j=\tau_{l-1} + 1}^{\tau_l}\min\big\{\|f\|_{2,n}, \ID_n \Delta(\lfloor \frac{j}{2}\rfloor)\big\}, \quad\quad \frac{1}{\sqrt{\tau_l}}|T_{i,l}(f)| \le 2\sqrt{\tau_l} M.
    \]
    A maximal inequality for independent random variables based on Bernstein's inequality yields that there exists some universal constant $c_2 > 0$ such that
    \begin{eqnarray*}
        &&\IE\max_{f\in \sF}\Big|\frac{1}{\sqrt{\frac{n}{\tau_l}}}\underset{i \text{ even}}{\sum_{i=1}^{\lfloor \frac{n}{\tau_l}\rfloor + 1}}\frac{1}{\sqrt{\tau_l}}T_{i,l}(f)\Big|\\
        &\le& c_2\cdot \Big[\max_{f}\Big(\frac{1}{\frac{n}{\tau_l}} \underset{i \text{ even}}{\sum_{i=1}^{\lfloor \frac{n}{\tau_l}\rfloor + 1}}\norm{\frac{1}{\sqrt{\tau_l}}T_{i,l}(f)}_{2}^2 \Big)^{1/2} \sqrt{H} + \frac{\sup_{f\in \sF}\big\|\frac{1}{\sqrt{\tau_l}}|T_{i,l}(f)|\big\|_{\infty} H}{\sqrt{\frac{n}{\tau_l}}}\Big]\\
        &\le& 4c_2 \Big[\Big(\sum_{j=\tau_{l-1}+1}^{\tau_l} \min\{\max_{f\in\sF}\|f\|_{2,n},\ID_{n}\Delta(\lfloor\frac{j}{2}\rfloor )\}\Big)\cdot \sqrt{H} + \frac{\tau_l M H}{\sqrt{n}}\Big].
    \end{eqnarray*}
    By monotonicity of the first term with respect to $\|f\|_{2,n}$ and
    \[
        \sum_{l=1}^{L}\sum_{j=\tau_{l-1}+1}^{\tau_l} \min\{\|f\|_{2,n},\ID_{n}\Delta(\lfloor\frac{j}{2}\rfloor )\} \le 2 V_n(f), \quad\quad \sum_{l=1}^{L}\tau_l \le 2q,
    \]
    we obtain with some universal constant $c_3 > 0$,
    \[
        \IE A_2 \le c_3\Big(\sup_{f\in \sF}V_n(f) \sqrt{H} + \frac{q MH}{\sqrt{n}}\Big) \le c_3\Big(\sigma \sqrt{H} + \frac{q MH}{\sqrt{n}}\Big),
    \]
    which together with \reff{wproof_eq4} provides the result of Theorem \ref{proposition_rosenthal_bound}.

\subsection{An elementary chaining approach which preserves continuity}
\label{sec_maximal_chaining}

In this section, we provide a chaining approach which preserves continuity of the functions inside the empirical process. Typical chaining approaches work with indicator functions which is not suitable for application of Theorem \ref{proposition_rosenthal_bound}. We replace the indicator functions by suitably chosen truncations. For $m > 0$, define $\varphi_m^{\wedge}:\R \to \R$ and the corresponding ``peaky'' residual $\varphi_m^{\vee}:\R \to \R$ via  
\[
    \varphi_{m}^{\wedge}(x) := (x\vee (-m))\wedge m, \quad\quad 
    \varphi_m^{\vee}(x) := x - \varphi_m^{\wedge}(x).
\]

In the following, assume that for each $j\in\N_0$ there exists a decomposition $\sF = \bigcup_{k=1}^{N_j}\sF_{jk}$, where $(\sF_{jk})_{k=1,...,N_j}$, $j\in\N_0$ is a sequence of nested partitions. For each $j\in\N_0$ and $k\in \{1,...,N_j\}$, choose a fixed element $f_{jk} \in \sF_{jk}$. For $j\in\N_0$, define $\pi_j f := f_{jk}$ if $f \in \sF_{jk}$.

Assume furthermore that there exists a sequence $(\Delta_{j}f)_{j\in\N}$ such that for all $j\in\N_0$, $\sup_{f,g\in \sF_{jk}}|f-g| \le \Delta_j f$. Finally, let  $(m_j)_{j\in\N_0}$ be a decreasing sequence which will serve as a truncation sequence.

For $j\in\N_0$, we use the decomposition
\begin{eqnarray*}
    f - \pi_j f= \varphi_{m_j}^{\wedge}(f-\pi_j f) + \varphi_{m_j}^{\vee}(f-\pi_j f)
\end{eqnarray*}
Since
\begin{eqnarray}
    f-\pi_jf &=& f-\pi_{j+1}f + \pi_{j+1}f-\pi_j f\nonumber\\
    &=& \varphi_{m_{j+1}}^{\wedge}(f-\pi_{j+1}f) + \varphi_{m_{j+1}}^{\vee}(f-\pi_{j+1}f)\nonumber\\
    &&\quad\quad+ \varphi_{m_j - m_{j+1}}^{\wedge}(\pi_{j+1}f-\pi_j f) + \varphi_{m_j - m_{j+1}}^{\vee}(\pi_{j+1}f-\pi_j f),\label{truncation_decomposition_eq0}
\end{eqnarray}
we can write
\begin{equation}
    \varphi_{m_j}^{\wedge}(f-\pi_j f) = \varphi_{m_{j+1}}^{\wedge}(f-\pi_{j+1}f) + \varphi_{m_j-m_{j+1}}^{\wedge}(\pi_{j+1}f-\pi_j f) + R(j),\label{truncation_decomposition_eq1}
\end{equation}
where
\[
    R(j) := \varphi_{m_j}^{\wedge}(f-\pi_j f) - \varphi_{m_j}^{\wedge}(\varphi_{m_{j+1}}^{\wedge}(f-\pi_{j+1}f)) - \varphi_{m_j}^{\wedge}(\varphi_{m_j-m_{j+1}}^{\wedge}(\pi_{j+1}f-\pi_j f)).
\]

To bound $R(j)$, we use (i) of the following elementary Lemma \ref{lemma_properties_truncationfunction} which is proved in Section \ref{sec_maximal_chaining_supp} included in the Supplementary Material.

\begin{lem}\label{lemma_properties_truncationfunction}
    Let $y,x,x_1,x_2,x_3$ and $m,m' > 0$ be real numbers. Then the following assertions hold:
    \begin{enumerate}
        \item[(i)] If $|x_1| + |x_2| \le m$, then
        \[
            \big|\varphi_m^{\wedge}(x_1+x_2+x_3) - \varphi_m^{\wedge}(x_1) - \varphi_m^{\wedge}(x_2)\big| \le \min\{|x_3|,2m\}.
        \]
        \item[(ii)] $|\varphi_m^{\wedge}(x)| \le \min\{|x|,m\}$ and if $|x| < y$,
        \[
            |\varphi_m^{\vee}(x)| \le \varphi_m^{\vee}(y) \le  y\Ii_{\{y>m\}}.
        \]
        \item[(iii)] If $\sF$ fulfills Assumption \ref{ass1}, then Assumption \ref{ass1} also holds for $\{\varphi_{m}^{\wedge}(f): f\in \sF\}$ and $\{\varphi_{m}^{\vee}(f): f\in \sF\}$.
    \end{enumerate}
\end{lem}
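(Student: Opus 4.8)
The plan is to reduce all three assertions to two elementary facts about the clipping map: that $\varphi_m^{\wedge}$ takes values in $[-m,m]$, and that both $\varphi_m^{\wedge}$ and $\varphi_m^{\vee}$ are $1$-Lipschitz, odd, and vanish at $0$. The Lipschitz property becomes transparent once one writes $\varphi_m^{\wedge}(x)=\mathrm{sign}(x)\,(|x|\wedge m)$ and $\varphi_m^{\vee}(x)=\mathrm{sign}(x)\,(|x|-m)_+$, so that each is monotone nondecreasing and piecewise of slope $0$ or $1$. I would record these facts first and then invoke them throughout; parts (i) and (ii) are then one-liners and (iii) is where the real content lies.

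For (i), I would first use $|x_1|+|x_2|\le m$ to conclude $|x_1|,|x_2|\le m$ and $|x_1+x_2|\le m$, hence $\varphi_m^{\wedge}(x_1)=x_1$, $\varphi_m^{\wedge}(x_2)=x_2$ and $\varphi_m^{\wedge}(x_1+x_2)=x_1+x_2$. Writing $a:=x_1+x_2$, the claim reduces to bounding $|\varphi_m^{\wedge}(a+x_3)-\varphi_m^{\wedge}(a)|$, which is at most $|x_3|$ by the $1$-Lipschitz property and at most $|\varphi_m^{\wedge}(a+x_3)|+|a|\le 2m$ since both terms lie in $[-m,m]$; taking the minimum finishes the bound. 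For (ii), the inequality $|\varphi_m^{\wedge}(x)|\le\min\{|x|,m\}$ follows from the range and from $|\varphi_m^{\wedge}(x)|=|\varphi_m^{\wedge}(x)-\varphi_m^{\wedge}(0)|\le|x|$; for the residual I would use $|\varphi_m^{\vee}(x)|=(|x|-m)_+$, note this is nondecreasing in $|x|$ so that $|x|<y$ gives $|\varphi_m^{\vee}(x)|\le(y-m)_+=\varphi_m^{\vee}(y)$, and finally observe $(y-m)_+$ equals $0$ for $y\le m$ and is $\le y$ otherwise, i.e. $\le y\,\Ii_{\{y>m\}}$.

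The substantive part is (iii). Each $f\in\sF$ carries the decomposition $f(z,u)=D_{f,n}(u)\bar f(z,u)$ with $\bar f\in\bar\sF$ belonging to an $(L_{\sF},s,R,C)$-class. For $g:=\varphi_m^{\wedge}(f)$ (and identically for $\varphi_m^{\vee}(f)$) I would keep the same deterministic factor, setting $D_{g,n}:=D_{f,n}$ and $\bar g(z,u):=\varphi_m^{\wedge}(f(z,u))/D_{f,n}(u)$ where $D_{f,n}(u)\ne 0$, and $\bar g(z,u):=0$ otherwise (legitimate since $D_{f,n}(u)=0$ forces $f(\cdot,u)\equiv 0$ and thus $g(\cdot,u)\equiv 0$). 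The key computation is that, for $D_{f,n}(u)\ne 0$, the $1$-Lipschitz property gives
\[
|\bar g(z,u)-\bar g(z',u)|=\frac{|\varphi_m^{\wedge}(f(z,u))-\varphi_m^{\wedge}(f(z',u))|}{|D_{f,n}(u)|}\le\frac{|f(z,u)-f(z',u)|}{|D_{f,n}(u)|}=|\bar f(z,u)-\bar f(z',u)|,
\]
so $\bar g$ inherits the H\"older bound of $\bar f$ with the \emph{same} $L_{\sF}$, $s$ and remainder $R$. Likewise $|\bar g(0,u)|\le|\bar f(0,u)|\le C_{\bar f}$, and since $R$ is literally unchanged, the moment conditions $\sup_{i,u}\|R(Z_i,u)\|_{\nu p}\le C_R$, $\sup_{i,j}\|X_{ij}\|_{\frac{\nu sp}{p-1}}\le C_X$ and the bound on $\Delta(k)$ continue to hold verbatim. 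Hence both $\{\varphi_m^{\wedge}(f):f\in\sF\}$ and $\{\varphi_m^{\vee}(f):f\in\sF\}$ are again $(L_{\sF},s,R,C)$-classes satisfying Assumption \ref{ass1}.

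I expect the only real obstacle to be the bookkeeping in (iii): checking that retaining $D_{f,n}$ as the deterministic factor and dividing it out is well-defined (the degenerate case $D_{f,n}(u)=0$) and, crucially, that this choice lets the \emph{same} $R$ serve the truncated class so that no moment or dependence constant deteriorates. Everything else reduces to the repeated use of the clipping range and the $1$-Lipschitz property, so the proof is essentially mechanical once these facts are in place.
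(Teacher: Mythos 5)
Your proposal is correct and follows essentially the same route as the paper: both arguments rest on the $1$-Lipschitz property and the range bound of $\varphi_m^{\wedge}$ and $\varphi_m^{\vee}$ (the paper establishes the Lipschitz contraction $|\varphi_m^{\wedge}(f)(z)-\varphi_m^{\wedge}(f)(z')|\le|f(z)-f(z')|$, and likewise for $\varphi_m^{\vee}$, by case analysis, which is exactly what your slope-$0/1$ representation delivers). If anything, your part (iii) is more careful than the paper's, which stops at the contraction estimate and asserts the conclusion, whereas you explicitly verify the factorization bookkeeping of Assumption \ref{ass1} — keeping $D_{g,n}=D_{f,n}$, dividing out the deterministic factor, handling the degenerate case $D_{f,n}(u)=0$, and checking that $L_{\sF}$, $s$, $R$ and $C$ carry over unchanged.
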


Because the partitions are nested, we have $|\pi_{j+1} f - \pi_j f|  \le \Delta_{j}f$. By Lemma \ref{lemma_properties_truncationfunction} and \reff{truncation_decomposition_eq0}, we have
\begin{eqnarray}
    |R(j)| &\le& \min\big\{\big|\varphi_{m_{j+1}}^{\vee}(f-\pi_{j+1}f) + \varphi_{m_j-m_{j+1}}^{\vee}(\pi_{j+1}f-\pi_{j}f)\big|,2m_j\big\}\nonumber\\
    &\le& \min\big\{\big|\varphi_{m_{j+1}}^{\vee}(\Delta_{j+1}f)\big|,2m_j\big\}+\min\big\{\big|\varphi_{m_j-m_{j+1}}^{\vee}(\Delta_{j}f)\big|,2m_j\big\}.\label{truncation_decomposition_eq2}
\end{eqnarray}
Let $\tau \in\N$. We then have with iterated application of \reff{truncation_decomposition_eq1} and linearity of $f \mapsto W_i(f)$,
\begin{eqnarray}
    && \G_n( \varphi_{m_0}^{\wedge}(f - \pi_0 f) )\nonumber\\
    &=& \G_n( \varphi_{m_1}^{\wedge}(f-\pi_{1} f) ) + \G_n( \varphi_{m_0 - m_{1}}^{\wedge}(\pi_1f - \pi_0 f)) + \G_n(R(0))\nonumber\\
    &=& \G_n(\varphi_{m_\tau}^{\wedge}(f-\pi_\tau f)) + \sum_{j=0}^{\tau-1}\G_n(\varphi_{m_j-m_{j+1}}^{\wedge}(\pi_{j+1}f - \pi_j f)) + \sum_{j=0}^{\tau-1}\G_n(R(j)),\label{truncation_decomposition_eq3}
\end{eqnarray}
which in combination with \reff{truncation_decomposition_eq2} can now be used for chaining. The following lemma provides the necessary balancing between the truncated versions of $\G_n(f)$ and the rare events excluded. Recall that $H(k) = 1 \vee \log(k)$ as in \reff{H_var}.

\begin{lem}[Compatibility lemma]\label{lemma_chaining} If $\sF$ fulfills $|\sF| \le k$ and Assumption \ref{ass1}, then  $\sup_{f\in \sF}V_n(f) \le \delta$,  $\sup_{f\in \sF}\|f\|_{\infty} \le m(n,\delta,k)$ imply
\begin{equation}
    \E \max_{f\in\sF}\big|\G_n(f)\big| \le c(1+\frac{\ID_n^{\infty}}{\ID_n}) \delta \sqrt{H(k)},\label{proposition_rosenthal_bound_implication2}
   \end{equation}
   and $\sup_{f\in \sF}V_n(f) \le \delta$ implies that for each $\gamma > 0$,
   \begin{equation}
    \sqrt{n}\|f \Ii_{\{f > \gamma\cdot m(n,\delta,k)\}}\|_{1,n} \le \frac{1}{\gamma}\frac{\ID_n}{\ID_n^{\infty}}\delta \sqrt{H(k)}.\label{proposition_rosenthal_bound_implication3}
\end{equation}
\end{lem}
\begin{proof}[Proof of Lemma \ref{lemma_chaining}]
For $q\in \N$, put $\beta_{norm}(q) := \frac{\beta(q)}{q}$. By Theorem \ref{proposition_rosenthal_bound} and the definition of $r(\cdot)$, 
    \begin{eqnarray*}
        \E \max_{f\in\sF}\big|\G_n(f)\big| &\le& c\Big( \delta\sqrt{H(k)} + q^{*}\Big(\frac{m(n,\delta,k)\sqrt{H(k)}}{\sqrt{n}\ID_n^{\infty}}\Big)\frac{m(n,\delta,k) H(k)}{\sqrt{n}}\Big)\\
        &=& c\Big( \delta\sqrt{H(k)} + \ID_n^{\infty} q^{*}(r(\frac{\delta}{\ID_n}))r(\frac{\delta}{\ID_n}) \sqrt{H(k)}\Big)\\
        &=& c(1+\frac{\ID_n^{\infty}}{\ID_n}) \delta \sqrt{H(k)}.
    \end{eqnarray*}
    which shows \reff{proposition_rosenthal_bound_implication2}. Since
    \[
        \|f(Z_i,\frac{i}{n})\Ii_{\{f(Z_i,\frac{i}{n}) > \gamma m(n,\delta,k)\}}\|_1 \le \frac{1}{\gamma m(n,\delta,k)}\|f(Z_i,\frac{i}{n})^2\|_1 = \frac{1}{\gamma m(n,\delta,k)}\|f(Z_i,\frac{i}{n})\|_2^2,
    \]
    for all $f\in \sF$ with $V_n(f) \le \delta$, it holds that
    \begin{eqnarray}
        \sqrt{n}\| f \Ii_{\{f > \gamma m(n,\delta,k)}\|_{1,n}  \le \frac{\sqrt{n}}{\gamma m(n,\delta,k)}\|f\|_{2,n}^2 \le \frac{1}{\gamma}\frac{\|f\|_{2,n}^2}{\ID_n^{\infty}r(\frac{\delta}{\ID_n})}\sqrt{H(k)}. \label{proposition_rosenthal_bound_implication3_part0}
    \end{eqnarray}
    
    If $\|f\|_{2,n} \ge \ID_n\Delta(1)$, we have
    \begin{equation}
        V_n(f) = \|f\|_{2,n} + \ID_n\sum_{j=1}^{\infty}\Delta(j) \ge \|f\|_{2,n} + \ID_n\beta(1).\label{proposition_rosenthal_bound_implication3_part1}
    \end{equation}
    In the case $\|f\|_{2,n} < \ID_n\Delta(1)$, the fact that $\Delta(\cdot)$ is decreasing implies that $a^{*} = \max\{j \in\N: \|f\|_{2,n} < \ID_n\Delta(j)\}$ is well-defined. We conclude that 
    \begin{eqnarray}
        V_n(f) &=& \|f\|_{2,n} + \sum_{j=0}^{\infty}\|f\|_{2,n} \wedge (\ID_n\Delta(j)) = \|f\|_{2,n} + \sum_{j=1}^{a^{*}}\|f\|_{2,n} + \ID_n\sum_{j=a^{*}+1}^{\infty}\Delta(j)\nonumber\\
        &=& \|f\|_{2,n}(a^{*}+1) + \ID_n\beta(a^{*}) \ge \|f\|_{2,n} a^{*} + \beta(a^{*}).\label{proposition_rosenthal_bound_implication3_part2}
    \end{eqnarray}
    Summarizing the results \reff{proposition_rosenthal_bound_implication3_part1} and \reff{proposition_rosenthal_bound_implication3_part2}, we have
    \[
        V_n(f) \ge \|f\|_{2,n} (a^{*} \vee 1) + \ID_n\beta(a^{*}\vee 1).
    \]
    We conclude that
    \[
        V_n(f) \ge \min_{a\in\N}\big[\|f\|_{2,n} a + \ID_n\beta(a)\big] \ge \|f\|_{2,n} \hat a + \ID_n\beta(\hat a),
    \]
    where $\hat a = \arg\min_{j\in\N}\big\{\|f\|_{2,n} \cdot j + \ID_n\beta(j)\big\}$.
    
    Since $\delta \ge V_n(f)$, we have $\delta \ge \ID_n\beta(\hat a) = \ID_n \beta_{norm}(\hat a) \hat a$. Thus $\beta_{norm}(\hat a) \le \frac{\delta}{\ID_n\hat a}$. By definition of $q^{*}$, $q^{*}(\frac{\delta}{\ID_n\hat a}) \le \hat a$. Thus $q^{*}(\frac{\delta}{\ID_n\hat a})\frac{\delta}{\ID_n\hat a} \le \frac{\delta}{\ID_n}$. By definition of $r(\cdot)$, $r(\frac{\delta}{\ID_n}) \ge \frac{\delta}{\ID_n\hat a}$. We conclude with $\|f\|_{2,n} \le V_n(f) \le \delta$ that
    \begin{equation}
        \frac{\|f\|_{2,n}^2}{\ID_n^{\infty} r(\frac{\delta}{\ID_n})} \le \frac{\ID_n\hat a \|f\|_{2,n}^2}{\ID_n^{\infty}\delta} \le \frac{\ID_n V_n(f) \|f\|_{2,n}}{\ID_n^{\infty}\delta} \le \frac{\ID_n}{\ID_n^{\infty}}\|f\|_{2,n} \le \frac{\ID_n}{\ID_n^{\infty}}\delta.\label{proposition_rosenthal_bound_implication3_bound2norm}
    \end{equation}
    
    Inserting the result into \reff{proposition_rosenthal_bound_implication3_part0}, we finally obtain that for all $f\in \sF$ with $V_n(f) \le \delta$ it holds that
    \[
        \sqrt{n}\| f \Ii_{\{f > \gamma m(n,\delta,k)}\|_{1,n} \le \frac{\sqrt{n}}{\gamma m(n,\delta,k)}\|f\|_{2,n}^2 \le \frac{1}{\gamma}\frac{\|f\|_{2,n}^2}{\ID_n^{\infty}r(\frac{\delta}{\ID_n})}\sqrt{H(k)} \le \frac{1}{\gamma}\frac{\ID_n}{\ID_n^{\infty}}\delta \sqrt{H(k)}.
    \]
    which shows \reff{proposition_rosenthal_bound_implication3}.
\end{proof}

\subsection{Proof idea: A maximal inequality for infinite $\sF$, Theorem \ref{thm_equicont}}

The details of the proof are given in Section \ref{sec_asymptotic_tightness_supp} in the Supplementary material. In the following, we abbreviate $\IH(\delta) = \IH(\delta,\sF, V_n)$ and $\N(\delta) = \N(\delta,\sF, V_n)$. Choose $\delta_0 = \sigma$ and $\delta_j = 2^{-j}\delta_0$.
    
For each $j\in\N_0$, we choose a covering by brackets $\sF_{jk} := [l_{jk},u_{jk}] \cap \sF$, $k = 1,...,N_j := \N(\delta_j)$ such that $V_n(u_{jk} - l_{jk}) \le \delta_j$ and $\sup_{f,g \in \sF_{jk}}|f-g| \le u_{jk} - l_{jk} =: \Delta_{jk}$. We may assume w.l.o.g. that $l_{jk}, u_{jk}, \Delta_{jk} \in \sF$ and that $(\sF_{jk})_k$ are nested.

In each $\sF_{jk}$, fix some $f_{jk} \in \sF$, and define $\pi_{j}f := f_{jk}$ and $\Delta_{j}f := \Delta_{jk}$. Put
    \[
        I(\sigma) := \int_0^{\sigma}\sqrt{\IH(\varepsilon,\sF, V_n)} d \varepsilon,\quad\quad  \tau := \min\Big\{j \ge 0: \delta_j \le \frac{I(\sigma)}{\sqrt{n}}\Big\} \vee 1.
    \]
    The chaining procedure is now applied with $m_j := \frac{1}{2}m(n,\delta_j, N_{j+1})$ ($m(\cdot)$ from \reff{lemma_chaining_definition_m}). Choose $M_n = \frac{1}{2}m_0$. We then have
    \begin{equation}
        \E \sup_{f\in \sF}\big|\G_n(f)\big| \le \E \sup_{f\in \sF(M_n)}\big|\G_n(f)\big| + \frac{1}{\sqrt{n}}\sum_{i=1}^{n}\E\big[W_i(F\Ii_{\{F > M_n\}})\big],\label{trunction_bound_paper}
    \end{equation}
    where $\sF(M_n) := \{\varphi_{M_n}^{\wedge}(f):f\in \sF\}$.
    
 By \reff{truncation_decomposition_eq2},  \reff{truncation_decomposition_eq3}, $|\G_n(f)| \le \G_n(g) + \frac{2}{\sqrt{n}}\sum_{i=1}^{n}\|W_i(g)\|_1 \le \G_n(g) + 2 \sqrt{n} \|g\|_{1,n}$ for $|f| \le g$, we obtain the decomposition
\begin{eqnarray}
    \sup_{f\in\sF(M_n)}|\G_n(f)| &\le&  \sup_{f\in\sF}|\G_n(\pi_0 f)|\nonumber\\
    &&\quad\quad + \Big\{\sup_{f\in\sF}|\G_n(\varphi_{m_\tau}^{\wedge}(\Delta_\tau f))| + 2\sqrt{n}\sup_{f\in \sF}\|\Delta_\tau f\|_{1,n}\Big\}\nonumber\\
    &&\quad\quad + \sum_{j=0}^{\tau-1}\sup_{f\in\sF}\Big|\G_n(\varphi_{m_j-m_{j+1}}^{\wedge}(\pi_{j+1}f - \pi_j f))\Big|\nonumber\\
    &&\quad\quad + \sum_{j=0}^{\tau-1}\Big\{\sup_{f\in\sF}\Big|\G_n(\min\big\{\big|\varphi_{m_{j+1}}^{\vee}(\Delta_{j+1}f)\big|,2m_j\big\})\Big|\nonumber\\
    &&\quad\quad\quad\quad\quad\quad\quad\quad\quad\quad+ 2\sqrt{n}\sup_{f\in\sF}\| \Delta_{j+1}f \Ii_{\{\Delta_{j+1}f > m_{j+1}\}}\|_{1,n}\Big\}\nonumber\\
    &&\quad\quad + \sum_{j=0}^{\tau-1}\Big\{\sup_{f\in\sF}\Big|\G_n(\min\big\{\big|\varphi_{m_j-m_{j+1}}^{\vee}(\Delta_{j}f)\big|,2m_j\big\})\Big|\nonumber\\
    &&\quad\quad\quad\quad\quad\quad\quad\quad\quad\quad + 2\sqrt{n}\sup_{f\in\sF}\| \Delta_{j}f \Ii_{\{\Delta_{j}f > m_j - m_{j+1}\}}\|_{1,n}\Big\}\nonumber\\
    &=:& R_1 + R_2 + R_3 + R_4 + R_5.\label{truncation_decomposition_final_paper}
\end{eqnarray}
The terms $R_i$, $i=1,...,5$ can now be discussed separately with Lemma \ref{lemma_chaining} and give the upper bounds (with constants $C > 0$): 
\[
    \IE R_1 \le C \delta_0 \sqrt{\IH(\delta_1)}, \quad \IE R_2 \le 2 I(\sigma) + C\delta_{\tau}\sqrt{\IH(\delta_{\tau+1})}, \quad \IE R_3, \IE R_4, \IE R_5 \le C \sum_{j=0}^{\tau}\delta_j \sqrt{\IH(\delta_{j+1})},
\]
and thus $\IE \sup_{f\in \sF(M_n)}|\G_n(f)| \le C'  I(\sigma)$. Together with \reff{trunction_bound_paper}, we result follows.

\bibliographystyle{plain}
\bibliography{reference}

\begin{thebibliography}{10}

\bibitem{adamczak}
Rados\l~aw Adamczak.
\newblock A tail inequality for suprema of unbounded empirical processes with
  applications to {M}arkov chains.
\newblock {\em Electron. J. Probab.}, 13:no. 34, 1000--1034, 2008.

\bibitem{pollard_overview}
Donald W.~K. Andrews and David Pollard.
\newblock An introduction to functional central limit theorems for dependent
  stochastic processes.
\newblock {\em International Statistical Review / Revue Internationale de
  Statistique}, 62(1):119--132, 1994.

\bibitem{arcones}
M.~A. Arcones and B.~Yu.
\newblock Central limit theorems for empirical and {$U$}-processes of
  stationary mixing sequences.
\newblock {\em J. Theoret. Probab.}, 7(1):47--71, 1994.

\bibitem{berkes09}
Istv\'{a}n Berkes, Siegfried H\"{o}rmann, and Johannes Schauer.
\newblock Asymptotic results for the empirical process of stationary sequences.
\newblock {\em Stochastic Process. Appl.}, 119(4):1298--1324, 2009.

\bibitem{borkar1993}
Vivek~S. Borkar.
\newblock White-noise representations in stochastic realization theory.
\newblock {\em SIAM J. Control Optim.}, 31(5):1093–1102, 1993.

\bibitem{dehling2001}
Svetlana Borovkova, Robert Burton, and Herold Dehling.
\newblock Limit theorems for functionals of mixing processes with applications
  to {$U$}-statistics and dimension estimation.
\newblock {\em Trans. Amer. Math. Soc.}, 353(11):4261--4318, 2001.

\bibitem{dahlhauspolonik}
Rainer Dahlhaus and Wolfgang Polonik.
\newblock Empirical spectral processes for locally stationary time series.
\newblock {\em Bernoulli}, 15(1):1--39, 2009.

\bibitem{dahlhaus2019}
Rainer Dahlhaus, Stefan Richter, and Wei~Biao Wu.
\newblock Towards a general theory for nonlinear locally stationary processes.
\newblock {\em Bernoulli}, 25(2):1013--1044, 2019.

\bibitem{dedecker10}
J.~Dedecker.
\newblock An empirical central limit theorem for intermittent maps.
\newblock {\em Probab. Theory Related Fields}, 148(1-2):177--195, 2010.

\bibitem{Dedeck02}
J\'{e}r\^{o}me Dedecker and Sana Louhichi.
\newblock Maximal inequalities and empirical central limit theorems.
\newblock In {\em Empirical process techniques for dependent data}, pages
  137--159. Birkh\"{a}user Boston, Boston, MA, 2002.

\bibitem{dedecker07}
J\'{e}r\^{o}me Dedecker and Cl\'{e}mentine Prieur.
\newblock An empirical central limit theorem for dependent sequences.
\newblock {\em Stochastic Process. Appl.}, 117(1):121--142, 2007.

\bibitem{dehling1}
Herold Dehling, Olivier Durieu, and Dalibor Volny.
\newblock New techniques for empirical processes of dependent data.
\newblock {\em Stochastic Process. Appl.}, 119(10):3699--3718, 2009.

\bibitem{donsker1952}
Monroe~D. Donsker.
\newblock Justification and extension of {D}oob's heuristic approach to the
  {K}omogorov-{S}mirnov theorems.
\newblock {\em Ann. Math. Statistics}, 23:277--281, 1952.

\bibitem{doukhan1995}
P.~Doukhan, P.~Massart, and E.~Rio.
\newblock Invariance principles for absolutely regular empirical processes.
\newblock {\em Ann. Inst. H. Poincar\'{e} Probab. Statist.}, 31(2):393--427,
  1995.

\bibitem{doukhan94}
Paul Doukhan.
\newblock {\em Mixing}, volume~85 of {\em Lecture Notes in Statistics}.
\newblock Springer-Verlag, New York, 1994.
\newblock Properties and examples.

\bibitem{doukhan07}
Paul Doukhan and Michael~H Neumann.
\newblock Probability and moment inequalities for sums of weakly dependent
  random variables, with applications.
\newblock {\em Stochastic Processes and their Applications}, 117(7):878--903,
  2007.

\bibitem{dudley1966}
R.~M. Dudley.
\newblock Weak convergences of probabilities on nonseparable metric spaces and
  empirical measures on {E}uclidean spaces.
\newblock {\em Illinois J. Math.}, 10:109--126, 1966.

\bibitem{dudley1978}
R.~M. Dudley.
\newblock Central limit theorems for empirical measures.
\newblock {\em Ann. Probab.}, 6(6):899--929 (1979), 1978.

\bibitem{dudley}
R.~M. Dudley.
\newblock {\em Uniform central limit theorems}, volume 142 of {\em Cambridge
  Studies in Advanced Mathematics}.
\newblock Cambridge University Press, New York, second edition, 2014.

\bibitem{durieu14}
Olivier Durieu and Marco Tusche.
\newblock An empirical process central limit theorem for multidimensional
  dependent data.
\newblock {\em J. Theoret. Probab.}, 27(1):249--277, 2014.

\bibitem{ellis}
Richard~S. Ellis and Aaron~D. Wyner.
\newblock Uniform large deviation property of the empirical process of a
  {M}arkov chain.
\newblock {\em Ann. Probab.}, 17(3):1147--1151, 1989.

\bibitem{mixing_garch}
Christian Francq and Jean-Michel Zako\"{\i}an.
\newblock Mixing properties of a general class of {GARCH}(1,1) models without
  moment assumptions on the observed process.
\newblock {\em Econometric Theory}, 22(5):815--834, 2006.

\bibitem{nickl}
Evarist Gin\'{e} and Richard Nickl.
\newblock {\em Mathematical foundations of infinite-dimensional statistical
  models}.
\newblock Cambridge Series in Statistical and Probabilistic Mathematics, [40].
  Cambridge University Press, New York, 2016.

\bibitem{hansen2008}
Bruce~E. Hansen.
\newblock Uniform convergence rates for kernel estimation with dependent data.
\newblock {\em Econometric Theory}, 24(3):726--748, 2008.

\bibitem{heinrich}
Lothar Heinrich.
\newblock Bounds for the absolute regularity coefficient of a stationary
  renewal process.
\newblock {\em Yokohama Math. J.}, 40(1):25--33, 1992.

\bibitem{tjost01}
Hans~Arnfinn Karlsen and Dag Tj\o~stheim.
\newblock Nonparametric estimation in null recurrent time series.
\newblock {\em Ann. Statist.}, 29(2):372--416, 2001.

\bibitem{kulik}
Rafa\l Kulik, Philippe Soulier, and Olivier Wintenberger.
\newblock The tail empirical process of regularly varying functions of
  geometrically ergodic {M}arkov chains.
\newblock {\em Stochastic Process. Appl.}, 129(11):4209--4238, 2019.

\bibitem{levental}
Shlomo Levental.
\newblock Uniform limit theorems for {H}arris recurrent {M}arkov chains.
\newblock {\em Probab. Theory Related Fields}, 80(1):101--118, 1988.

\bibitem{tjost16}
Degui Li, Dag Tj\o~stheim, and Jiti Gao.
\newblock Estimation in nonlinear regression with {H}arris recurrent {M}arkov
  chains.
\newblock {\em Ann. Statist.}, 44(5):1957--1987, 2016.

\bibitem{liebscher1996}
Eckhard Liebscher.
\newblock Strong convergence of sums of [alpha]-mixing random variables with
  applications to density estimation.
\newblock {\em Stochastic Processes and their Applications}, 65(1):69--80,
  1996.

\bibitem{mayer2019}
Ulrike Mayer, Henryk Zähle, and Zhou Zhou.
\newblock {Functional weak limit theorem for a local empirical process of
  non-stationary time series and its application}.
\newblock {\em Bernoulli}, 26(3):1891 -- 1911, 2020.

\bibitem{mt09}
Sean Meyn and Richard~L. Tweedie.
\newblock {\em Markov chains and stochastic stability}.
\newblock Cambridge University Press, Cambridge, second edition, 2009.
\newblock With a prologue by Peter W. Glynn.

\bibitem{mixing_arma}
Abdelkader Mokkadem.
\newblock Mixing properties of {ARMA} processes.
\newblock {\em Stochastic Process. Appl.}, 29(2):309--315, 1988.

\bibitem{ossiander}
Mina Ossiander.
\newblock A central limit theorem under metric entropy with {$L_2$} bracketing.
\newblock {\em Ann. Probab.}, 15(3):897--919, 1987.

\bibitem{pham_tran}
Tuan~D. Pham and Lanh~T. Tran.
\newblock Some mixing properties of time series models.
\newblock {\em Stochastic Process. Appl.}, 19(2):297--303, 1985.

\bibitem{pinelis1994}
Iosif Pinelis.
\newblock Optimum bounds for the distributions of martingales in {B}anach
  spaces.
\newblock {\em Ann. Probab.}, 22(4):1679--1706, 1994.

\bibitem{pollard1982}
David Pollard.
\newblock A central limit theorem for empirical processes.
\newblock {\em J. Austral. Math. Soc. Ser. A}, 33(2):235--248, 1982.

\bibitem{rio1995}
Emmanuel Rio.
\newblock The functional law of the iterated logarithm for stationary strongly
  mixing sequences.
\newblock {\em Ann. Probab.}, 23(3):1188--1203, 07 1995.

\bibitem{rio98}
Emmanuel Rio.
\newblock Processus empiriques absolument r\'{e}guliers et entropie
  universelle.
\newblock {\em Probab. Theory Related Fields}, 111(4):585--608, 1998.

\bibitem{rio17}
Emmanuel Rio.
\newblock {\em Asymptotic theory of weakly dependent random processes},
  volume~80 of {\em Probability Theory and Stochastic Modelling}.
\newblock Springer, Berlin, 2017.
\newblock Translated from the 2000 French edition [ MR2117923].

\bibitem{samur}
Jorge~D. Samur.
\newblock A regularity condition and a limit theorem for {H}arris ergodic
  {M}arkov chains.
\newblock {\em Stochastic Process. Appl.}, 111(2):207--235, 2004.

\bibitem{truquet}
Lionel Truquet.
\newblock A perturbation analysis of {M}arkov chains models with time-varying
  parameters.
\newblock {\em Bernoulli}, 26(4):2876--2906, 2020.

\bibitem{Vaart98}
A.~W. van~der Vaart.
\newblock {\em Asymptotic statistics}, volume~3 of {\em Cambridge Series in
  Statistical and Probabilistic Mathematics}.
\newblock Cambridge University Press, Cambridge, 1998.

\bibitem{wellner}
Aad~W. van~der Vaart and Jon~A. Wellner.
\newblock {\em Weak convergence and empirical processes}.
\newblock Springer Series in Statistics. Springer-Verlag, New York, 1996.
\newblock With applications to statistics.

\bibitem{vogt2012}
Michael Vogt.
\newblock Nonparametric regression for locally stationary time series.
\newblock {\em Ann. Statist.}, 40(5):2601--2633, 2012.

\bibitem{wu2005anotherlook}
Wei~Biao Wu.
\newblock Nonlinear system theory: another look at dependence.
\newblock {\em Proc. Natl. Acad. Sci. USA}, 102(40):14150--14154, 2005.

\bibitem{wu2008}
Wei~Biao Wu.
\newblock Empirical processes of stationary sequences.
\newblock {\em Statistica Sinica}, 18(1):313--333, 2008.

\bibitem{wu2011}
Wei~Biao Wu.
\newblock Asymptotic theory for stationary processes.
\newblock {\em Stat. Interface}, 4(2):207--226, 2011.

\bibitem{Wu13}
Wei~Biao Wu, Weidong Liu, and Han Xiao.
\newblock Probability and moment inequalities under dependence.
\newblock {\em Statist. Sinica}, 23(3):1257--1272, 2013.

\bibitem{yu_bin}
Bin Yu.
\newblock Rates of convergence for empirical processes of stationary mixing
  sequences.
\newblock {\em Ann. Probab.}, 22(1):94--116, 1994.

\bibitem{wuzhang2017}
Danna Zhang and Wei~Biao Wu.
\newblock Gaussian approximation for high dimensional time series.
\newblock {\em Ann. Statist.}, 45(5):1895--1919, 2017.

\end{thebibliography}

\newpage

\begin{center}
{\Large \bf Supplementary Material}\\
\end{center}

This material contains some details of the proofs in the paper as well as the proofs of the examples.

\subsection{Proofs of Section \ref{sec_maximal}}
\label{sec_maximal_supp}

\begin{lem} \label{depend_trans}
    Let Assumption \ref{ass1} hold for some $\nu\ge 2$. Then, 
    \begin{eqnarray*}
        \delta_{\nu}^{f(Z,u)}(k) &\le& |D_{f,n}(u)|\cdot \Delta(k),\\
        \sup_{i}\Big\|\sup_{f\in \sF}\big|f(Z_i,u) - f(Z_i^{*(i-j)},u)\big|\,\Big\|_{\nu} &\le& D_{n}^{\infty}(u)\cdot \Delta(k),\\
        \sup_{i}\|f(Z_i,u)\|_{\nu} &\le& |D_{f,n}(u)|\cdot C_{\Delta},
    \end{eqnarray*}
    where $C_{\Delta} := 4d\cdot|L_{\sF}|_1\cdot C_X^s C_R + C_{\bar f}$.
\end{lem}
\begin{proof}[Proof of Lemma \ref{depend_trans}]
We have for each $f\in \sF$ and $\nu \ge 2$ that
\begin{eqnarray*}
    && \sup_{i}\norm{\bar f(Z_i,u)-\bar f(Z_i^{*(i-k)},u)}_{\nu}\\
    &\leq& \sup_{i} \norm{|Z_i-Z_i^{*(i-k)}|_{L_{\sF,s}}^{s}\big(R(Z_i,u) + R(Z_i^{*(i-k)},u)\big)}_{\nu} \\
    &\le& \sup_{i} \norm{\big| Z_i-Z_i^{*(i-k)}\big|_{L_{\sF,s}}^{s}}_{\frac{p}{p-1}\nu}\norm{R(Z_i,u) + R(Z_i^{*(i-k)},u)}_{p\nu} \\
    &\le& \sup_{i}\norm{\sum_{j = 0}^\infty L_{\sF,j}\big|X_{i-j}-X_{i-j}^{*(i-k)}\big|_{\infty}^s }_{\frac{p}{p-1}\nu} \left(\norm{R(Z_i,u)}_{p\nu} + \norm{R(Z_i^{*(i-k)},u)}_{p\nu}\right) \\
    &\le& 2d C_R \sum_{j = 0}^kL_{\sF,j} (\delta_{\frac{p}{p-1}\nu s}^X(k-j))^s.
\end{eqnarray*}
This shows the first assertion. Due to
\[
    \sup_{f\in \sF}\big|\bar f(Z_i,u)-\bar f(Z_i^{*(i-k)},u)\big| \le |Z_i-Z_i^{*(i-k)}|_{L_{\sF,s}}^{s}\big(R(Z_i,u) + R(Z_i^{*(i-k)},u)\big),
\]
the second assertion follows similarly.
The last assertion follows from
\[
    |\bar f(z,u)| \le |\bar f(z,u) - \bar f(0,u)| + |\bar f(0,u)| \le |z|_{L_{\sF},s}^s\cdot (R(z,u) + R(0,u)) + |\bar f(0,u)|
\]
which implies
\begin{eqnarray*}
    \|\bar f(Z_i,u)\|_{\nu} &\le& \Big\|\sum_{j=0}^{\infty}L_{\sF,j}|Z_{i-j}|_{\infty}^s\Big\|_{\frac{p}{p-1}\nu}\big(\big\|R(Z_i,u)\big\|_{pq} + R(0,u)\big) + |\bar f(0,u)|\\
    &\le& 2d\cdot|L_{\sF}|_1\cdot C_X^s\cdot (C_R +  |R(0,u)|) + |\bar f(0,u)|\\
    &\le& 4d\cdot|L_{\sF}|_1\cdot C_X^s\cdot C_R + C_{\bar f}.
\end{eqnarray*}
\end{proof}

\begin{proof}[Proof of Theorem \ref{proposition_rosenthal_bound}]
Denote the three terms on the right hand side of \reff{rosenthal_main_decomposition} by $A_1,A_2,A_3$. We now discuss the three terms separately. First, we have
\[
        \E A_1 \le \sum_{j=q}^{\infty}\frac{1}{\sqrt{n}}\E \max_{f\in \sF}\Big|\sum_{i=1}^{n}(W_{i,j+1}(f) - W_{i,j}(f))\Big|.
    \]
    For fixed $j$, the sequence
    \begin{eqnarray*}
        E_{i,j} := (E_{i,j}(f))_{f\in \sF} &=& \big((W_{i,j+1}(f) - W_{i,j}(f))\big)_{f\in \sF}\\
        &=& (\E[W_i(f)|\varepsilon_{i-j},...,\varepsilon_i] - \E[W_i(f)|\varepsilon_{i-j+1},...,\varepsilon_i])_{f\in \sF}
    \end{eqnarray*}
    is a  $|\sF|$-dimensional martingale difference vector with respect to $\sG^{i} = \sigma(\varepsilon_{i-j},\varepsilon_{i-j+1},...)$. For a vector $x = (x_f)_{f\in \sF}$ and $s \ge 1$, write $|x|_s := (\sum_{f\in \sF}|x_f|^s)^{1/s}$. By Theorem 4.1 in \cite{pinelis1994} there exists an absolute constant $c_1 > 0$ such that for $s > 1$, 
    \begin{equation}
        \Big\| \Big|\sum_{i=1}^{n}E_{i,j}\Big|_s \Big\|_2 \le c_1\Big\{2\Big\|\sup_{i=1,...,n}|E_{i,j}|_s\Big\|_2 + \sqrt{2(s-1)}\Big\|\Big(\sum_{i=1}^{n}\E[|E_{i,j}|_s^2|\sG^{i-1}]\Big)^{1/2}\Big\|_2\Big\}.\label{prop_alternativebound_eq1}
    \end{equation}
    We have
    \[
        \Big\|\sup_{i=1,...,n}|E_{i,j}|_s\Big\|_2 = \Big\|\big(\sup_{i=1,...,n}|E_{i,j}|_s^2\big)^{1/2}\Big\|_2 \le \Big\|\big(\sum_{i=1}^{n}|E_{i,j}|_s^2\big)^{1/2}\Big\|_2,
    \]
    therefore both terms in \reff{prop_alternativebound_eq1} are of the same order and it is enough to bound the second term in \reff{prop_alternativebound_eq1}. We have
    \begin{eqnarray}
        \Big\|\Big(\sum_{i=1}^{n}\E[|E_{i,j}|_s^2|\sG^{i-1}]\Big)^{1/2}\Big\|_2 &=& \Big\|\sum_{i=1}^{n}\E[|E_{i,j}|_s^2|\sG^{i-1}]\Big\|_{1}^{1/2}\nonumber\\
        &\le& \Big(\sum_{i=1}^{n}\big\|\E[|E_{i,j}|_s^2|\sG^{i-1}]\big\|_{1}\Big)^{1/2}\nonumber\\
        &\le& \Big(\sum_{i=1}^{n}\big\||E_{i,j}|_s\big\|_{2}^2\Big)^{1/2}.\label{prop_alternativebound_eq2}
    \end{eqnarray}
    Note that
    \begin{eqnarray}
        E_{i,j}(f) &=& W_{i,j+1}(f) - W_{i,j}(f) = \E[W_i(f)|\varepsilon_{i-j},...,\varepsilon_i] - \E[W_i(f)|\varepsilon_{i-j+1},...,\varepsilon_i]\nonumber\\
        &=& \E[W_i(f)^{**(i-j)} - W_i(f)^{**(i-j+1)}|\sG_i],\label{prop_alternativebound_eq2_definitionstarstar}
    \end{eqnarray}
    where $H(\sF_i)^{**(i-j)} := H(\sF_i^{**(i-j)})$ and $\sF_i^{**(i-j)} = (\varepsilon_i,\varepsilon_{i-1},...,\varepsilon_{i-j},\varepsilon_{i-j-1}^{*},\varepsilon_{i-j-2}^{*},...)$.
    
    By Jensen's inequality, Lemma \ref{depend_trans} and the fact that $(W_i(f)^{**(i-j)},W_i(f)^{**(i-j+1)})$ has the same distribution as $(W_i(f),W_i(f)^{*(i-j)})$, 
    \begin{eqnarray}
    	\| |E_{i,j}|_s\big\|_2 &=& |\Big\|\Big(\sum_{f\in \sF}|E_{i,j}(f)|^s\Big)^{1/s}\Big\|_2\nonumber\\
    	&\le& s^{1/s} \Big\|\sup_{f\in \sF}\big|\E[W_i(f)^{**(i-j)} - W_i(f)^{**(i-j+1)}|\sG_i]\big|\Big\|_2\nonumber\\
    	&\le& e \cdot \Big\|\E\big[ \sup_{f\in \sF}\big|W_i(f)^{**(i-j)} - W_i(f)^{**(i-j+1)}\big| \, \big|\sG_i\big]\Big\|_2\nonumber\\
    	&\le& e \cdot \Big\| \sup_{f\in \sF}\big|W_i(f)^{**(i-j)} - W_i(f)^{**(i-j+1)}\big|\,\Big\|_2\nonumber\\
    	&=& e \cdot \Big\| \sup_{f\in \sF}\big|W_i(f) - W_i(f)^{*(i-j)}\big|\,\Big\|_2\nonumber\\
	    &\le& e\cdot D_{n}^{\infty}(\frac{i}{n})\Delta(j).\label{prop_alternativebound_eq3}
    \end{eqnarray}
    
    Inserting \reff{prop_alternativebound_eq3} into \reff{prop_alternativebound_eq2} delivers
    \[
        \Big(\sum_{i=1}^{n}\big\||E_{i,j}|_s\big\|_{2}^2\Big)^{1/2} \le  e\Big(\sum_{i = 1}^n D_{n}^{\infty}(\frac{i}{n})^2\Big)^{1/2} \Delta(j),
    \]
    Inserting this bound into \reff{prop_alternativebound_eq1}, we obtain
    \[
    	\Big\| \Big|\sum_{i=1}^{n}E_{i,j}\Big|_s \Big\|_2 \le 4e c_1s^{1/2}n^{1/2}\Big(\frac{1}{n}\sum_{i = 1}^n D_{n}^{\infty}(\frac{i}{n})^2\Big)^{1/2}\Delta(j).
    \]
    We conclude with $s := 2 \vee \log|\sF|$ that
    \begin{eqnarray}
    	\E A_1 &\le& \frac{1}{\sqrt{n}}\sum_{k=q}^{\infty}\Big\| \Big| \sum_{i=1}^{n}E_{i,j}\Big|_s \Big\|_2\nonumber\\
    	&\le& 4e c_1  \cdot \sqrt{2 \vee \log|\sF|}\cdot \Big(\frac{1}{n}\sum_{i = 1}^n D_{n}^{\infty}(\frac{i}{n})^2\Big)^{1/2}\sum_{j=q}^{\infty}\Delta_p(j)\nonumber\\
	&\le& 8e c_1 \cdot \sqrt{H}\cdot  \ID_{n}^{\infty}\beta(q).\label{proof_rosenthal_a1_final}
    \end{eqnarray}
    
We now discuss $\E A_2$. If $M_Q,\sigma_Q > 0$ are constants and $Q_i(f)$, $i=1,...,m$ mean-zero independent variables (depending on $f\in \sF$) with $|Q_i(f)| \le M_Q$, $(\frac{1}{m}\sum_{i=1}^{m}\|Q_i(f)\|_2^2)^{1/2} \le \sigma_Q$, then there exists some universal constant $c_2 > 0$ such that
\begin{equation}
    \E \max_{f\in \sF}\frac{1}{\sqrt{m}}\Big|\sum_{i=1}^{m}\big[Q_i(f) - \E Q_i(f)\big]\Big| \le c_2\cdot \Big(\sigma_Q\sqrt{H} + \frac{M_Q H}{\sqrt{m}}\Big),\label{dedecker_louichi_rosenthal_eq}
\end{equation}
(see e.g. \cite{Dedeck02} equation (4.3) in Section 4.1 therein).

Note that $(W_{k,j} - W_{k,j-1})_k$ is a martingale difference sequence and $W_{k,\tau_l} - W_{k,\tau_{l-1}} = \sum_{j=\tau_{l-1}+1}^{\tau_l}(W_{k,j} - W_{k,j-1})$. Furthermore, we have
\[
    \|W_{k,j} - W_{k,j-1}\|_2 \le \|W_k - \IE[W_k|\varepsilon_{k-j+1}]\|_2 \le \|W_k\|_2
\]
and
\begin{eqnarray*}
    \|W_{k,j} - W_{k,j-1}\|_2 &=& \|\IE[W_k^{**(k-j+1)} - W_k^{**(k-j+2)}|\sG_k]\|_2\\
    &\le& \|W_k^{**(k-j+1)} - W_k^{**(k-j+2)}\|_2\\
    &=& \|W_k - W_k^{*(k-j+1)}\|_2 = \delta_2^{W_k}(j-1),
\end{eqnarray*}
thus
\begin{eqnarray*}
    \|W_{k,j} - W_{k,j-1}\|_2 \le \min\{\|W_k\|_2,\delta^{W_k}_2(j-1)\}.
\end{eqnarray*}
We conclude with the elementary inequality $\min\{a_1,b_1\} + \min\{a_2,b_2\} \le \min\{a_1 + a_2, b_1 + b_2\}$ that
\begin{eqnarray*}
    \|T_{i,l}\|_2 &=& \Big\|\sum_{k=(i-1)\tau_l+1}^{(i\tau_l)\wedge n}(W_{k,\tau_l} - W_{k,\tau_{l-1}})\Big\|_2\\
    &=& \Big\|\sum_{j=\tau_{l-1}+1}^{\tau_l} \sum_{k=(i-1)\tau_l+1}^{(i\tau_l)\wedge n}(W_{k,j} - W_{k,j-1})\Big\|_2\\
    &\le& \sum_{j=\tau_{l-1}+1}^{\tau_l} \Big\|\sum_{k=(i-1)\tau_l+1}^{(i\tau_l)\wedge n}(W_{k,j} - W_{k,j-1})\Big\|_2\\
    &\le& \sum_{j=\tau_{l-1}+1}^{\tau_l} \Big(\sum_{k=(i-1)\tau_l+1}^{(i\tau_l)\wedge n}\|W_{k,j} - W_{k,j-1}\|_2^2\Big)^{1/2} \\ 
    &\le& \sum_{j=\tau_{l-1}+1}^{\tau_l} \min \Big\{  \Big(\sum_{k=(i-1)\tau_l+1}^{(i\tau_l)\wedge n}\|W_k\|_2^2\Big)^{1/2},\Big(\sum_{k=(i-1)\tau_l+1}^{(i\tau_l)}(\delta_2^{W_k}(j-1))^2 \Big)^{1/2}\Big\}.
\end{eqnarray*}
Put
\[
    \sigma_{i,l} := \Big(\frac{1}{\tau_l}\sum_{k=(i-1)\tau_l+1}^{(i\tau_l)\wedge n} \|W_k\|_2^2\Big)^{1/2}, \quad\quad \Delta_{i,j,l} := \Big(\frac{1}{\tau_l}\sum_{k=(i-1)\tau_l+1}^{(i\tau_l)\wedge n} \delta_2^{W_k}(j-1)^2\Big)^{1/2}.
\]
Then
\begin{eqnarray}
    && \Big(\frac{1}{\frac{n}{\tau_l}} \underset{i \text{ even}}{\sum_{i=1}^{\lfloor \frac{n}{\tau_l}\rfloor + 1}}\frac{1}{\tau_l}\norm{T_{i,l}(f)}_{2}^2 \Big)^{1/2}\nonumber\\
    &\le& \Big(\frac{1}{\frac{n}{\tau_l}}  \sum_{i=1}^{\lfloor \frac{n}{\tau_l}\rfloor + 1}   \Big(\sum_{j=\tau_{l-1}+1}^{\tau_l} \min \Big\{  \Big(\frac{1}{\tau_l}\sum_{k=(i-1)\tau_l+1}^{(i\tau_l)\wedge n} \|W_k\|_2^2\Big)^{1/2},\nonumber\\
    &&\quad\quad\quad\quad\quad\quad\quad\quad\quad\quad\quad\quad\quad\quad\quad\Big(\frac{1}{\tau_l}\sum_{k=(i-1)\tau_l+1}^{(i\tau_l)\wedge n} \delta_2^{W_k}(j-1)^2\Big)^{1/2}\Big\} \Big)^2 \Big)^{1/2} \nonumber\\
    &=& \Big(\frac{1}{\frac{n}{\tau_l}}  \sum_{i=1}^{\lfloor \frac{n}{\tau_l}\rfloor + 1}  \Big( \big(\tau_l - \tau_{l-1}\big)^2 \min\{  \sigma_i^2,\Delta_{i,\tau_{l-1}+1,l}^2\} \Big)^{1/2} \nonumber\\
    &=& \Big(\frac{1}{\frac{n}{\tau_l}}  \sum_{i=1}^{\lfloor \frac{n}{\tau_l}\rfloor + 1}  \big(\tau_l - \tau_{l-1}\big)^2 \min\{\sigma_{i,l}^2,\Delta_{i,\tau_{l-1}+1,l}^2\}\Big)^{1/2} \nonumber\\
    &\le& \big(\tau_l - \tau_{l-1}\big)\cdot \Big(\min\{\frac{1}{\frac{n}{\tau_l}}\sum_{i=1}^{\lfloor \frac{n}{\tau_l}\rfloor + 1}\sigma_{i,l}^2,\frac{1}{\frac{n}{\tau_l}}\sum_{i=1}^{\lfloor \frac{n}{\tau_l}\rfloor + 1}\Delta_{i,\tau_{l-1}+1,l}^2\}\Big)^{1/2} \nonumber\\
    &\le& \sum_{j=\tau_{l-1}+1}^{\tau_l} \min\{\Big(\frac{1}{\frac{n}{\tau_l}}\sum_{i=1}^{\lfloor \frac{n}{\tau_l}\rfloor + 1}\sigma_{i,l}^2\Big)^{1/2},\Big(\frac{1}{\frac{n}{\tau_l}}\sum_{i=1}^{\lfloor \frac{n}{\tau_l}\rfloor + 1}\Delta_{i,\tau_{l-1}+1,l}^2\Big)^{1/2}\} \nonumber\\ 
    &\le& \sum_{j=\tau_{l-1}+1}^{\tau_l} \min\{\|f\|_{2,n},\Big(\frac{1}{n}\sum_{i=1}^{n}\delta_2^{W_i}(\tau_{l-1})^2\Big)^{1/2}\}\nonumber\\
    &\le& \sum_{j=\tau_{l-1}+1}^{\tau_l} \min\{\|f\|_{2,n},\ID_{n}\Delta(\lfloor\frac{j}{2}\rfloor )\}\label{proof_rosenthal_a2_variancebound}
\end{eqnarray}
With $\frac{1}{\sqrt{\tau_l}}\big|T_{i,l}(f) \big| \leq 2\sqrt{\tau_l}\|f\|_\infty \le 2\sqrt{\tau_l}M$ and \reff{dedecker_louichi_rosenthal_eq}, we obtain
\begin{eqnarray*}
    \sum_{l=1}^{L}\Big[\IE\max_{f\in \sF}\Big|\frac{1}{\sqrt{\frac{n}{\tau_l}}}\underset{i \text{ even}}{\sum_{i=1}^{\lfloor \frac{n}{\tau_l}\rfloor + 1}}\frac{1}{\sqrt{\tau_l}}T_{i,l}(f)\Big|\Big] &\le& c_2\sum_{l=1}^{L} \Big[\sup_{f}\Big(\frac{1}{\frac{n}{\tau_l}} \underset{i \text{ even}}{\sum_{i=1}^{\lfloor \frac{n}{\tau_l}\rfloor + 1}}\norm{\frac{1}{\sqrt{\tau_l}}T_{i,l}(f)}_{2}^2 \Big)^{1/2} \sqrt{H} + \frac{2\sqrt{\tau_l}MH}{\sqrt{\frac{n}{\tau_l}}}\Big],
\end{eqnarray*}
and a similar assertion for the second term ($i$ odd) in $A_2$. With \reff{proof_rosenthal_a2_variancebound}, we conclude that
\begin{eqnarray}
    \E A_2 &\le& \sum_{l=1}^{L}\Big[\E \max_{f\in \sF}\frac{1}{\sqrt{\frac{n}{\tau_l}}}\Big|\sum_{1\le i \le \lfloor  \frac{n}{\tau_l}\rfloor+1, i \text{ odd}}\frac{1}{\sqrt{\tau_l}}T_{i,l}(f)\Big|\nonumber\\
    &&\quad\quad\quad\quad\quad\quad + \E \max_{f\in \sF}\frac{1}{\sqrt{\frac{n}{\tau_l}}}\Big|\sum_{1\le i \le \lfloor \frac{n}{\tau_l}\rfloor+1, i \text{ even}}\frac{1}{\sqrt{\tau_l}}T_{i,l}(f)\Big|\Big]\nonumber\\
    &\le& 4c_2\sum_{l=1}^{L}\Big[\Big(\sum_{j=\tau_{l-1}+1}^{\tau_l} \min\{\max_{f\in\sF}\|f\|_{2,n},\ID_{n}\Delta(\lfloor\frac{j}{2}\rfloor )\}\Big)\cdot \sqrt{H} + \frac{\sqrt{\tau_l} M H}{\sqrt{\lfloor \frac{n}{\tau_l}\rfloor+1}}\Big].\label{proof_rosenthal_a2_eq1}
\end{eqnarray}
Note that
\begin{equation}
    \sum_{l=1}^{L}\frac{\sqrt{\tau_l}}{\sqrt{\lfloor \frac{n}{\tau_l}\rfloor+1}} \le \sum_{l=1}^{L}\frac{\sqrt{\tau_l}}{\sqrt{\frac{n}{\tau_l}}} = \frac{1}{\sqrt{n}}\sum_{l=0}^{L}\tau_l = \frac{1}{\sqrt{n}}\sum_{l=1}^{L-1}2^l \le \frac{1}{\sqrt{n}}(2^L + q) \le \frac{2q}{\sqrt{n}}.\label{proof_rosenthal_a2_eq2}
\end{equation}
Furthermore, we have by Lemma \ref{lemma_vfactor_negligible} that
\begin{eqnarray}
    \sum_{l=1}^{L}\sum_{j=\tau_{l-1}+1}^{\tau_l} \min\{\max_{f\in\sF}\|f\|_{2,n},\ID_{n}\Delta(\lfloor\frac{j}{2}\rfloor )\} &\le& \sum_{j = 2}^\infty \min\{\max_{f\in\sF}\|f\|_{2,n},\ID_{n}\Delta(\lfloor\frac{j}{2}\rfloor )\}\nonumber\\
    &\le& 2\bar V_n(\max_{f\in\sF}\|f\|_{2,n})\nonumber\\
    &=& 2\max_{f\in\sF}\bar V_n(\|f\|_{2,n}) = 2\max_{f\in\sF}V_n(f),\label{proof_rosenthal_a2_eq3}
\end{eqnarray}
where
\begin{equation}
    \bar V_n(x) = x +  \sum_{j=1}^{\infty}\min\{x,\ID_n \Delta(j)\}\label{definition_vbar}
\end{equation}
and the second to last equality holds since $x \mapsto \bar V_n(x)$ is increasing.

Inserting \reff{proof_rosenthal_a2_eq2} and \reff{proof_rosenthal_a2_eq3} into \reff{proof_rosenthal_a2_eq1}, we conclude that with some universal $c_3 > 0$, 
\begin{equation}
    \E A_2 \le c_3\Big( \sup_{f\in \sF}V_n(f) \sqrt{H} + \frac{q MH}{\sqrt{n}}\Big) \le c_2\Big(\sigma \sqrt{H} + \frac{q M H}{\sqrt{n}}\Big).\label{proof_rosenthal_a2_final}
\end{equation}

Since $S_{n,1}^{W} = \sum_{i=1}^{n}W_{i,1}(f)$ is a sum of independent variables with $|W_{i,1}(f)| \le \|f\|_{\infty} \le M$ and $\|W_{i,0}(f)\|_2 \le 2\|f\|_2 \le 2V_n(f) \le 2\sigma$,
we obtain from \reff{dedecker_louichi_rosenthal_eq} again
\begin{equation}
    \E A_3 \le c_2\Big(\sigma \sqrt{H} + \frac{M H}{\sqrt{n}}\Big).\label{proof_rosenthal_a3_final}
\end{equation}
If we insert the bounds \reff{proof_rosenthal_a1_final}, \reff{proof_rosenthal_a2_final} and \reff{proof_rosenthal_a3_final}  into \reff{rosenthal_main_decomposition}, we obtain the result \reff{proposition_rosenthal_bound_result1}.

We now show \reff{proposition_rosenthal_bound_result2}. If  $q^{*}(\frac{M\sqrt{H}}{\sqrt{n}\ID_n^{\infty}})\frac{H}{n} \le 1$, we have $q^{*}(\frac{M\sqrt{H}}{\sqrt{n}\ID_n^{\infty}}) \in \{1,...,n\}$ and thus by \reff{proposition_rosenthal_bound_result1}:
    \begin{eqnarray}
        \E \max_{f\in \sF}\Big|\frac{1}{\sqrt{n}}S_n(f)\Big| &\le& c\Big(\sqrt{H}\ID_n^{\infty} \beta\Big(q^{*}\Big(\frac{M\sqrt{H}}{\sqrt{n}\ID_n^{\infty}}\Big)\Big) + q^{*}\Big(\frac{M\sqrt{H}}{\sqrt{n}\ID_n^{\infty}}\Big)\frac{MH}{\sqrt{n}} + \sigma \sqrt{H}\Big)\nonumber\\
        &\le& 2c\Big(q^{*}\Big(\frac{M\sqrt{H}}{\sqrt{n}\ID_n^{\infty}}\Big)\frac{MH}{\sqrt{n}} + \sigma\sqrt{H}\Big)\nonumber\\
        &=& 2c\Big(\sqrt{n}M\cdot \min\Big\{q^{*}\Big(\frac{M\sqrt{H}}{\sqrt{n}\ID_n^{\infty}}\Big)\frac{H}{n},1\Big\} + \sigma \sqrt{H}\Big).\label{proposition_rosenthal_bound_result2_eq1}
    \end{eqnarray}
    If $q^{*}(\frac{M\sqrt{H}}{\sqrt{n}\ID_n^{\infty}})\frac{H}{n} \ge 1$, we note that the simple bound
    \begin{eqnarray}
        \E \max_{f\in \sF}\Big|\frac{1}{\sqrt{n}}S_n(f)\Big| &\le& 2\sqrt{n}M\nonumber\\
        &\le& 2c\Big( \sqrt{n}M \min\Big\{q^{*}\Big(\frac{M\sqrt{H}}{\sqrt{n}\ID_n^{\infty}}\Big)\frac{H}{n},1\Big\} + \sigma\sqrt{H}\Big)\label{proposition_rosenthal_bound_result2_eq2}
    \end{eqnarray}
    holds. Putting the two bounds \reff{proposition_rosenthal_bound_result2_eq1} and \reff{proposition_rosenthal_bound_result2_eq2} together, we obtain the result \reff{proposition_rosenthal_bound_result2}.

\end{proof}

\begin{lem}\label{lemma_vfactor_negligible}
    Let $\omega(k)$ be an increasing sequence in $k$. Then, for any $x > 0$,
    \[
        \sum_{j = 2}^\infty \min\{x,\ID_{n} \Delta(\lfloor \frac{j}{2}\rfloor)\}\omega(j) \le 2 \sum_{j = 1}^\infty \min\{x,\ID_{n} \Delta(j)\}\omega(2j+1).
    \]
    Especially in the case $\omega(k) = 1$,
    \[
        \sum_{j = 2}^\infty \min\{x,\ID_{n} \Delta(\lfloor \frac{j}{2}\rfloor)\} \le 2 \sum_{j = 1}^\infty \min\{x,\ID_{n} \Delta(j)\}.
    \]
\end{lem}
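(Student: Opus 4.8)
The plan is to reorganize the left-hand sum by grouping terms according to the value of $\lfloor j/2\rfloor$. The key combinatorial observation is that the map $j \mapsto \lfloor j/2\rfloor$, restricted to $j \ge 2$, is exactly two-to-one: for each $k\in\N$, precisely the two indices $j = 2k$ and $j = 2k+1$ satisfy $\lfloor j/2\rfloor = k$. First I would use this to rewrite the left-hand side as
\[
    \sum_{j=2}^\infty \min\{x,\ID_n\Delta(\lfloor j/2\rfloor)\}\omega(j) = \sum_{k=1}^\infty \min\{x,\ID_n\Delta(k)\}\big(\omega(2k)+\omega(2k+1)\big).
\]
Since every summand is nonnegative, this is a valid rearrangement of the series regardless of whether it converges, so no analytic subtlety arises at this step.

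Next, I would exploit that $\omega$ is increasing: for every $k$ we have $\omega(2k) \le \omega(2k+1)$, hence $\omega(2k)+\omega(2k+1) \le 2\,\omega(2k+1)$. Inserting this termwise bound into the grouped sum yields
\[
    \sum_{k=1}^\infty \min\{x,\ID_n\Delta(k)\}\big(\omega(2k)+\omega(2k+1)\big) \le 2\sum_{k=1}^\infty \min\{x,\ID_n\Delta(k)\}\,\omega(2k+1),
\]
which, after renaming the summation index $k$ to $j$, is precisely the claimed inequality. For the special case $\omega\equiv 1$, the grouping step gives $\omega(2k)+\omega(2k+1) = 2$ for all $k$, so the same computation produces the stated (in fact sharp) bound $\sum_{j=2}^\infty \min\{x,\ID_n\Delta(\lfloor j/2\rfloor)\} = 2\sum_{k=1}^\infty \min\{x,\ID_n\Delta(k)\}$.

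The argument is entirely elementary, and I do not anticipate any real obstacle. The only two points deserving a line of care are (i) correctly identifying the two-to-one structure of $j\mapsto\lfloor j/2\rfloor$ so that the reindexing is exact rather than off by a boundary term, and (ii) noting the nonnegativity of all summands, which legitimizes the rearrangement of the (possibly divergent) series. Monotonicity of $\omega$ is the only structural hypothesis used, and it enters exactly once, in the passage from $\omega(2k)+\omega(2k+1)$ to $2\,\omega(2k+1)$.
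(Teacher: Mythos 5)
Your proof is correct and is essentially identical to the paper's: the paper also splits the left-hand sum into even indices $j=2k$ and odd indices $j=2k+1$ (the same two-to-one grouping under $j\mapsto\lfloor j/2\rfloor$), obtains $\sum_{k\ge 1}\min\{x,\ID_n\Delta(k)\}\{\omega(2k)+\omega(2k+1)\}$, and bounds this by $2\sum_{k\ge 1}\min\{x,\ID_n\Delta(k)\}\omega(2k+1)$ using monotonicity of $\omega$. Your added remarks on nonnegativity justifying the rearrangement and on sharpness in the case $\omega\equiv 1$ are accurate refinements of the same argument.
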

\begin{proof}[Proof of Lemma \ref{lemma_vfactor_negligible}]
    It holds that
    \begin{eqnarray*}
        &&\sum_{j = 2}^\infty \min\{x,\ID_{n} \Delta(\lfloor \frac{j}{2}\rfloor)\}\omega(j)\\
        &=& \sum_{k = 1}^\infty \min\{x,\ID_{n} \Delta(\lfloor \frac{2k}{2}\rfloor)\}\omega(2k) + \sum_{k = 1}^\infty \min\{x,\ID_{n} \Delta(\lfloor \frac{2k+1}{2}\rfloor)\}\omega(2k+1)\\
        &=& \sum_{k = 1}^\infty \min\{x,\ID_{n} \Delta(k)\}\cdot \{\omega(2k) + \omega(2k+1)\}\\
        &\le& 2\sum_{k = 1}^\infty \min\{x,\ID_{n} \Delta(k)\}\cdot \omega(2k+1).
    \end{eqnarray*}
\end{proof}

\begin{proof}[Proof of Corollary \ref{proposition_rosenthal_bound_rates}]
Let $\sigma := \sup_{n\in\N}\sup_{f\in\sF}V_n(f) < \infty$. For $Q \ge 1$, define
\[
    M_n = \frac{\sqrt{n}}{\sqrt{H}}r(\frac{\sigma Q^{1/2}}{\ID_n^{\infty}})\ID_n^{\infty}.
\]
Let $\bar F = \sup_{f\in\sF}\bar f$, and $F(z,u) = D_{n}^{\infty}(u)\cdot \bar F(z,u)$. Then $F$ is an envelope function of $\sF$. We furthermore have
\begin{equation}
    \IP(\sup_{i=1,...,n} F(Z_i,\frac{i}{n}) > M_n) \le \IP\Big( \big(\frac{1}{n}\sum_{i=1}^{n}F(Z_i,\frac{i}{n})^{\nu}\big)^{1/\nu} > \frac{M_n}{n^{1/\nu}}\Big) \le  \frac{n}{M_n^{\nu}}\cdot \|F\|_{\nu,n}^{\nu}.\label{proposition_rosenthal_bound_prob1}
\end{equation}
Inserting the bound
\[
    \|F\|_{\nu,n}^{\nu} = \frac{1}{n}\sum_{i=1}^{n}D_{n}^{\infty}(\frac{i}{n})^{\nu}\|\bar F(Z_i,\frac{i}{n})\|_{\nu}^{\nu} \le C_{\Delta}^{\nu}\cdot \frac{1}{n}\sum_{i=1}^{n}D_{n}^{\infty}(\frac{i}{n})^{\nu} \le C_{\Delta}^{\nu}\cdot (\ID_{\nu,n}^{\infty})^{\nu}
\]
into \reff{proposition_rosenthal_bound_prob1} and using $r(\gamma a) \ge \gamma r(a)$ for $\gamma \ge 1, a > 0$ (this is similarly proven as for Lemma \ref{lemma_chaining_i}), we obtain
\begin{eqnarray}
    \IP(\sup_{i=1,...,n} F(Z_i,\frac{i}{n}) > M_n) &\le& \Big(\frac{H}{n^{1-\frac{2}{\nu}} r(\frac{\sigma Q^{1/2}}{\ID_n^{\infty}})^2}\Big)^{\nu/2}\cdot \Big(\frac{C_{\Delta} \ID_{\nu,n}^{\infty}}{\ID_n^{\infty}}\Big)^{\nu}\nonumber\\
    &\le& \frac{1}{Q^{\nu/2}}\Big(\frac{H}{n^{1-\frac{2}{\nu}} r(\frac{\sigma}{\ID_n^{\infty}})^2}\Big)^{\nu/2}\cdot \Big(\frac{C_{\Delta} \ID_{\nu,n}^{\infty}}{\ID_n^{\infty}}\Big)^{\nu}.\label{proposition_rosenthal_bound_prob1final}
\end{eqnarray}

Using the rough bound $\|f\|_{\nu,n} \le \|F\|_{\nu,n}$ and $r(a) \le a$ for $a > 0$ from Lemma \ref{lemma_chaining_i}, we obtain
\begin{eqnarray}
    \max_{f\in \sF}\frac{1}{\sqrt{n}}\sum_{i=1}^{n}\IE[f(Z_i,\frac{i}{n})\Ii_{\{|f(Z_i,\frac{i}{n})| > M_n\}}] &\le& \frac{1}{\sqrt{n}M_n^{\nu-1}}\max_{f\in \sF}\sum_{i=1}^{n}\IE[ |f(Z_i,\frac{i}{n})|^{\nu}]\nonumber\\
    &\le& \frac{n}{M_n^{\nu}}\cdot \frac{M_n}{\sqrt{n}}\max_{f\in\sF}\|f\|_{\nu,n}^{\nu}\nonumber\\
    &\le& \Big(\frac{C_{\Delta}^2H}{n^{1-\frac{2}{\nu}}r(\frac{\sigma Q^{1/2}}{\ID_n^{\infty}})^2}\Big)^{\nu/2}\cdot \frac{\sigma Q^{1/2}}{\sqrt{H}}\cdot \Big(\frac{\ID_{\nu,n}^{\infty}}{\ID_n^{\infty}}\Big)^{\nu}\nonumber\\
    &\le& \frac{\sigma}{Q^{\frac{\nu-2}{2}}\sqrt{H}}\Big(\frac{C_{\Delta}^2 H}{n^{1-\frac{2}{\nu}}r(\frac{\sigma}{\ID_n^{\infty}})^2}\Big)^{\nu/2}\cdot \Big(\frac{\ID_{\nu,n}^{\infty}}{\ID_n^{\infty}}\Big)^{\nu}.\label{proposition_rosenthal_bound_prob2final}
\end{eqnarray}
Abbreviate
\[
    C_n := \Big(\frac{C_{\Delta}^2 H}{n^{1-\frac{2}{\nu}}r(\frac{\sigma}{\ID_n^{\infty}})^2}\Big)^{\nu/2}\cdot \Big(\frac{\ID_{\nu,n}^{\infty}}{\ID_n^{\infty}}\Big)^{\nu}.
\]
By assumption, $\sup_{n\in\N}C_n < \infty$. By Theorem \ref{proposition_rosenthal_bound}, \reff{proposition_rosenthal_bound_prob1final} and \reff{proposition_rosenthal_bound_prob2final},
\begin{eqnarray*}
    &&\IP\Big(\max_{f\in \sF}\big|\G_n(f)\big| > Q\sqrt{H}\Big)\\
    &\le& \IP\Big(\max_{f\in \sF}\big|\G_n(f)\big| > Q\sqrt{H}, \sup_{i=1,...,n}\bar F(Z_i,\frac{i}{n}) \le M\Big)\\
    &&\quad\quad\quad + \IP(\sup_{i=1,...,n} F(Z_i,\frac{i}{n}) > M)\\
    &\le& \IP\Big(\max_{f\in \sF}\big|\G_n(\max\{\min\{f,M\},-M\})\big| > Q\sqrt{H}/2\Big)\\
    &&\quad\quad + \IP\Big(\max_{f\in \sF}\big|\frac{1}{\sqrt{n}}\sum_{i=1}^{n}\IE[f(Z_i,\frac{i}{n})\Ii_{\{|f(Z_i,\frac{i}{n})| > M\}}] > Q\sqrt{H}/2\Big)\\
    &&\quad\quad\quad + \IP(\sup_{i=1,...,n}F(Z_i,\frac{i}{n}) > M)\\
    &\le& \frac{2c}{Q\sqrt{H}}\Big[ \sigma\sqrt{H} + q^{*}\Big(r(\frac{\sigma Q^{1/2}}{\ID_n^{\infty}})\Big)r(\frac{\sigma Q^{1/2}}{\ID_n^{\infty}})\ID_n^{\infty}\Big] + \Big(\frac{1}{Q^{\frac{\nu}{2}}} + \frac{2\sigma}{Q^{\frac{\nu}{2}}H}\Big)C_n\\
    &\le& \frac{4c\sigma}{Q^{1/2}} + \Big(\frac{1}{Q^{\frac{\nu}{2}}} + \frac{2\sigma}{Q^{\frac{\nu}{2}}H}\Big)C_n.
\end{eqnarray*}
Since $\sup_{n\in\N}C_n < \infty$ and $\sigma$ is independent of $n$, the assertion follows for $Q \to \infty$.
\end{proof}

\subsection{Proofs of Section \ref{sec_asymptotic_tightness}}
\label{sec_asymptotic_tightness_supp}

\begin{proof}[Proof of Theorem \ref{thm_equicont}]
    In the following, we abbreviate $\IH(\delta) = \IH(\delta,\sF, V_n)$ and $\N(\delta) = \N(\delta,\sF, V_n)$. Choose $\delta_0 = \sigma$ and $\delta_j = 2^{-j}\delta_0$.
    
    For each $j\in\N_0$, we choose a covering by brackets $\sF_{jk}^{pre} := [l_{jk},u_{jk}] \cap \sF$, $k = 1,...,\N(\delta_j)$ such that $V_n(u_{jk} - l_{jk}) \le \delta_j$ and $\sup_{f,g \in \sF_{jk}}|f-g| \le u_{jk} - l_{jk} =: \Delta_{jk}$. We may assume w.l.o.g. that $l_{jk}, u_{jk}, \Delta_{jk} \in \sF$.
    
    If $l_{jk},u_{jk}$ do not belong to $\sF$, we can simply define new brackets by
    \[
        \tilde l_{jk}(z,u) := \inf_{f\in [l_{jk},u_{jk}]}f(z,u), \quad\quad \tilde u_{jk}(z,u) := \sup_{f\in [l_{jk},u_{jk}]}f(z,u)
    \]
    which fulfill $[l_{jk},u_{jk}] \cap \sF = [\tilde l_{jk}, \tilde u_{jk}]\cap \sF$, and
    \[
        |\tilde l_{jk}(z,u) - \tilde l_{jk}(z',u)| \le \sup_{f\in [l_{jk},u_{jk}]}|f(z,u) - f(z',u)|.
    \]
   Thus, we can add $\tilde l_{jk}, \tilde u_{jk}$ to $\sF$ without changing the bracketing numbers $\N(\varepsilon,\sF,\|\cdot\|)$ and the validity of Assumption \ref{ass1}.
    
    We now construct inductively a new nested sequence of partitions $(\sF_{jk})_k$ of $\sF$ from $(\sF_{jk}^{pre})_k$ in the following way: For each fixed $j \in\N_0$, put
    \[
        \{\sF_{jk}:k\} := \{\bigcap_{i=0}^{j}\sF_{ik_i}^{pre}:k_i \in \{1,...,\N(\delta_i)\}, i \in \{0,...,j\}\}
    \]
    as the intersections of all previous partitions and the $j$-th partition. Then $|\{\sF_{jk}:k\}|\le N_j := \N(\delta_0)\cdot ... \cdot \N(\delta_j)$. By monotonicity of $V_n$, we have
    \[
        \sup_{f,g\in \sF_{jk}}|f-g| \le \Delta_{jk}, \quad\quad V_n(\Delta_{jk}) \le \delta_j.
    \]
    
    In each $\sF_{jk}$, fix some $f_{jk} \in \sF$, and define $\pi_{j}f := f_{j,\psi_j f}$ where $\psi_{j}f := \min\{i \in \{1,...,N_j\}: f \in \sF_{ji}\}$. Put $\Delta_{j}f := \Delta_{j,\psi_j f}$ and
    \[
        I(\sigma) := \int_0^{\sigma}\sqrt{1 \vee \IH(\varepsilon,\sF, V_n)} d \varepsilon,
    \]
    we set
    \begin{equation}
        \tau := \min\Big\{j \ge 0: \delta_j \le \frac{I(\sigma)}{\sqrt{n}}\Big\} \vee 1.\label{definition_stoppingtime_chaining}
    \end{equation}
    Put
    \[
        m_j := \frac{1}{2}m(n,\delta_j, N_{j+1}),
    \]
    ($m(\cdot)$ from Lemma \ref{lemma_chaining}). Choose $M_n = \frac{1}{2}m_0$. We then have
    \[
        \E \sup_{f\in \sF}\big|\G_n(f)\big| \le \E \sup_{f\in \sF(M_n)}\big|\G_n(f)\big| + \frac{1}{\sqrt{n}}\sum_{i=1}^{n}\E\big[W_i(F\Ii_{\{F > M_n\}})\big],
    \]
    where $\sF(M_n) := \{\varphi_{M_n}^{\wedge}(f):f\in \sF\}$. Due to Lemma \ref{lemma_properties_truncationfunction}(iii), $\sF(M_n)$ still fulfills Assumption \ref{ass1}.

    Since $|f| \le g$ implies $|W_i(f)| \le W_i(g)$ and $\|W_i(g)\|_1 \le \|g(Z_i,\frac{i}{n})\|_1$, it holds that
\begin{eqnarray*}
    |\G_n(f)| &\le& \frac{1}{\sqrt{n}}\sum_{i=1}^{n}\big|W_i(f) - \IE W_i(f)\big|\\
    &\le& \G_n(g) + \frac{2}{\sqrt{n}}\sum_{i=1}^{n}\|W_i(g)\|_1 \le \G_n(g) + 2 \sqrt{n} \|g\|_{1,n}.
\end{eqnarray*}
 By \reff{truncation_decomposition_eq2} and \reff{truncation_decomposition_eq3} and the fact that $\|f - \pi_0 f\|_{\infty} \le 2M_n \le m_0$, we have the decomposition
\begin{eqnarray}
    \sup_{f\in\sF}|\G_n(f)| &\le& \sup_{f\in\sF}|\G_n(\pi_0 f)|\nonumber\\
    &&\quad\quad + \sup_{f\in \sF}|\G_n(\varphi_{m_\tau}^{\wedge}(f-\pi_\tau f))| + \sum_{j=0}^{\tau-1}\sup_{f\in\sF}\Big|\G_n(\varphi_{m_j-m_{j+1}}^{\wedge}(\pi_{j+1}f - \pi_j f))\Big|\nonumber\\
    &&\quad\quad\quad\quad+ \sum_{j=0}^{\tau-1}\sup_{f\in\sF}|\G_n(R(j))|\nonumber\\
    &\le& \sup_{f\in\sF}|\G_n(\pi_0 f)|\nonumber\\
    &&\quad\quad + \Big\{\sup_{f\in\sF}|\G_n(\varphi_{m_\tau}^{\wedge}(\Delta_\tau f))| + 2\sqrt{n}\sup_{f\in \sF}\|\Delta_\tau f\|_{1,n}\Big\}\nonumber\\
    &&\quad\quad + \sum_{j=0}^{\tau-1}\sup_{f\in\sF}\Big|\G_n(\varphi_{m_j-m_{j+1}}^{\wedge}(\pi_{j+1}f - \pi_j f))\Big|\nonumber\\
    &&\quad\quad + \sum_{j=0}^{\tau-1}\Big\{\sup_{f\in\sF}\Big|\G_n(\min\big\{\big|\varphi_{m_{j+1}}^{\vee}(\Delta_{j+1}f)\big|,2m_j\big\})\Big|\nonumber\\
    &&\quad\quad\quad\quad\quad\quad\quad\quad\quad\quad+ 2\sqrt{n}\sup_{f\in\sF}\| \Delta_{j+1}f \Ii_{\{\Delta_{j+1}f > m_{j+1}\}}\|_{1,n}\Big\}\nonumber\\
    &&\quad\quad + \sum_{j=0}^{\tau-1}\Big\{\sup_{f\in\sF}\Big|\G_n(\min\big\{\big|\varphi_{m_j-m_{j+1}}^{\vee}(\Delta_{j}f)\big|,2m_j\big\})\Big|\nonumber\\
    &&\quad\quad\quad\quad\quad\quad\quad\quad\quad\quad + 2\sqrt{n}\sup_{f\in\sF}\| \Delta_{j}f \Ii_{\{\Delta_{j}f > m_j - m_{j+1}\}}\|_{1,n}\Big\}\nonumber\\
    &=:& R_1 + R_2 + R_3 + R_4 + R_5.\label{truncation_decomposition_final}
\end{eqnarray}

    We now discuss the terms $R_i$, $i \in \{1,...,5\}$ from \reff{truncation_decomposition_final}. Therefore, put $C_n := c(1+\frac{\ID_n^{\infty}}{\ID_n}) + \frac{\ID_n}{\ID_n^{\infty}}$.
    
    Since $\Delta_{jk} = u_{jk} - l_{jk}$ with $l_{jk},u_{jk}\in \sF$, the class $\{\frac{1}{2}\Delta_{jk}:k \in \{1,...,\N(\delta_j)\}\}$ still fulfills Assumption \ref{ass1}. We conclude by Lemma \ref{lemma_properties_truncationfunction}(iii) that for arbitrary $m,\tilde m > 0$, the classes
    \begin{eqnarray*}
       && \{\frac{1}{2}\varphi_m^{\wedge}(\Delta_{jk}):k \in \{1,...,\N(\delta_j)\}\},\\ &&\{\frac{1}{2}\min\{\varphi_m^{\vee}(\Delta_{jk}),2\tilde m\}:k \in \{1,...,\N(\delta_j)\}\},\\
       &&\{\frac{1}{2}\varphi_m^{\wedge}(\pi_{j+1}f - \pi_j f):k \in \{1,...,\N(\delta_j)\}\}
    \end{eqnarray*}
    fulfill Assumption \ref{ass1}.
    
    \begin{itemize}
        \item Since $|\{\pi_0 f: f\in \sF(M_n)\}| \le \N(\delta_0) = \N(\sigma)$, $\|\pi_0 f\|_{\infty} \le M_n \le m(n,\delta_0,\N(\delta_1))$ and $V_n(\pi_0 f) \le \sigma = \delta_0$ (by assumption, every $f\in \sF$ fulfills $V_n(f) \le \sigma$), we have by \reff{proposition_rosenthal_bound_implication2}:
    \[
        \IE R_1 = \E \sup_{f\in \sF(M_n)}|\G_n(\pi_0 f)| \le C_n\delta_0 \sqrt{1 \vee \log \N(\delta_1)}.
    \]
    \item It holds that $|\{\varphi^{\wedge}_{m_{\tau}}(\Delta_{\tau} f): f\in \sF(M_n)\}| \le N_{\tau}$. If $g := \varphi^{\wedge}_{m_{\tau}}(\Delta_{\tau} f)$, then $\|g\|_{\infty} \le m_{\tau} \le  m(n,\delta_{\tau},N_{\tau+1})$ and $V_n(g) \le V_n(\Delta_{\tau} f) \le \delta_{\tau}$. We conclude by \reff{proposition_rosenthal_bound_implication2} that:
        \begin{equation}
        \E \sup_{f\in \sF(M_n)}|\G_n(\varphi^{\wedge}_{m_{\tau}}(\Delta_{\tau} f))| \le C_n \delta_{\tau}\cdot \sqrt{1 \vee \log N_{\tau+1}}.\label{proof_chaining_eq3}
    \end{equation}
    For the second term, we have by definition of $\tau$ in \reff{definition_stoppingtime_chaining} and the Cauchy Schwarz inequality:
    \begin{equation}
        \sqrt{n}\|\Delta_{\tau} f\|_{1,n} \le \sqrt{n}\|\Delta_{\tau} f\|_{2,n} \le \sqrt{n}V_n(\Delta_{\tau} f) \le \sqrt{n}\delta_{\tau} \le I(\sigma).\label{proof_chaining_eq4}
    \end{equation}
    From \reff{proof_chaining_eq3} and \reff{proof_chaining_eq4} we obtain
    \[
        \IE R_2 \le C_n \delta_{\tau}\sqrt{1 \vee \log N_{\tau+1}} + 2\cdot I(\sigma).
    \]
    \item Since the partitions are nested, it holds that $|\{\varphi^{\wedge}_{m_j-m_{j+1}}(\pi_{j+1}f - \pi_j f): f\in \sF(M_n)\}| \le N_{j+1}$. If $g := \varphi^{\wedge}_{m_j-m_{j+1}}(\pi_{j+1}f - \pi_j f)$, we have  $\|g\|_{\infty} \le m_j - m_{j+1} \le m_j \le m(n,\delta_j,N_{j+1})$ and
    \[
        |g| \le |\pi_{j+1}f - \pi_j f| \le \Delta_j f.
    \]
    Furthermore, $V_n(g) \le V_n(\Delta_j f) \le \delta_j$. We conclude by \reff{proposition_rosenthal_bound_implication2} that:
    \[
        \IE R_3 \le \sum_{j=0}^{\tau-1}\E \sup_{f\in \sF(M_n)}|\G_n(\varphi^{\wedge}_{m_j-m_{j+1}}(\pi_{j+1}f - \pi_j f))| \le C_n \sum_{j=0}^{\tau-1}\delta_j \sqrt{1 \vee \log N_{j+1}}.
    \]
    \item It holds that $|\{\min\{\varphi^{\vee}_{m_{j+1}}(\Delta_{j+1}f),2m_j\}: f\in \sF(M_n)\}| \le N_{j+1}$. If $g := \min\{\varphi^{\vee}_{m_{j+1}}(\Delta_{j+1}f),2m_j\}$, we have  $\|g\|_{\infty} \le 2m_j = m(n,\delta_j,N_{j+1})$ and
    \[
        |g| \le \Delta_{j+1}f.
    \]
    By monotonicity of $V_n$, we have $V_n(g) \le V_n(\Delta_{j+1}f) \le \delta_{j+1} \le \delta_j$. We conclude by \reff{proposition_rosenthal_bound_implication2} that:
    \begin{equation}
        \sum_{j=0}^{\tau-1}\E \sup_{f\in \sF(M_n)}|\G_n(\min\{\varphi^{\vee}_{m_{j+1}}(\Delta_{j+1}f),2m_j\})| \le C_n \sum_{j=0}^{\tau-1}\delta_j \sqrt{1 \vee \log N_{j+1}}.\label{proof_chaining_eq5}
    \end{equation}
    Note that $V_n(\Delta_{j+1}f) \le \delta_{j+1}$ and $m_{j+1} = \frac{1}{2}m(n,\delta_{j+1},N_{j+2})$. By \reff{proposition_rosenthal_bound_implication3}, we have
    \begin{equation}
        \sqrt{n}\|\Delta_{j+1}f \Ii_{\{\Delta_{j+1}f > m_{j+1}\}}\|_1 \le 2\delta_{j+1}\sqrt{1 \vee \log N_{j+2}}.\label{proof_chaining_eq6}
    \end{equation}
    From \reff{proof_chaining_eq5} and \reff{proof_chaining_eq6} we obtain
    \[
        \IE R_4 \le (C_n+4) \sum_{j=0}^{\tau}\delta_j \sqrt{1 \vee \log N_{j+1}}.
    \]
    \item It holds that $|\{\min\{\varphi^{\vee}_{m_j - m_{j+1}}(\Delta_{j}f),2m_j\}: f\in \sF(M_n)\}| \le N_{j+1}$. If $g := \min\{\varphi^{\vee}_{m_j - m_{j+1}}(\Delta_{j}f),2m_j\}$, we have  $\|g\|_{\infty} \le 2m_j = m(n,\delta_j,N_{j+1})$ and
    \[
        |g| \le \Delta_{j}f.
    \]
    Thus, $V_n(g) \le V_n(\Delta_{j} f) \le \delta_{j}$. We conclude by \reff{proposition_rosenthal_bound_implication2} that:
    \begin{equation}
        \sum_{j=0}^{\tau-1}\E \sup_{f\in \sF(M_n)}|\G_n(\min\{\varphi^{\vee}_{m_j - m_{j+1}}(\Delta_{j+1}f),2m_j\})| \le C_n \sum_{j=0}^{\tau-1}\delta_j \cdot \sqrt{1 \vee \log N_{j+1}}.\label{proof_chaining_eq7}
    \end{equation}
    Note that $V_n(\Delta_{j}f) \le \delta_{j}$ and
    \begin{eqnarray*}
        2(m_{j} - m_{j+1}) &=& m(n,\delta_{j},N_{j+1}) - m(n,\delta_{j+1},N_{j+2})\\
        &=& \ID_n^{\infty}n^{1/2}\Big[\frac{r(\frac{\delta_j}{\ID_n})}{\sqrt{1 \vee \log N_{j+1}}} - \frac{r(\frac{\delta_{j+1}}{\ID_n})}{\sqrt{1 \vee \log N_{j+2}}}\Big]\\
        &\ge& \frac{\ID_n^{\infty}n^{1/2}}{\sqrt{1 \vee \log N_{j+1}}}\big[r(\frac{\delta_j}{\ID_n}) - r(\frac{\delta_{j+1}}{\ID_n})\big]\\
        &\ge& \frac{1}{2}\frac{\ID_n^{\infty}n^{1/2}}{\sqrt{1 \vee \log N_{j+1}}} r(\frac{\delta_j}{\ID_n}) = m_j,
    \end{eqnarray*}
    where the last inequality is due to Lemma \ref{lemma_chaining_i}.
    By \reff{proposition_rosenthal_bound_implication3} we have
    \begin{equation}
        \sqrt{n}\|\Delta_{j}f \Ii_{\{\Delta_{j}f > m_{j} - m_{j+1}\}}\|_{1,n} \le \sqrt{n}\|\Delta_{j}f \Ii_{\{\Delta_{j}f > \frac{m_{j}}{2}\}}\|_{1,n} \overset{m_j = \frac{1}{2}m(n,\delta_j,N_{j+1})}{\le} 4\delta_{j}\sqrt{1 \vee \log N_{j+1}}.\label{proof_chaining_eq8}
    \end{equation}
    From \reff{proof_chaining_eq7} and \reff{proof_chaining_eq8} we obtain
    \[
        R_5 \le (C_n+8) \sum_{j=0}^{\tau-1}\delta_j \sqrt{1 \vee \log N_{j+1}}.
    \]
    \end{itemize}
    Summarizing the bounds for $R_i$, $i = 1,...,5$, we obtain that with some universal constant $\tilde c > 0$, 
    \begin{equation}
        \E \sup_{f\in \sF(M_n)}\Big|\G_n(f)\Big| \le \tilde c\cdot C_n\Big[\sum_{j=0}^{\tau}\delta_j \sqrt{1 \vee \log N_{j+1}} + I(\sigma)\Big].\label{proof_chaining_eq9}
    \end{equation}
    We have $(1 \vee \log N_{j})^{1/2} = \Big(1 \vee \sum_{i=0}^{j}\log \N(\delta_i)\Big)^{1/2} \le \Big(\sum_{i=0}^{j}(1 \vee \IH(\delta_i))\Big) \le \sum_{i=0}^{j}(1 \vee \IH(\delta_i))^{1/2}$, thus 
    \begin{eqnarray}
        \sum_{j=0}^{\tau}\delta_j \sqrt{1 \vee \log N_{j+1}} &\le& \sum_{j=0}^{\infty}\delta_j \sum_{i=0}^{j}\sqrt{1 \vee \IH(\delta_{i+1})} \le \sum_{i=0}^{\infty} \Big(\sum_{j=i}^{\infty}\delta_j\Big) \sqrt{1 \vee \IH(\delta_{i+1})}\nonumber\\
        &=& 2\sum_{i=0}^{\infty}\delta_i \sqrt{1 \vee \IH(\delta_{i+1})} \le 4\sum_{i=0}^{\infty}\delta_{i+1} \sqrt{1 \vee \IH(\delta_{i+1})}.\label{proof_chaining_eq10}
    \end{eqnarray}
    
    Since $H$ is increasing, we obtain
    \begin{eqnarray}
        \sum_{i=0}^{\infty}\delta_{i+1} \sqrt{1 \vee \IH(\delta_{i+1})} &\le& \sum_{i=0}^{\infty}\delta_{i} \sqrt{1 \vee \IH(\delta_{i})} = 2\sum_{i=0}^{\infty}\delta_{i+1} \sqrt{1 \vee \IH(\delta_{i})}\nonumber\\
        &=& 2\sum_{i=0}^{\infty}\int_{\delta_{i+1}}^{\delta_i}\sqrt{1 \vee \IH(\delta_{i})} d\varepsilon\nonumber\\
        &\le& 2\sum_{i=0}^{\infty}\int_{\delta_{i+1}}^{\delta_i}\sqrt{1 \vee \IH(\varepsilon)} d\varepsilon = 2 \int_0^{\sigma}\sqrt{1 \vee \IH(\varepsilon)} d\varepsilon = 2\cdot I(\sigma).\label{proof_chaining_eq11}
    \end{eqnarray}
    Inserting \reff{proof_chaining_eq11} into \reff{proof_chaining_eq10} and then into \reff{proof_chaining_eq9}, we obtain the result.
\end{proof}

\begin{proof}[Proof of Corollary \ref{cor_equicont}]
    Define $\tilde \sF := \{f-g: f,g\in \sF\}$. It is easily seen that $\N(\varepsilon,\tilde \sF, V_n) \le \N(\frac{\varepsilon}{2},\sF, V_n)^2$ (cf. \cite{Vaart98}, Theorem 19.5), thus
    \begin{equation}
        \IH(\varepsilon,\tilde \sF, V_n) \le 2 \IH(\frac{\varepsilon}{2},\sF, V_n)\label{cor_equicont_eq1}
    \end{equation}
    
    Let $\sigma > 0$. Define
    \[
        F(z,u) := 2 D_{n}^{\infty}(u)\cdot \bar F(z,u), \quad\quad \bar F(z,u) := \sup_{f\in\sF}|\bar f(z,u)|.
    \]
    Then obviously, $F$ is an envelope function of $\tilde\sF$. 
    
    
    By Markov's inequality, Theorem \ref{thm_equicont} and \reff{cor_equicont_eq1},
    \begin{eqnarray*}
        &&\IP\Big( \sup_{V_n(f-g) \leq \sigma, \, f,g \in \sF} |\G_n(f) - \G_n(g)| \geq \eta \Big)\\
        &\le& \frac{1}{\eta}\E \sup_{V_n(f-g) \leq \sigma, \, f,g \in \sF} |\G_n(f) - \G_n(g)|\\
        &=& \frac{1}{\eta} \E \sup_{\tilde f\in \tilde \sF, V_n(\tilde f) \le \sigma}|\G_n(\tilde f)|\\
        &\le& \frac{\tilde c}{\eta}\Big[(1+\frac{\ID_n^{\infty}}{\ID_n} + \frac{\ID_n}{\ID_n^{\infty}})\int_0^{\sigma}\sqrt{1 \vee \IH(\varepsilon,\tilde \sF,V_n)} d \varepsilon + \sqrt{n}\big\|F \Ii_{\{F > \frac{1}{4}m(n,\sigma,\N(\frac{\sigma}{2}))\}}\big\|_1\Big]\\
        &\le& \frac{\tilde c }{\eta}\Big[2\sqrt{2}(1+\frac{\ID_n^{\infty}}{\ID_n} + \frac{\ID_n}{\ID_n^{\infty}})\int_0^{\sigma/2}\sqrt{1 \vee \IH(u,\sF,V_n)} d u + \frac{4\sqrt{1\vee \IH(\frac{\sigma}{2})}}{r(\frac{\sigma}{\ID_n})}\big\|F^2 \Ii_{\{F > \frac{1}{4}n^{1/2}\frac{r(\sigma)}{\sqrt{1\vee \IH(\frac{\sigma}{2})}}\}}\big\|_{1,n}\Big].
    \end{eqnarray*}
    The first term converges to 0 by \reff{cor_equicont_cond0} and \reff{cor_equicont_cond1} for $\sigma \to 0$ (uniformly in $n$).
    
    We now discuss the second term. The continuity conditions from Assumption \ref{ass1} and Assumption \ref{ass_clt_fcont} transfer to $\bar F$ by the inequality
    \[
        |\bar F(z_1,u_1) - \bar F(z_2,u_2)| = |\sup_{f\in\sF}\bar f(z_1,u_1) - \sup_{f\in\sF}\bar f(z_2,u_2)| \le \sup_{f\in\sF}|f(z_1,u_1) - f(z_2,u_2)|
    \]
    We therefore have as in Lemma \ref{lemma_theorem_clt_mult}(ii) that for all $u,u_1,u_2,v_1,v_2 \in [0,1]$,
    \begin{eqnarray}
        \|\bar F(Z_i,u) - \bar F(\tilde Z_i(\frac{i}{n}),u)\|_2 \le C_{cont}\cdot n^{-\alpha s},\label{cor_equicont_eq4}\\
        \|\bar F(Z_i(v_1),u_1) - \bar F(\tilde Z_i(v_2),v_2)\|_2 \le C_{cont}\cdot\big(|v_1 - v_2|^{\alpha s} + |u_1 - u_2|^{\alpha s}\big).\label{cor_equicont_eq5}
    \end{eqnarray}
    Put $c_n = \frac{1}{8}\frac{n^{1/2}}{\sup_{i=1,...,n}D_n^{\infty}(\frac{i}{n})}\frac{r(\sigma)}{\sqrt{1\vee \IH(\frac{\sigma}{2})}}$. Then by Lemma \ref{lemma_lindeberg_analytic}(ii) and \reff{cor_equicont_eq4},
    \begin{eqnarray}
        &&\|F^2 \Ii_{\{F > \frac{1}{4}n^{1/2}\frac{r(\sigma)}{\sqrt{1\vee \IH(\frac{\sigma}{2})}}\}}\|_{1,n}\nonumber\\
        &\le& \frac{4}{n}\sum_{i=1}^{n}D_{n}^{\infty}(\frac{i}{n})^2\cdot \IE\Big[\bar F(Z_i,\frac{i}{n})^2\Ii_{\{|\bar F(Z_i,\frac{i}{n})| > c_n\}}\Big]\nonumber\\
        &\le& \frac{16}{n}\sum_{i=1}^{n}D_{n}^{\infty}(\frac{i}{n})^2\cdot \IE\Big[\bar F(\tilde Z_i(\frac{i}{n}),\frac{i}{n})^2\Ii_{\{|\bar F(\tilde Z_i(\frac{i}{n}),\frac{i}{n})| > c_n\}}\Big]\nonumber\\
        &&\quad\quad\quad\quad+ 16C_{cont}\cdot n^{-\alpha s}\cdot (\ID_n^{\infty})^2.\label{cor_equicont_eq6}
    \end{eqnarray}
    Put $\tilde W_i(u) := \bar F(\tilde Z_i(u),u)$ and $a_n(u) := (D_{n}^{\infty}(u))^2$. By \reff{cor_equicont_eq5}, $\|\tilde W_i(u_1) - \tilde W_i(u_2)\|_2 \le 2C_{cont}|u_1 - u_2|^{\alpha s}$. By the assumptions on $D_{f,n}(\cdot)$, $c_n \to \infty$ and $\limsup_{n\to\infty}\frac{1}{n}\sum_{i=1}^{n}|a_n(\frac{i}{n})| = \limsup_{n\to\infty}(\ID_n^{\infty})^2 < \infty$. We conclude with Lemma \ref{lemma_lindeberg}(i) that
    \[
        \frac{16}{n}\sum_{i=1}^{n}D_{n}^{\infty}(\frac{i}{n})^2\cdot \IE\Big[\bar F(\tilde Z_i(\frac{i}{n}),\frac{i}{n})^2\Ii_{\{|\bar F(\tilde Z_i(\frac{i}{n}),\frac{i}{n})| > c_n\}}\Big] \to 0,
    \]
    that is, the first summand in \reff{cor_equicont_eq6} tends to $0$. Since $\limsup_{n\to\infty}\ID_n^{\infty} < \infty$, we obtain that \reff{cor_equicont_eq6} tends to $0$.
\end{proof}

\subsection{Proofs of Section \ref{sec_maximal_chaining}}
\label{sec_maximal_chaining_supp}

\begin{proof}[Proof of Lemma \ref{lemma_properties_truncationfunction}]
    \begin{enumerate}
        \item[(i)] Since $|x_1| + |x_2| \le m$ implies $|x_1|,|x_2| \le m$, we have
        \[
            I := \big|\varphi_m^{\wedge}(x_1+x_2+x_3) - \varphi_m^{\wedge}(x_1) - \varphi_m^{\wedge}(x_2)\big| = \big|\varphi_m^{\wedge}(x_1+x_2+x_3) - x_1 - x_2|.
        \]
        Case 1: $x_1+x_2+x_3 > m$. Then, since $|x_1|+|x_2| \le m$, we have $I = |m-x_1-x_2| = m-x_1-x_2 < x_3 \le |x_3|$.\\
        Case 2: $x_1+x_2+x_3 \in [-m,m]$. Then $I = |x_1+x_2+x_3-x_1-x_2| = |x_3|$.\\
        Case 3: $x_1+x_2+x_3 < -m$. Then, since $|x_1|+|x_2| \le m$, we have $I = |-m-x_1-x_2| = m+x_1+x_2 < -x_3 \le |x_3|$.\\
        Furthermore, $I \le |\varphi_m(x_1+x_2+x_3)| + |x_1+x_2| \le m+m = 2m$.
        \item[(ii)] The first assertion is obvious. If $|x| \le y$, we have
        \begin{eqnarray*}
            |\varphi_m^{\vee}(x)| &=& \begin{cases}
        x-m, & x > m\\
        0, & x \in [-m,m]\\
        -x-m, & x < -m
    \end{cases} = \begin{cases}
        |x|-m, & x > m\\
        0, & x \in [-m,m]\\
        |x|-m, & x < -m
    \end{cases} = (|x|-m)\Ii_{|x| > m}\\
    &\le& (y-m)\Ii_{y > m} = (y-m)\vee 0 = (y-m)\Ii_{\{y-m > 0\}} \le y \Ii_{y > m},
        \end{eqnarray*}
        which shows the second assertion.
        \item[(iii)] We will show that for all $z,z'\in\R^{\N}$ it holds that
        \begin{equation}
            |\varphi_m^{\wedge}(f)(z) - \varphi_m^{\wedge}(f)(z')| \le |f(z)-f(z')|, \quad\quad |\varphi_m^{\vee}(f)(z) - \varphi_m^{\vee}(f)(z')| \le |f(z)-f(z')|\label{lipschitz_truncation}
        \end{equation}
        from which the assertion follows. For real numbers $a_i,b_i$, we have
        \[
            \max_i\{a_i\}= \max_i\{a_i - b_i + b_i\} \le \max_{i}\{a_i-b_i\} + \max_i\{b_i\},
        \]
        thus $|\max_{i}\{a_i\} - \max_i\{b_i\}| \le \max_i|a_i - b_i|$. This implies $|\max\{a,y\} - \max\{a,y'\}| \le |y-y'|$ and therefore
        \begin{eqnarray*}
            |\varphi_m^{\wedge}(f)(z) - \varphi_m^{\wedge}(f)(z')| &=& |(-m)\vee (f(z)\wedge m) - (-m)\vee (f(z')\wedge m)| \le |f(z)\wedge m - f(z') \wedge m|\\
            &=& |(-f(z'))\vee (-m) - (-f(z))\vee (-m)| \le |f(z) - f(z')|.
        \end{eqnarray*}
        For the second inequality in \reff{lipschitz_truncation}, note that
        \[
            \varphi_m^{\vee}(f)(z) = (f(z) - m) \vee 0 + (f(z) + m) \wedge 0.
        \]
        We therefore have
        \[
            |\varphi_m^{\vee}(f)(z) - \varphi_m^{\vee}(f)(z')| = \big|(f(z) - m) \vee 0 - (f(z')-m)\vee 0 + (f(z) + m) \wedge 0 - (f(z')+m)\wedge 0|.
        \]
        If $f(z),f(z') \ge m$, then
        \[
            |\varphi_m^{\vee}(f)(z) - \varphi_m^{\vee}(f)(z')| \le \big|(f(z) - m) \vee 0 - (f(z')-m)\vee 0| \le |f(z) - f(z')|.
        \]
        A similar result is obtained for $f(z),f(z') \le -m$. If $f(z) \ge m$, $f(z') < m$, then
        \begin{eqnarray*}
            && |\varphi_m^{\vee}(f)(z) - \varphi_m^{\vee}(f)(z')|\\
            &\le& \big|(f(z) - m) - (f(z') + m) \wedge 0|\\
            &=& \begin{cases}
                |f(z) - f(z') - 2m| = f(z) - f(z') - 2m \le f(z) - f(z'), & f(z') \le -m,\\
                |f(z) - m| = f(z) - m \le f(z) - f(z'), & f(z') > -m
            \end{cases}.
        \end{eqnarray*}
        A similar result is obtained for $f(z) \ge m$, $f(z') \le m$, which proves \reff{lipschitz_truncation}.
    \end{enumerate}
\end{proof}

\begin{lem}[Properties of $r(\cdot)$]\label{lemma_chaining_i}
    $r(\cdot)$ is well-defined and for each $a > 0$, $\frac{r(a)}{2} \ge r(\frac{a}{2})$ and $r(a) \le a$.
\end{lem}
\begin{proof}
    $q^{*}(\cdot)$ and $r(\cdot)$ are well-defined since $\beta_{norm}(\cdot)$ is decreasing (at a rate $\ll q^{-1}$) and $r \mapsto q^{*}(r)r$ is increasing (at a rate $\ll r$) and $\lim_{r \downarrow 0}q^{*}(r)r = 0$.
    
    Let $a > 0$. We show that $r = 2r(\frac{a}{2})$ fulfills $q^{*}(r)r \le a$. By definition of $r(a)$, we obtain $r(a) \ge r = 2r(\frac{a}{2})$ which gives the result. Since $\beta_{norm}$ is decreasing, $q^{*}$ is decreasing. We conclude that
    \[
        q^{*}(r)r = 2\cdot q^{*}(2r(\frac{a}{2})) r(\frac{a}{2}) \le 2 \cdot q^{*}(r(\frac{a}{2})) r(\frac{a}{2}) \le 2\cdot \frac{a}{2} = a.
    \]
    
    The second inequality $r(a) \le a$ follows from the fact that $q^{*}(r)r$ is increasing and $q^{*}(a)a \ge a$.
\end{proof}

\subsection{Proofs of Section \ref{sec_clt}}
\label{sec_clt_supp}

\begin{proof}[Proof of Theorem \ref{theorem_clt_mult}]
    Denote $W_{i}(f) := f(Z_i,\frac{i}{n})$ and $\IW_i := (W_i(f_1),...,W_i(f_m))'$. Let $a = (a_1,...,a_m)'\in\R^{m} \backslash \{0\}$. We use the decomposition
    \begin{eqnarray*}
        \frac{1}{\sqrt{n}}\sum_{i=1}^{n}a'(\IW_i - \E \IW_i) = \sum_{j=0}^{\infty}\frac{1}{\sqrt{n}}\sum_{i=1}^{n}a'P_{i-j}\IW_i.
    \end{eqnarray*}
    For fixed $J\in \N \cup \{\infty\}$, put
    \[
        (S_n(J))_{k=1,...,m} := S_n(J) := \sum_{j=0}^{J-1}\frac{1}{\sqrt{n}}\sum_{i=1}^{n}P_{i-j}\IW_i.
    \]
    Then, since $P_{i-j}W_i(f_k)$, $i = 1,...,n$ is a martingale difference sequence and by Lemma \ref{lemma_theorem_clt_mult}(i), 
    \begin{eqnarray*}
        \|S_n(\infty)_k - S_n(J)_k\|_2 &\le& \sum_{j=J}^{\infty}\big\|\frac{1}{\sqrt{n}}\sum_{i=1}^{n}P_{i-j}W_i(f_k)\big\|_2 = \sum_{j=J}^{\infty}\Big(\frac{1}{n}\sum_{i=1}^{n}\|P_{i-j}W_i(f_k)\|_2^2\Big)^{1/2}\\
        &\le& \Big(\frac{1}{n}\sum_{i=1}^{n}D_{f_k,2,n}(\frac{i}{n})^2\Big)^{1/2}\cdot \sum_{j=J}^{\infty}\Delta(j),
    \end{eqnarray*}
    thus
    \begin{equation}
        \limsup_{J,n\to\infty}\|S_n(\infty)_k - S_n(J)_k\|_2 \le \sup_{n\in\N}\Big(\frac{1}{n}\sum_{i=1}^{n}D_{f_k,2,n}(\frac{i}{n})^2\Big)^{1/2}\cdot \limsup_{J\to\infty}\sum_{j=J}^{\infty}\Delta(j) = 0.\label{theorem_clt_mult_step1}
    \end{equation}
    
    Define
    \[
        (S_n^{\circ}(J)_k)_{k=1,...,m} := S_n^{\circ}(J) := \frac{1}{\sqrt{n}}\sum_{i=1}^{n-J+1}\sum_{j=0}^{J-1}P_{i}\IW_{i+j}.
    \]
    Then we have
    \begin{eqnarray*}
        \|S_n^{\circ}(J)_k - S_n(J)_k\|_2 &\le& \sum_{j=0}^{J-1}\|\frac{1}{\sqrt{n}}\sum_{i=1}^{j}P_{i-j}W_i(f_k)\|_2 + \frac{1}{\sqrt{n}}\sum_{j=0}^{J-1}\|\sum_{i=n-J+j+1}^{n}P_{i-j}W_i(f_k)\|_2\\
        &\le& \frac{2J^2}{\sqrt{n}}\cdot \sup_{i=1,...,n+j}\|P_{i-j}W_i(f_k)\|_2\\
        &\le& \frac{2J^2}{\sqrt{n}}\cdot \sup_{i=1,...,n+j}\|f_k(Z_i,\frac{i}{n})\|_2.
    \end{eqnarray*}
    By Lemma \ref{lemma_theorem_clt_mult}(i),
    \[
        \sup_{i=1,...,n+j}\|f_k(Z_i,\frac{i}{n})\|_2 \le C_{\Delta,2}\cdot D_{2,n}(\frac{i}{n}),
    \]
    which gives
    \begin{equation}
        \lim_{n\to\infty}\|S_n^{\circ}(J)_k - S_n(J)_k\|_2 = 0.\label{theorem_clt_mult_step2}
    \end{equation}
    
    \emph{Stationary approximation: } Put $\tilde S_n^{\circ}(J) = (\tilde S_n^{\circ}(J)_k)_{k=1,...,m}$, where
    \[
        \tilde S_n^{\circ}(J)_k := \frac{1}{\sqrt{n}}\sum_{i=1}^{n-J+1}\sum_{j=0}^{J-1} P_i f_k(\tilde Z_{i+j}(\frac{i}{n}), \frac{i}{n}).
    \]
    Then we have
    \begin{eqnarray*}
        &&\|S_n^{\circ}(J)_k - \tilde S_n^{\circ}(J)_k\|_2\\
        &\le& \sum_{j=0}^{J-1} \Big(\frac{1}{n}\sum_{i=1}^{n-J+1}\Big\|P_i f_k(Z_{i+j},\frac{i+j}{n}) - P_i f_k(\tilde Z_{i+j}(\frac{i}{n}), \frac{i}{n})\Big\|_2^2\Big)^{1/2}.
    \end{eqnarray*}
    For each $j,k$, it holds that
    \begin{eqnarray*}
        && \frac{1}{n}\sum_{i=1}^{n-J+1}\|P_i f_k(Z_{i+j},\frac{i+j}{n}) - P_i f_k(\tilde Z_{i+j}(\frac{i}{n}), \frac{i}{n})\|_2^2\\
        &\le& \frac{2}{n}\sum_{i=1}^{n-J+1}\Big(D_{f_k,n}(\frac{i+j}{n}) - D_{f_k,n}(\frac{i}{n})\Big)^2\cdot \sup_{i}\|\bar f(Z_{i+j},\frac{i+j}{n})\|_{2}^2\\
        &&\quad\quad + \frac{2}{n}\sum_{i=1}^{n-J+1}D_{f,n}(\frac{i}{n})^2\cdot \sup_{i}\Big\|\bar f_k(Z_{i+j},\frac{i+j}{n}) - \bar f_k(\tilde Z_{i+j}(\frac{i}{n}),\frac{i}{n})]\|_2^2.
    \end{eqnarray*}
    By Lemma \ref{lemma_theorem_clt_mult}, we have $\sup_{i}\|\bar f(Z_{i+j},\frac{i+j}{n})\|_2^2 < \infty$. Since $\frac{1}{\sqrt{n}}D_{f_k,n}(\cdot)$ has bounded variation uniformly in $n$,
    \[
        \frac{1}{n}\sum_{i=1}^{n-J+1}\Big(D_{f_k,n}(\frac{i+j}{n}) - D_{f_k,n}(\frac{i}{n})\Big)^2 \le \sup_{i=1,...,n}\frac{1}{\sqrt{n}}D_{f_k,n}(\frac{i}{n}) \cdot \frac{1}{\sqrt{n}}\sum_{i=1}^{n-J+1}\Big|D_{f_k,n}(\frac{i+j}{n}) - D_{f_k,n}(\frac{i}{n})\Big| \to 0.
    \]
    By Lemma \ref{lemma_theorem_clt_mult}(ii),
    \begin{eqnarray*}
        \sup_i\Big\|\bar f_k(Z_{i+j},\frac{i+j}{n}) - \bar f_k(\tilde Z_{i+j}(\frac{i}{n}),\frac{i}{n})\Big\|_2 \to 0.
    \end{eqnarray*}
    We therefore obtain
    \begin{equation}
        \|S_n^{\circ}(J)_k - \tilde S_n^{\circ}(J)_k\|_2 \to 0.\label{theorem_clt_mult_step3}
    \end{equation}
    
    Note that
    \[
        M_{i,k} := \frac{1}{\sqrt{n}}\sum_{j=0}^{J} P_i f_k(\tilde Z_{i+j}(\frac{i}{n}), \frac{i}{n}), \quad i = 1,...,n
    \]
    is a martingale difference sequence with respect to $\sG_{i-1}$, and
    \[
        \tilde S_n^{\circ}(J)_k = \sum_{i=1}^{n-J+1}M_{i,k}.
    \]
    We can therefore apply a central limit theorem for martingale difference sequences to $a'\tilde S_n^{\circ}(J) = \sum_{i=1}^{n-J+1}(\sum_{k=1}^{m}a_k M_{i,k})$.
    
    \emph{The Lindeberg condition:} Let $\varsigma > 0$. Iterated application of Lemma \ref{lemma_lindeberg_analytic}(i) yields that there are constants $c_1,c_2 > 0$ only depending on $m,J$ such that
    \begin{eqnarray*}
        && \sum_{i=1}^{n-J+1}\IE[(\sum_{k=1}^{m}a_k M_{i,k})^2 \Ii_{\{|\sum_{k=1}^{m}a_k M_{i,k}| > \varsigma \sqrt{n}\}}]\\
        &\le& c_1\sum_{l=0,1}\sum_{j=0}^{J-1}\sum_{k=1}^{m}|a_k|^2\cdot \frac{1}{n}\sum_{i=1}^{n-J}\IE\Big[\IE[f_k(\tilde Z_{i+j}(\frac{i}{n}),\frac{i}{n})|\sG_{i-l}]^2 \Ii_{\{|\IE[f_k(\tilde Z_{i+j}(\frac{i}{n}),\frac{i}{n})|\sG_{i-l}]| > \sqrt{n}\frac{\varsigma}{c_2|a|_{\infty}}\}}\Big].
    \end{eqnarray*}
    
    For each $l,j,k$, we have
    \begin{eqnarray}
        && \frac{1}{n}\sum_{i=1}^{n-J}\IE\Big[\IE[f_k(\tilde Z_{i+j}(\frac{i}{n}),\frac{i}{n})|\sG_{i-l}]^2 \Ii_{\{|\IE[f_k(\tilde Z_{i+j}(\frac{i}{n}),\frac{i}{n})|\sG_{i-l}]| > \sqrt{n}\frac{\varsigma}{c_2|a|_{\infty}}\}}\Big]\nonumber\\
        &=& \frac{1}{n}\sum_{i=1}^{n-J}D_{f_k,n}(\frac{i}{n})^2\IE\Big[\IE[\bar f_k(\tilde Z_i(\frac{i}{n}),\frac{i}{n})|\sG_{i-l}]^2 \Ii_{\{|\IE[\bar f_k(\tilde Z_i(\frac{i}{n}),\frac{i}{n})|\sG_{i-l}]| > \frac{\sqrt{n}}{\sup_{i=1,...,n}|D_{f,n}(\frac{i}{n})|}\frac{\varsigma}{c_2|a|_{\infty}}\}}\Big]\nonumber\\
        &=& \frac{1}{n}\sum_{i=1}^{n-J}D_{f_k,n}(\frac{i}{n})^2\IE\Big[\tilde W_i(\frac{i}{n})^2 \Ii_{\{|\tilde W_i(\frac{i}{n})| > c_n\}}\Big],\label{theorem_clt_mult_step3_eq2}
    \end{eqnarray}
    where we have put
    \[
        \tilde W_i(u) := \IE[\bar f_k(\tilde Z_i(u),u)|\sG_{i-l}], \quad\quad c_n := \frac{\sqrt{n}}{\sup_{i=1,...,n}|D_{f,n}(\frac{i}{n})|}\frac{\varsigma}{c_2|a|_{\infty}}.
    \]
    By Lemma \ref{lemma_theorem_clt_mult}(ii), $\tilde W_i(u)$ satisfies the assumptions \reff{lemma_ergodic_ass} of Lemma \ref{lemma_lindeberg}. By assumption, $c_n \to \infty$. With $a_n(u) := D_{f_k,n}(u)^2$, we obtain from Lemma \ref{lemma_lindeberg} that \reff{theorem_clt_mult_step3_eq2} converges to $0$, which shows that the Lindeberg condition is satisfied.
    
    \emph{Convergence of the variance:} We have
    \begin{eqnarray*}
        &&\sum_{i=1}^{n-J+1}\IE[(\sum_{k=1}^{m}M_{i,k})^2|\sG_{i-1}]\\
        &=& \sum_{j_1,j_2=0}^{J-1}\sum_{k_1,k_2=1}^{m}a_k a_l \cdot \frac{1}{n}\sum_{i=1}^{n-J+1}D_{f_k,n}(\frac{i}{n})D_{f_l,n}(\frac{i}{n})\cdot \IE\big[P_{i}\bar f_k(\tilde Z_{i+j_1}(\frac{i}{n}),\frac{i}{n})\cdot P_i \bar f_l(\tilde Z_{i+j_2}(\frac{i}{n}),\frac{i}{n})|\sG_{i-1}\big].
    \end{eqnarray*}
    For each $j_1,j_2,k_1,k_2$, we define
    \[
        \tilde W_i(u) := \IE\big[P_{i}\bar f_k(\tilde Z_{i+j_1}(u),u)\cdot P_i \bar f_l(\tilde Z_{i+j_2}(u),u)|\sG_{i-1}\big], \quad\quad a_n(u) := D_{f_k,n}(u)D_{f_l,n}(u).
    \]
    Then
    \begin{eqnarray*}
        && \frac{1}{n}\sum_{i=1}^{n-J+1}D_{f_k,n}(\frac{i}{n})D_{f_l,n}(\frac{i}{n})\cdot \IE\big[P_{i}\bar f_k(\tilde Z_{i+j_1}(\frac{i}{n}),\frac{i}{n})\cdot P_i \bar f_l(\tilde Z_{i+j_2}(\frac{i}{n}),\frac{i}{n})|\sG_{i-1}\big]\\
        &=& \frac{1}{n}\sum_{i=1}^{n-J+1}a_n(\frac{i}{n}) \tilde W_i(\frac{i}{n}).
    \end{eqnarray*}
    By Lemma \ref{lemma_theorem_clt_mult}(i),(ii), we have
    \begin{eqnarray*}
        \|\tilde W_0(u) - \tilde W_0(v)\|_1 &\le& \|\bar f_k(\tilde Z_0(u),u) - \bar f_k(\tilde Z_0(v),v)\|_2\cdot \|\bar f_l(\tilde Z_0(u))\|_2\\
        &&\quad\quad + \|\bar f_l(\tilde Z_0(u),u) - \bar f_l(\tilde Z_0(v),v)\|_2\cdot \|\bar f_k(\tilde Z_0(v))\|_2\\
        &\le& 2C_{cont}C_{\bar f}\cdot |u-v|^{\varsigma s/2}
    \end{eqnarray*}
    Let $A_n := \sup_{i=1,...,n}|a_n(\frac{i}{n})|$. Since $\frac{D_{f,n}(\cdot)}{D_{f,n}^{\infty}}$ has bounded variation uniformly in $n$, it follows that $\frac{a_n(\cdot)}{A_n}$ has bounded variation uniformly in $n$. From $\frac{D_{f,n}^{\infty}}{\sqrt{n}} \to 0$ we conclude $\frac{A_n}{n} \to 0$.
    
    By assumption and the Cauchy-Schwarz inequality,
    \[
        \sup_n\Big[\frac{1}{n}\sum_{i=1}^{n}|a_n(\frac{i}{n})|\Big] \le \sup_{n}\Big(\frac{1}{n}\sum_{i=1}^{n}D_{f_k,n}(\frac{i}{n})^2\Big)^{1/2}\cdot \Big(\frac{1}{n}\sum_{i=1}^{n}D_{f_l,n}(\frac{i}{n})^2\Big)^{1/2} < \infty.
    \]
    It holds that $\sup_{n}(h_n \cdot A_n) \le  \sup_{n}(h_n^{1/2}D_{f_k,n}^{\infty}) \cdot \sup_n (h_n^{1/2}D_{f_l,n}^{\infty}) < \infty$, and
    \[
        |v - u|> h_n \quad\Rightarrow\quad D_{f_k,n}(u)=0, D_{f_l,n}(u) = 0, \quad\Rightarrow\quad a_n(u)=0.
    \]
    Thus, Lemma \ref{lemma_lindeberg}(ii) is applicable.
    
    Case $\IK = 1$: If $u \mapsto \IE[P_{0}\bar f_k(\tilde Z_{j_1}(u),u)\cdot P_0 \bar f_l(\tilde Z_{j_2}(u),u)]$ has bounded variation, we have
    \begin{eqnarray*}
        &&\frac{1}{n}\sum_{i=1}^{n-J+1}D_{f_k,n}(\frac{i}{n})D_{f_l,n}(\frac{i}{n})\cdot \IE\big[P_{i}\bar f_k(\tilde Z_{i+j_1}(\frac{i}{n}),\frac{i}{n})\cdot P_i \bar f_l(\tilde Z_{i+j_2}(\frac{i}{n}),\frac{i}{n})|\sG_{i-1}\big]\\
        &\pto& \lim_{n\to\infty}\int_0^{1}D_{f_k,n}(u)D_{f_l,n}(u)\cdot \IE[P_{0}\bar f_k(\tilde Z_{j_1}(u),u)\cdot P_0 \bar f_l(\tilde Z_{j_2}(u),u)] du.
    \end{eqnarray*}
    and thus
    \begin{eqnarray*}
        &&\sum_{i=1}^{n-J+1}\IE[(\sum_{k=1}^{m}M_{i,k})^2|\sG_{i-1}]\\
        &\pto& \sum_{k,l=1}^{m}a_k a_l \cdot \lim_{n\to\infty}\int_0^{1}D_{f_k,n}(u)D_{f_l,n}(u)\cdot \sum_{j_1,j_2=0}^{J-1}\IE[P_{0}\bar f_k(\tilde Z_{j_1}(u),u)\cdot P_0 \bar f_l(\tilde Z_{j_2}(u),u)] du\\
        &=& a'\Sigma_{kl}^{(1)}(J)a
    \end{eqnarray*}
    Here, for $f,g\in \sF$, we have that $\IE[P_{0}\bar f(\tilde Z_{j_1}(u),u)\cdot P_0 \bar g(\tilde Z_{j_2}(u),u)]$ can be written as
    \begin{eqnarray*}
        && \IE[P_{0}\bar f(\tilde Z_{j_1}(u),u)\cdot P_0 \bar g(\tilde Z_{j_2}(u),u)]\\
        &=& \IE[\IE[\bar f(\tilde Z_{j_1}(u),u)|\sG_0]]\cdot \IE[\bar g(\tilde Z_{j_2}(u),u)|\sG_0]] - \IE[\IE[\bar f(\tilde Z_{j_1}(u),u)|\sG_{-1}]]\cdot \IE[\bar g(\tilde Z_{j_2}(u),u)|\sG_{-1}]]
    \end{eqnarray*}
    which shows that the condition stated in the assumption guarantees the bounded variation of $u \mapsto \IE[P_{0}\bar f(\tilde Z_{j_1}(u),u)\cdot P_0 \bar g(\tilde Z_{j_2}(u),u)]$.
    
    Case $\IK = 2$: If $h_n \to 0$, then we obtain similarly
    \begin{eqnarray*}
        &&\sum_{i=1}^{n-J+1}\IE[(\sum_{k=1}^{m}M_{i,k})^2|\sG_{i-1}]\\
        &\pto& \sum_{k,l=1}^{m}a_k a_l \cdot \lim_{n\to\infty}\int_0^{1}D_{f_k,n}(u)D_{f_l,n}(u) du \cdot \sum_{j_1,j_2=0}^{J-1}\IE[P_{0}\bar f_k(\tilde Z_{j_1}(v),v)\cdot P_0 \bar f_l(\tilde Z_{j_2}(v),v)] du\\
        &=& a'\Sigma_{kl}^{(2)}(J)a.
    \end{eqnarray*}
    By the martingale central limit theorem and \reff{theorem_clt_mult_step2}, \reff{theorem_clt_mult_step3}, we obtain that 
    \begin{equation}
        a'S_n(J) \dto N(0, a'\Sigma_{kl}^{(\IK)}(J)a).\label{theorem_clt_mult_step4}
    \end{equation}
    \emph{Conclusion:} For $\IK \in \{1,2\}$, we have
    \begin{equation}
        a'\Sigma_{kl}^{(\IK)}(J) a \to a'\Sigma_{kl}^{(\IK)}(\infty) a \quad\quad (J\to\infty)\label{theorem_clt_mult_step5}
    \end{equation}
    due to
    \begin{eqnarray*}
        &&\sum_{j_1,j_2: \max\{j_1,j_2\} \ge J}\|P_0 \bar f_k(\tilde Z_{j_1}(u),u)\cdot P_0 \bar f_l(\tilde Z_{j_2}(u),u)\|_1\\
        &\le& \sum_{j_1,j_2: \max\{j_1,j_2\} \ge J}\|P_0 \bar f_k(\tilde Z_{j_1}(u),u)\|_2 \|P_0 \bar f_l(\tilde Z_{j_2}(u),u)\|_2 \to 0 \quad (J\to\infty)
    \end{eqnarray*}
    uniformly in $n$ and
    \[
        \sup_n\int_0^{1}|D_{f_k,n}(u)D_{f_l,n}(u)| du \le \sup_n\big(\int_0^{1}D_{f_k,n}(u)^2 du\big)^{1/2}\big(\int_0^{1}D_{f_l,n}(u)^2 du\big)^{1/2} < \infty.
    \]
    By \reff{theorem_clt_mult_step1}, \reff{theorem_clt_mult_step4}, \reff{theorem_clt_mult_step5},
    \[
        \sum_{j\in\Z}\text{Cov}(\bar f_k(\tilde Z_0(u),u),\bar f_l(\tilde Z_{j}(u),u)) = \sum_{j_1,j_2=0}^{\infty}\IE[P_{0}\bar f_k(\tilde Z_{j_1}(u),u)\cdot P_0 \bar f_l(\tilde Z_{j_2}(u),u)]
    \]
    and the Cramer-Wold device, the assertion of the theorem follows.

\end{proof}

\begin{lem}\label{lemma_lindeberg_analytic}
    Let $c \in \R$, $c > 0$.
    \begin{enumerate}
        \item[(i)] For $x,y\in \R$, it holds that
        \[
            (x+y)^2\Ii_{\{|x+y| > c\}} \le 8x^2 \Ii_{\{|x| > \frac{c}{2}\}} + 8y^2 \Ii_{\{|y| > \frac{c}{2}\}}.
        \]
        \item[(ii)] For random variables $W,\tilde W$, it holds that
        \[
            \IE[W^2 \Ii_{\{|W| > c\}}] \le 4\IE[(W - \tilde W)^2] + 4\IE[\tilde W^2\Ii_{\{|\tilde W| > \frac{c}{2}\}}].
        \]
    \end{enumerate}
\end{lem}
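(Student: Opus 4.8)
The plan is to reduce both parts to a single elementary pointwise inequality and then integrate. First I would prove part (i) by a short case analysis. The driving observation is that $|x+y| > c$ forces $\max\{|x|,|y|\} > c/2$: if both $|x| \le c/2$ and $|y| \le c/2$, then $|x+y| \le c$. Hence on the event $\{|x+y|>c\}$ at least one of the indicators on the right-hand side equals $1$. Combining this with the crude bound $(x+y)^2 \le 2x^2 + 2y^2$, I would split $\{|x+y|>c\}$ according to which variable exceeds $c/2$. When, say, $|x| > c/2$ but $|y| \le c/2$, then $y^2 \le c^2/4 < x^2$, so $(x+y)^2 \le 2x^2 + 2y^2 \le 4x^2$; when both exceed $c/2$ the bound $(x+y)^2 \le 2x^2 + 2y^2 \le 4x^2 + 4y^2$ is immediate; and when $|x+y| \le c$ the left-hand side vanishes. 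This already gives the sharper inequality with constant $4$ in place of $8$, which in particular yields the stated bound.

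Second, I would derive part (ii) from the sharp (constant-$4$) version of (i) by substituting $x = W - \tilde W$ and $y = \tilde W$, so that $x + y = W$. Applying (i) pointwise gives
\[
W^2 \Ii_{\{|W|>c\}} \le 4(W-\tilde W)^2 \Ii_{\{|W-\tilde W|>c/2\}} + 4\tilde W^2 \Ii_{\{|\tilde W|>c/2\}} \le 4(W-\tilde W)^2 + 4\tilde W^2 \Ii_{\{|\tilde W|>c/2\}},
\]
where the last step simply bounds the indicator on the first summand by $1$. Taking expectations yields exactly the claimed inequality.

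The statement is elementary, so there is no genuine obstacle; the only point requiring care is the book-keeping in the case analysis for (i). In particular, obtaining the constant $4$ in (ii) — rather than the $8$ one would get from a naive application — relies on using the refined form of (i) with constant $4$, so I would make sure to record that sharper constant even though part (i) is stated only with $8$. Everything else is a direct computation.
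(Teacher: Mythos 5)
Your proof is correct, and part (ii) takes a genuinely different (and slicker) route than the paper. For part (i) you use essentially the paper's argument — partition $\{|x+y|>c\}$ according to which of $|x|,|y|$ exceeds $c/2$, noting the triangle inequality rules out the case where neither does, and exploit $y^2\le (c/2)^2\le x^2$ in the mixed cases — but with tighter bookkeeping: you get the constant $4$ where the paper's chain of estimates settles for $8$. The divergence is in part (ii): the paper proves it from scratch at the expectation level, writing $W^2\le 2(W-\tilde W)^2+2\tilde W^2$, bounding $\Ii_{\{|W|>c\}}\le \Ii_{\{|W-\tilde W|>c/2\}}+\Ii_{\{|\tilde W|>c/2\}}$, and then controlling $\IE[\tilde W^2\Ii_{\{|W-\tilde W|>c/2\}}]$ by splitting on whether $|\tilde W|\le c/2$ and invoking a Chebyshev/Markov step, $(c/2)^2\,\IP(|W-\tilde W|>c/2)\le \IE[(W-\tilde W)^2]$. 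You instead obtain (ii) in one line by applying the pointwise constant-$4$ version of (i) with $x=W-\tilde W$, $y=\tilde W$, dropping the indicator on the first term, and taking expectations. Your route is more unified — a single pointwise inequality yields both parts — and the paper's Markov step is exactly mirrored, pointwise, by your sub-case $|x|>c/2$, $|y|\le c/2$, where $y^2\le (c/2)^2\le x^2$. You are also right to flag that the sharpening is not optional for this derivation: (i) as stated with constant $8$ would only produce $8$'s in (ii), so recording and verifying the constant $4$ is the one point of care, and you handled it correctly.
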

\begin{proof}[Proof of Lemma \ref{lemma_lindeberg_analytic}]
    \begin{enumerate}
        \item[(i)] It holds that
        \begin{eqnarray*}
        (x+y)^2\Ii_{\{|x+y| > c\}} &\le& 2\big[x^2 + y^2\big]\Ii_{\{|x| > \frac{c}{2} \text{ or } |y| > \frac{c}{2}\}}\\
        &\le& 2\big[x^2 + y^2\big]\big\{2\Ii_{\{|x| > \frac{c}{2},|y| > \frac{c}{2}\}} + \Ii_{\{|x| > \frac{c}{2},|y| \le  \frac{c}{2}\}} + \Ii_{\{|x| \le \frac{c}{2},|y| > \frac{c}{2}\}}\big\}\\
        &\le& 4\big[x^2 \Ii_{\{|x| > \frac{c}{2}\}} + y^2 \Ii_{\{|y| > \frac{c}{2}\}}\big] + 4x^2\Ii_{\{|x| > \frac{c}{2}\}} + 4y^2 \Ii_{\{|y| > \frac{c}{2}\}}\\
        &\le& 8x^2 \Ii_{\{|x| > \frac{c}{2}\}} + 8y^2 \Ii_{\{|y| > \frac{c}{2}\}}.
    \end{eqnarray*}
    \item[(ii)] We have
    \begin{eqnarray}
        \IE[W^2 \Ii_{\{|W| > c\}}] &\le&  2\IE[(|W|-\tilde W)^2\Ii_{\{|W| > c\}}] + 2\IE[\tilde W^2\Ii_{\{|W| > c\}}]\nonumber\\
        &\le& 2\IE[(W-\tilde W)^2] + 2\IE[\tilde W^2\Ii_{\{|W-\tilde W| + |\tilde W| > c\}}].\label{lindeberg_replacement}
    \end{eqnarray}
    Furthermore, with Markov's inequality,
    \begin{eqnarray*}
        &&\IE[\tilde W^2\Ii_{\{|W-\tilde W| + |\tilde W| > c\}}]\\
        &\le& \IE[\tilde W^2\Ii_{\{|W-\tilde W| > \frac{c}{2}\}}] + \IE[\tilde W^2\Ii_{\{|\tilde W| > \frac{c}{2}\}}]\\
        &\le& (\frac{c}{2})^2 \IP(|W - \tilde W| > \frac{c}{2}) + \IE[\tilde W^2\Ii_{\{|W-\tilde W| > \frac{c}{2}\}}\Ii_{\{|\tilde W| > \frac{c}{2}\}}] + \IE[\tilde W^2\Ii_{\{|\tilde W| > \frac{c}{2}\}}]\\
        &\le& \IE[(W - \tilde W)^2] + 2\IE[\tilde W^2\Ii_{\{|\tilde W| > \frac{c}{2}\}}].
    \end{eqnarray*}
    Inserting this inequality into \reff{lindeberg_replacement}, we obtain the assertion.
    \end{enumerate}
\end{proof}

The following lemma generalizes some results from \cite{dahlhaus2019} using similar techniques as therein.
\begin{lem}\label{lemma_lindeberg}
    Let $q \in \{1,2\}$. Let $\tilde W_i(u)$ be a stationary sequence with
    \begin{equation}
        \sup_{u\in[0,1]}\|\tilde W_0(u)\|_q < \infty, \quad\quad \|\tilde W_0(u) - \tilde W_0(v)\|_q \le C_W|u-v|^{\varsigma}.\label{lemma_ergodic_ass}
    \end{equation}
    Let $a_n:[0,1] \to \R$ be some sequence of functions with $\limsup_{n\to\infty}\frac{1}{n}\sum_{i=1}^{n}|a_n(\frac{i}{n})| < \infty$.
    \begin{enumerate}
        \item[(i)] Let $q = 2$. Let $c_n$ be some sequence with $c_n \to \infty$. Then
        \[
            \frac{1}{n}\sum_{i=1}^{n}|a_n(\frac{i}{n})|\cdot \IE[\tilde W_i(\frac{i}{n})^2\Ii_{\{|\tilde W_i(\frac{i}{n})| > c_n\}}] \to 0,
        \]
        \item[(ii)] Let $q = 1$. Suppose that there exists $h_n > 0, v \in [0,1]$ such that for all $u\in[0,1]$, $|v-u| > h_n$ implies $a_n(u)=0$. Put $A_n = \sup_{i=1,...,n}|a_n(\frac{i}{n})|$ and suppose that
        \[
            \sup_{n\in\N}(h_n\cdot A_n) < \infty, \quad\quad \frac{A_n}{n} \to 0, \quad\quad \frac{a_n(\cdot)}{A_n} \text{ has bounded variation uniformly in $n$}.
        \]
        
        Suppose that the limits on the following right hand sides exist. If $u\mapsto \IE \tilde W_0(u)$ has bounded variation, then
        \[
            \frac{1}{n}\sum_{i=1}^{n}a_n(\frac{i}{n}) \tilde W_i(\frac{i}{n}) \pto \lim_{n\to\infty} \int_0^{1}a_n(u) \IE \tilde W_0(u) d u.
        \]
        If $h_n \to 0$, then
        \[
            \frac{1}{n}\sum_{i=1}^{n}a_n(\frac{i}{n}) \tilde W_i(\frac{i}{n}) \pto \lim_{n\to\infty}\int_0^{1}a_n(u) du\cdot \IE \tilde W_0(v).
        \]
    \end{enumerate}
\end{lem}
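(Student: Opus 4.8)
The plan is to reduce both parts to averaging statements for the stationary sequence $(\tilde W_i(u))_i$; throughout I would use that this sequence is ergodic, as holds in all applications where $\tilde W_i(u)$ is a measurable functional of $\sG_i=\sigma(\varepsilon_i,\varepsilon_{i-1},\dots)$. For (i), I first invoke stationarity to rewrite $\IE[\tilde W_i(\frac{i}{n})^2\Ii_{\{|\tilde W_i(\frac{i}{n})|>c_n\}}]=\IE[\tilde W_0(\frac{i}{n})^2\Ii_{\{|\tilde W_0(\frac{i}{n})|>c_n\}}]$, and then establish the uniform integrability bound $\phi(c):=\sup_{u}\IE[\tilde W_0(u)^2\Ii_{\{|\tilde W_0(u)|>c\}}]\to0$ as $c\to\infty$. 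This is where the H\"older condition \reff{lemma_ergodic_ass} enters: $u\mapsto\tilde W_0(u)$ is continuous into $L^2$, so its image over the compact set $[0,1]$ is totally bounded, and on a finite $\eps$-net $\tilde W_0(u_1),\dots,\tilde W_0(u_N)$ I can apply Lemma \ref{lemma_lindeberg_analytic}(ii) (with $W=\tilde W_0(u)$ and $\tilde W=\tilde W_0(u_k)$ the nearest net point) together with dominated convergence for the finitely many $u_k$. The statement then follows by bounding the weighted average by $\phi(c_n)\cdot\sup_n\frac1n\sum_{i=1}^n|a_n(\frac{i}{n})|\to0$.

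For (ii) I would split $\frac1n\sum a_n(\frac{i}{n})\tilde W_i(\frac{i}{n})$ into a deterministic mean and a centered fluctuation. The mean $\frac1n\sum a_n(\frac{i}{n})\IE\tilde W_0(\frac{i}{n})$ is a Riemann sum of $a_n\cdot\IE\tilde W_0$, a product of functions of bounded variation whose total variation is $O(A_n)$; hence the Riemann-sum error is $O(A_n/n)\to0$ and the mean converges to $\lim_n\int_0^1 a_n\,\IE\tilde W_0$. In the case $h_n\to0$ I would first replace $\tilde W_i(\frac{i}{n})$ by $\tilde W_i(v)$ at an $L^1$-cost $\le C_W h_n^{\varsigma}\cdot\sup_n\frac1n\sum|a_n|\to0$ on the support $|\frac{i}{n}-v|\le h_n$, after which the mean is $\IE\tilde W_0(v)\cdot\frac1n\sum a_n(\frac{i}{n})\to\IE\tilde W_0(v)\,\lim_n\int_0^1 a_n$.

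The core is the fluctuation $F_n:=\frac1n\sum a_n(\frac{i}{n})\big(\tilde W_i-\IE\tilde W_0\big)$. When $h_n\to0$, after freezing the second argument at $v$, the summand is a single stationary ergodic mean-zero sequence $Y_i$. Writing $b_{n,i}=a_n(\frac{i}{n})/n$ and $S_k=\sum_{i\le k}Y_i$, summation by parts and the fact that the weights vanish outside the window give $|F_n|\le\big(\sum_i|b_{n,i+1}-b_{n,i}|\big)M_n\le\frac{A_n\norm{a_n/A_n}_{\mathrm{TV}}}{n}M_n$, where $M_n$ is the maximal centered partial sum over the window of length $\ell_n\sim 2nh_n$. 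Since $\frac{A_n}{n}M_n=(A_nh_n)\cdot\frac{M_n}{nh_n}$ and $\sup_n A_nh_n<\infty$, it suffices that $M_n/(nh_n)\pto0$: when $\ell_n\to\infty$ this is the standard consequence $\max_{k\le\ell}|\sum_{j\le k}Y_j|/\ell\to0$ of Birkhoff's theorem, and when $\ell_n$ remains bounded there are only $O(1)$ nonzero terms each weighted by $A_n/n\to0$, the two regimes being merged by a subsequence argument. For the bounded-variation case I would run the same estimate after freezing the index-dependent argument $\frac{i}{n}$ on a partition of $[0,1]$ into $B_n\to\infty$ blocks, replacing $\tilde W_i(\frac{i}{n})$ by $\tilde W_i(u_b)$ at block-wise cost $O(B_n^{-\varsigma})$ and tuning $B_n$ so that the block length $n/B_n$ still diverges.

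The hard part will be precisely this fluctuation step. Because only one moment is available, there is no variance to exploit and one is forced onto the ergodic theorem, so the delicate point is the balance between a possibly diverging weight magnitude $A_n$ and the shrinking window width $h_n$: the boundedness of the product $A_nh_n$ together with $A_n/n\to0$ is exactly what turns the summation-by-parts bound into $\frac{A_n}{n}M_n\pto0$. In the bounded-variation case the further obstacle is the index-dependent argument $\tilde W_i(\frac{i}{n})$, which blocks a direct appeal to a single stationary sequence and necessitates the block-freezing approximation, whose number of blocks must be tuned against both the H\"older modulus and the rate of ergodic averaging.
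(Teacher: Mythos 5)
Your part (i) is correct and is essentially the paper's own argument in a different packaging: where you take a finite $\eps$-net of $[0,1]$ in the $u$-variable and define the uniform-integrability modulus $\phi(c)=\sup_u\IE[\tilde W_0(u)^2\Ii_{\{|\tilde W_0(u)|>c\}}]$, the paper uses a dyadic partition into $2^J$ blocks; both rest on the same three ingredients (the $L^2$-H\"older condition \reff{lemma_ergodic_ass}, Lemma \ref{lemma_lindeberg_analytic}(ii) against the nearest anchor, dominated convergence at the finitely many anchors). Likewise your treatment of the $h_n\to 0$ sub-case of (ii) is sound and in fact leaner than the paper's: freezing at the single anchor $v$ at $L^1$-cost $C_Wh_n^{\varsigma}\sup_n\frac1n\sum_i|a_n(\frac in)|$, summation by parts with total-variation bound $O(A_n/n)$, and the Birkhoff consequence $\frac1\ell\max_{k\le\ell}|S_k|\to 0$ a.s.\ on the single window of length $\sim 2nh_n$ is exactly the mechanism the paper runs (it does so inside its block scheme, with the same identity $\frac{A_n}{n}M_n=(A_nh_n)\frac{M_n}{nh_n}$); your explicit ergodicity caveat matches the paper's implicit appeal to ``the ergodic theorem'' for the Bernoulli-shift sequence. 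Your bounded-$\ell_n$ subsequence patch is also fine.

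The genuine gap is in the bounded-variation sub-case of (ii), where you freeze on a partition with $B_n\to\infty$ blocks ``tuned'' against the H\"older modulus. Birkhoff gives, for each \emph{fixed} $u$, that $\frac1\ell\max_{k\le\ell}\big|\sum_{l\le k}\big(\tilde W_l(u)-\IE\tilde W_0(u)\big)\big|\to 0$ a.s., but your summation-by-parts estimate requires this simultaneously (at least on average) over the $B_n\to\infty$ anchors $u_b$, which themselves move with $n$. Almost-sure convergence at each parameter value is not uniform in the parameter, and an average or maximum over a growing number of identically distributed block functionals, each tending to $0$ in probability, need not tend to $0$; so tuning $B_n$ alone does not close the argument. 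Two repairs: (a) the paper's route -- keep the partition \emph{fixed} at $2^J$ blocks, prove the limit as $n\to\infty$ for each fixed $J$ (finitely many anchors, plain Birkhoff suffices), and only then let $J\to\infty$, which works because the freezing cost $C_W(2^{-J})^{\varsigma}\cdot\sup_n\frac1n\sum_i|a_n(\frac in)|$ is uniform in $n$; or (b) upgrade to an $L^1$ statement -- $\frac1\ell\max_{k\le\ell}|S_k(u)|$ is dominated by $\frac1\ell\sum_{l\le\ell}|Y_l(u)|$, which converges in $L^1$ by the mean ergodic theorem, hence is uniformly integrable and converges to $0$ in $L^1$ for each fixed $u$; combining this with the $L^1$-H\"older bound $\IE\big[\frac1\ell\max_{k\le\ell}|S_k(u)-S_k(u')|\big]\le C_W|u-u'|^{\varsigma}$ and a fixed finite net yields $\sup_{u\in[0,1]}\IE\big[\frac1\ell\max_{k\le\ell}|S_k(u)|\big]\to 0$, after which your $B_n\to\infty$ blocking does go through in expectation. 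As written, however, the step ``standard consequence of Birkhoff's theorem'' is applied beyond its scope at this point.
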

\begin{proof}[Proof of Lemma \ref{lemma_lindeberg}]
    Let $J\in\N$ be fixed and assume that $n \ge 2\cdot 2^J$. For $j \in \{1,...,2^J\}$, Define $I_{j,J,n} := \{i\in \{1,...,n\}: \frac{i}{n}\in (\frac{j-1}{2^J},\frac{j}{2^J}]\}$. Then $(I_{j,J,n})_j$ forms a decomposition of $\{1,...,n\}$ in the sense that  $\sum_{j=1}^{2^J}I_{j,J,n} = \{1,...,n\}$. Since $\frac{i}{n}\in (\frac{j-1}{2^J},\frac{j}{2^J}] \Longleftrightarrow \frac{j-1}{2^J}\cdot n < i \le n\cdot \frac{j-1}{2^J} \le \frac{n}{2^J}$, we conclude that $\frac{n}{2^J}-1 \le |I_{j,J,n}| \le \frac{n}{2^J}$. Thus, since $n \ge 2\cdot 2^J$, 
    \begin{equation}
        \Big|\frac{I_{j,J,n}|}{n} - \frac{1}{2^J}\Big| \le \frac{1}{n}, \quad\quad |I_{j,J,n}| \ge \frac{1}{2}\frac{n}{2^J}.\label{lemma_lindeberg_eq0}
    \end{equation}
    Let $w_i$, $i\in\N$ be an arbitrary sequence. Then it holds that
    \begin{eqnarray}
        \Big|\frac{1}{n}\sum_{i=1}^{n}w_i - \frac{1}{2^J}\sum_{j=1}^{2^J} \frac{1}{|I_{j,J,n}|}\sum_{i \in I_{j,J,n}}w_i\Big| &\le& \sum_{j=1}^{2^J}\Big|\frac{|I_{j,J,n}|}{n} - \frac{1}{2^J}\Big|\cdot \Big|\frac{1}{|I_{j,J,n}|}\sum_{i \in I_{j,J,n}} w_i\Big|\nonumber\\
        &\le& \frac{1}{n}\sum_{j=1}^{2^J}\frac{1}{|I_{j,J,n}|}\sum_{i\in I_{j,J,n}}|w_i|\nonumber\\
        &\le& \frac{2^J}{n^2}\sum_{i=1}^{n}|w_i|\label{lemma_lindeberg_eq00}
    \end{eqnarray}
    \begin{enumerate}
        \item[(i)] Application of \reff{lemma_lindeberg_eq00} with $w_i = a_n(\frac{i}{n})\IE[\tilde W_i(\frac{i}{n})^2\Ii_{\{|\tilde W_i(\frac{i}{n})| > c_n\}}]$ yields
    \begin{eqnarray}
       && \frac{1}{n}\sum_{i=1}^{n}\IE[\tilde W_i(\frac{i}{n})^2\Ii_{\{|\tilde W_i(\frac{i}{n})| > c_n\}}]\nonumber\\
        &\le& \frac{1}{2^J}\sum_{j=1}^{2^J} \frac{1}{|I_{j,J,n}|}\sum_{i \in I_{j,J,n}}\IE[\tilde W_i(\frac{i}{n})^2\Ii_{\{|\tilde W_i(\frac{i}{n})| > c_n\}}] + \frac{2^J}{n}\cdot \frac{1}{n}\sum_{i=1}^{n}a_n(\frac{i}{n})\cdot \sup_u\|\tilde W_0(u)\|_2^2.\label{lemma_lindeberg_eq1}
    \end{eqnarray}
    By Lemma \ref{lemma_lindeberg_analytic}(ii), 
    \begin{eqnarray}
        &&\frac{1}{2^J}\sum_{j=1}^{2^J} \frac{1}{|I_{j,J,n}|}\sum_{i \in I_{j,J,n}}|a_n(\frac{i}{n})|\cdot \IE[\tilde W_i(\frac{i}{n})^2\Ii_{\{|\tilde W_i(\frac{i}{n})| > c_n\}}]\nonumber\\
        &\le& \frac{1}{2^J}\sum_{j=1}^{2^J} \frac{1}{|I_{j,J,n}|}\sum_{i \in I_{j,J,n}}|a_n(\frac{i}{n})|\cdot \IE[\tilde W_0(\frac{j}{2^J})^2\Ii_{\{|\tilde W_0(\frac{j}{2^J})| > c_n\}}]\nonumber\\
        &&\quad\quad\quad + \frac{1}{2^J}\sum_{j=1}^{2^J}\frac{1}{|I_{j,J,n}|}\sum_{i\in I_{j,J,n}}|a_n(\frac{i}{n})|\cdot \big\|\tilde W_0(\frac{i}{n}) - \tilde W_0(\frac{j}{2^J})\big\|_2^2\nonumber\\
        &\le& \Big[\sup_{j=1,...,2^J}\IE[\tilde W_0(\frac{j}{2^J})^2\Ii_{\{|\tilde W_0(\frac{j}{2^J})| > c_n\}}] + C_W(2^{-J})^{\varsigma}\Big]\cdot \frac{1}{2^J}\sum_{j=1}^{2^J}\frac{1}{|I_{j,J,n}|}\sum_{i\in I_{j,J,n}}|a_n(\frac{i}{n})|.\label{lemma_lindeberg_eq2}
    \end{eqnarray}
    By \reff{lemma_lindeberg_eq0},
    \[
        \frac{1}{2^J}\sum_{j=1}^{2^J}\frac{1}{|I_{j,J,n}|}\sum_{i\in I_{j,J,n}}|a_n(\frac{i}{n})| \le \frac{2}{n}\sum_{i=1}^{n}|a_n(\frac{i}{n})|.
    \]
    By the dominated convergence theorem,
    \[
        \limsup_{n\to\infty}\IE[\tilde W_0(\frac{j}{2^J})^2\Ii_{\{|\tilde W_0(\frac{j}{2^J})| > c_n\}}].
    \]
    Furthermore, $\limsup_{n\to\infty}\frac{2^J}{n}\cdot \sup_u\|\tilde W_0(u)\|_2^2 = 0$.
    Inserting \reff{lemma_lindeberg_eq2} into \reff{lemma_lindeberg_eq1} and applying $\limsup_{n\to\infty}$ and afterwards, $\limsup_{J\to\infty}$, yields the assertion.
    \item[(ii)] Since \reff{lemma_ergodic_ass} also holds for $\tilde W_0(u)$ replaced by $\tilde W_0(u) - \IE \tilde W_0(u)$, we may assume in the following that w.l.o.g. that $\IE \tilde W_0(u) = 0$.
    
    By \reff{lemma_lindeberg_eq00} applied to $w_i = a(\frac{i}{n}) W_i(\frac{i}{n})$, we obtain
    \begin{eqnarray}
        && \Big\|\frac{1}{n}\sum_{i=1}^{n}a_n(\frac{i}{n}) \tilde W_i(\frac{i}{n}) - \frac{1}{2^J}\sum_{j=1}^{2^J}\frac{1}{|I_{j,J,n}|}\sum_{i\in I_{j,J,n}}a_n(\frac{i}{n}) \tilde W_i(\frac{i}{n})\Big\|_1\nonumber\\
        &\le& \frac{2^J}{n}\cdot \frac{1}{n}\sum_{i=1}^{n}|a_n(\frac{i}{n})|\cdot \sup_{u}\|W_0(u)\|_1 \to 0 \quad(n\to\infty).\label{lemma_ergodic_eq0}
    \end{eqnarray}
    
    We furthermore have
    \begin{eqnarray}
        && \Big\|\frac{1}{2^J}\sum_{j=1}^{2^J}\frac{1}{|I_{j,J,n}|}\sum_{i\in I_{j,J,n}}a_n(\frac{i}{n}) \tilde W_i(\frac{i}{n})\nonumber\\
        &&\quad\quad\quad\quad\quad\quad\quad\quad\quad\quad - \frac{1}{2^J}\sum_{j=1}^{2^J}\frac{1}{|I_{j,J,n}|}\sum_{i\in I_{j,J,n}}a_n(\frac{i}{n}) \tilde W_i(\frac{j-1}{2^J})\Big\|_1\nonumber\\
        &\le& \frac{1}{2^J}\sum_{j=1}^{2^J}\frac{1}{|I_{j,J,n}|}\sum_{i\in I_{j,J,n}}|a_n(\frac{i}{n})| \cdot \big\|\tilde W_0(\frac{i}{n}) - \tilde W_0(\frac{j-1}{2^J})\big\|_1\nonumber\\
        &\le& \frac{2}{n}\sum_{i=1}^{n}|a_n(\frac{i}{n})|\cdot C_W (2^{-J})^{\varsigma}.\label{lemma_ergodic_eq00}
    \end{eqnarray}
    Fix $j\in \{1,...,2^J\}$. Put $u_j := \frac{j-1}{2^J}$ and, for a real-valued positive $x$, define $[x] := \max\{k\in\N: k > x\}$. By stationarity, the following equality holds in distribution:
    \begin{equation}
        \frac{1}{|I_{j,J,n}|}\sum_{i\in I_{j,J,n}}a_n(\frac{i}{n}) \tilde W_i(u_j) \overset{d}{=} \frac{1}{|I_{j,J,n}|}\sum_{i=1}^{|I_{j,J,n}|}a_n(\frac{i}{n}+\frac{[u_j n]-1}{n})\tilde W_i(u_j).\label{lemma_ergodic_eq000}
    \end{equation}
    Put $\tilde W_i(u)^{\circ} := \tilde W_i(u)\Ii_{\{\frac{i}{n} + \frac{[u_j n]-1}{n}\in [\underline{r}_n,\overline{r}_n]\}}$. By partial summation and since $\frac{a_n(\cdot)}{A_n}$ has bounded variation $B_a$ uniformly in $n$,
    \begin{eqnarray}
        &&\frac{1}{|I_{j,J,n}|}\sum_{i=1}^{|I_{j,J,n}|}a_n(\frac{i}{n}+[u_j n]-1)\tilde W_i(u_j)\nonumber\\
        &=& \frac{1}{|I_{j,J,n}|}\sum_{i=1}^{|I_{j,J,n}|-1}\big\{a_n(\frac{i}{n}+[u_j n]-1) - a_n(\frac{i+1}{n}+[u_j n]-1)\big\}\sum_{l=1}^{i}\tilde W_l(u_j)^{\circ}\nonumber\\
        &&\quad\quad + \frac{1}{|I_{j,J,n}|}A_n\cdot \sum_{l=1}^{|I_{j,J,n}|}\tilde W_l(u_j)^{\circ}\nonumber\\
        &\le& \frac{B_a+1}{|I_{j,J,n}|}A_n\cdot \sup_{i=1,...,|I_{j,J,n}|}\Big|\sum_{l=1}^{i}\tilde W_l(u_j)^{\circ}\Big| \label{lemma_ergodic_eq1}
    \end{eqnarray}
    By stationarity, we have
    \begin{eqnarray*}
        &&\sup_{i=1,...,|I_{j,J,n}|}\Big|\sum_{l=1}^{i}\tilde W_l(u_j)^{\circ}\Big|\\
        &=& \sup_{i=1,...,|I_{j,J,n}|}\Big|\sum_{l=1 \vee (\lceil n(v+h_n)\rceil - [u_j n]+1)}^{i \wedge (\lfloor n(v-h_n)\rfloor - [u_j n]+1)}\tilde W_l(u_j)\Big| \overset{d}{=} \sup_{i=1,...,m_n}\Big|\sum_{l=1}^{i}\tilde W_l(u_j)\Big|,
    \end{eqnarray*}
    since $(|I_{j,J,n}| \wedge (\lfloor n(v+h_n)\rfloor - [u_j n]+1)) - (1 \vee (\lceil n(v-h_n)\rceil - [u_j n]+1)) \le m_n := 2nh_n$. By assumption, $m_n = \frac{2n}{A_n}\cdot A_nh_n \to \infty$.
    
    By the ergodic theorem,
    \[
        \lim_{m\to\infty}\Big|\frac{1}{m}\sum_{l=1}^{m}\tilde W_l(u_j)\Big| = 0 \quad a.s.
    \]
    and especially $(\frac{1}{m}\sum_{l=1}^{m}\tilde W_l(u_j))_m$ is bounded a.s. We conclude that 
    \begin{eqnarray*}
        &&\frac{1}{m_n}\sup_{i=1,...,m_n}\Big|\sum_{l=1}^{i}\tilde W_l(u_j)\Big|\\
        &\le& \frac{1}{\sqrt{m_n}}\sup_{i=1,...,\sqrt{m_n}}\Big|\frac{1}{i}\sum_{l=1}^{i}\tilde W_l(u_j)\Big| + \sup_{i=\sqrt{m_n}+1,...,m_n}\Big|\frac{1}{i}\sum_{l=1}^{i}\tilde W_l(u_j)\Big| \to 0.
    \end{eqnarray*}
    We conclude from \reff{lemma_ergodic_eq1} that
    \begin{eqnarray}
        &&\frac{1}{|I_{j,J,n}|}\sum_{i=1}^{|I_{j,J,n}|}a_n(\frac{i}{n}+[u_j n]-1)\tilde W_i(u_j)\nonumber\\
        &\le& 2\cdot 2^J (B_a+1)\cdot A_n\cdot \frac{m_n}{n} \cdot \frac{1}{m_n}\sup_{i=1,...,|I_{j,J,n}|}\Big|\sum_{l=1}^{i}\tilde W_l(u_j)^{\circ}\Big| \to 0.\label{lemma_ergodic_eq0000}
    \end{eqnarray}
    Combination of \reff{lemma_ergodic_eq0}, \reff{lemma_ergodic_eq00}, \reff{lemma_ergodic_eq000} and \reff{lemma_ergodic_eq0000} and applying $\limsup_{n\to\infty}$ and afterwards $\limsup_{J\to\infty}$, we obtain
    \[
        \frac{1}{n}\sum_{i=1}^{n}a_n(\frac{i}{n})\big\{\tilde W_i(\frac{i}{n}) - \IE \tilde W_0(\frac{i}{n})\big\} \pto 0.
    \]
    \end{enumerate}
    If $u \mapsto \IE \tilde W_0(u)$ has bounded variation, we have with some intermediate value $\xi_{i,n} \in [\frac{i-1}{n},\frac{i}{n}]$, 
    \begin{eqnarray*}
        &&\Big|\frac{1}{n}\sum_{i=1}^{n}a_n(\frac{i}{n})\IE \tilde W_0(\frac{i}{n}) - \int_0^{1}a_n(u) \IE \tilde W_0(u) du\Big|\\
        &\le& \frac{1}{n}\sum_{i=1}^{n}\big|a_n(\frac{i}{n})\IE \tilde W_0(\frac{i}{n}) - a_n(\xi_{i,n}) \IE \tilde W_0(\xi_{i,n})\big|\\
        &\le& \frac{A_n}{n}\cdot \frac{1}{A_n}\sum_{i=1}^{n}|a_n(\frac{i}{n}) - a_n(\xi_{i,n})|\cdot \sup_u\| \tilde W_0(u)\|_1\\
        &&\quad\quad + \frac{A_n}{n}\sum_{i=1}^{n}\big|\IE \tilde W_0(\frac{i}{n}) - \IE \tilde W_0(\xi_{i,n})\big| \to 0.
    \end{eqnarray*}
    If instead $h_n \to 0$, we have with some intermediate value $\xi_{i,n} \in [\frac{i-1}{n},\frac{i}{n}]$,
    \begin{eqnarray*}
        &&\Big|\frac{1}{n}\sum_{i=1}^{n}a_n(\frac{i}{n})\IE \tilde W_0(\frac{i}{n}) - \frac{1}{n}\sum_{i=1}^{n}a_n(\frac{i}{n})\IE \tilde W_0(v)\Big|\\
        &\le& \frac{1}{n}\sum_{i=1}^{n}|a_n(\frac{i}{n})|\cdot \sup_{|u-v| \le h_n}\|\tilde W_0(u) - \tilde W_0(v)\|_1 \to 0.
    \end{eqnarray*}
    Since $\frac{a_n(\cdot)}{A_n}$ has bounded variation uniformly in $n$,
    \[
        \Big|\frac{1}{n}\sum_{i=1}^{n}a_n(\frac{i}{n}) - \int_0^{1}a_n(u) du\Big| \le \frac{A_n}{n}\cdot \frac{1}{A_n}\sum_{i=1}^{n}|a_n(\frac{i}{n}) - a_n(\xi_{i,n})| \to 0.
    \]
\end{proof}

\begin{lem}\label{lemma_theorem_clt_mult}
    Let $\sF$ satisfy Assumptions \ref{ass_clt_process}, \ref{ass_clt_fcont} and \ref{ass1_vorher}. Then there exist constants $C_{cont} > 0, C_{\bar f} > 0$ such that for any $f\in \sF$,
    \begin{enumerate}
        \item[(i)] for any $j\ge 1$,
        \begin{eqnarray*}
            \|P_{i-j}f(Z_i,u)\|_2 &\le& D_{f,n}(u) \Delta(j),\\
            \sup_{i=1,...,n}\|f(Z_i,u)\|_2 &\le& C_{\Delta}\cdot D_{f,n}(u),\\
            \sup_{i,u}\|\bar f(Z_i,u)\|_2 \le C_{\bar f}, && \sup_{v,u}\|\bar f(\tilde Z_0(v),u)\|_2 \le C_{\bar f}.
        \end{eqnarray*}
        \item[(ii)] \begin{eqnarray}
        \|\bar f(Z_i,u) - \bar f(\tilde Z_i(\frac{i}{n}),u)\|_2 &\le& C_{cont}\cdot n^{-\varsigma s},\label{lemma_theorem_clt_mult_1}\\
        \|\bar f(\tilde Z_i(v_1),u_1) - \bar f(\tilde Z_i(v_2),u_2)\|_2 &\le& C_{cont}\cdot \big(|v_1 - v_2|^{\varsigma s} + |u_1 - u_2|^{\varsigma s}\big).\label{lemma_theorem_clt_mult_2}
    \end{eqnarray}
    \end{enumerate}
\end{lem}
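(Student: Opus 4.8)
The plan is to treat parts (i) and (ii) separately and, within each, to distinguish the case of Assumption~\ref{ass1} from the case of Assumptions~\ref{ass2} and~\ref{ass_clt_expansion_ass2}. For part (i), the first inequality follows from the elementary projection bound \reff{basic_bound_projection}: for $j\ge1$ the tower property gives $P_{i-j}f(Z_i,u)=P_{i-j}\E[f(Z_i,u)\mid\sG_{i-1}]$ (since $\sG_{i-j}\subseteq\sG_{i-1}$), so that $\|P_{i-j}f(Z_i,u)\|_2\le\delta_2^{\E[f(Z,u)\mid\sG_{\cdot-1}]}(j)$, which is $\le|D_{f,n}(u)|\Delta(j)$ by Lemma~\ref{depend_trans} under Assumption~\ref{ass1} and by \reff{depend_trans_2_eq22_part1} under Assumption~\ref{ass2}. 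The second inequality is exactly the last assertion of Lemma~\ref{depend_trans}, respectively \reff{depend_trans_2_eq22_part3}. The two uniform moment bounds follow from the class inequality $|\bar f(z,u)|\le|z|_{L_{\sF},s}^s(R(z,u)+R(0,u))+|\bar f(0,u)|$, H\"older's inequality and the moment conditions \reff{ass1_eq1}; for the $\tilde Z_0(v)$-version one additionally uses $\sup_{v,u}\|R(\tilde Z_0(v),u)\|_{2p}<\infty$ from Assumption~\ref{ass_clt_process} and the boundedness of $\|\tilde X_0(v)\|_{2sp/(p-1)}$, which itself follows from \reff{ass1_eq1} together with the two displays of Assumption~\ref{ass_clt_process}. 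Under Assumption~\ref{ass2} both moment bounds are immediate from Assumption~\ref{ass_clt_expansion_ass2}.

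For part (ii) under Assumption~\ref{ass1} (giving $x=1$), I would use the H\"older continuity of $\bar f$ directly, bounding $\|\bar f(Z_i,u)-\bar f(\tilde Z_i(\tfrac in),u)\|_2$ via H\"older's inequality with exponents $\tfrac{p}{p-1}$ and $p$ by $\big\||Z_i-\tilde Z_i(\tfrac in)|_{L_{\sF},s}^s\big\|_{2p/(p-1)}$ times a factor in $R$ that is uniformly bounded by part (i). It then remains to control $\sum_j L_{\sF,j}\|X_{i-j}-\tilde X_{i-j}(\tfrac in)\|_{2sp/(p-1)}^s$, and for this I would estimate, using both displays of Assumption~\ref{ass_clt_process}, $\|X_{i-j}-\tilde X_{i-j}(\tfrac in)\|_{2sp/(p-1)}\le\|X_{i-j}-\tilde X_{i-j}(\tfrac{i-j}{n})\|_{2sp/(p-1)}+\|\tilde X_{i-j}(\tfrac{i-j}{n})-\tilde X_{i-j}(\tfrac in)\|_{2sp/(p-1)}\le C_Xn^{-\varsigma}+C_X(\tfrac jn)^{\varsigma}$. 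The second estimate of (ii) is handled identically, except that $\tilde X_{i-j}(v_1)-\tilde X_{i-j}(v_2)$ retains the same index $i-j$, so only the genuinely time-Lipschitz bound enters and the clean power $|v_1-v_2|^{\varsigma s}$ results; the $u$-dependence is separated off first and controlled by Assumption~\ref{ass_clt_fcont}, using $|u_1-u_2|^{\varsigma}\le|u_1-u_2|^{\varsigma s}$ since $\varsigma s\le\varsigma$.

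For part (ii) under Assumption~\ref{ass2} (giving $x=\tfrac12$) the function is no longer pointwise H\"older, so I would instead exploit \reff{ass_clt_expansion_ass2_eq1}. Writing $a$ for the random increment of the process argument, I would split $\E|\bar f(\cdot)-\bar f(\cdot+a)|^2$ over $\{|a|_{L_{\sF},s}\le c\}$ and $\{|a|_{L_{\sF},s}>c\}$. On the first event, stationarity of $\tilde Z_i(\cdot)$ in $i$ reduces the bound to the supremum in \reff{ass_clt_expansion_ass2_eq1} and contributes a term of order $c^s$; on the second, H\"older's inequality with the $2\bar p$-moments of $\bar f$ from Assumption~\ref{ass_clt_expansion_ass2} together with a Markov bound on $\P(|a|_{L_{\sF},s}>c)$, using $\E|a|_{L_{\sF},s}^s\le C\,n^{-\varsigma s}$ (and $\le C\,|v_1-v_2|^{\varsigma s}$ for the second estimate), yields a decaying contribution. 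Optimising the truncation level $c$ then produces a positive power of the small quantity, and $x=\tfrac12$ is the exponent recorded in the statement.

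The main obstacle is the time-index mismatch appearing in both parts: in the sums $\sum_j L_{\sF,j}\|X_{i-j}-\tilde X_{i-j}(\tfrac in)\|^s$ the natural stationary companion of $X_{i-j}$ lives at time $\tfrac{i-j}{n}$, so bridging to evaluation at $\tfrac in$ costs a factor $(\tfrac jn)^{\varsigma s}$ that grows in $j$. Controlling $\sum_j L_{\sF,j}(\tfrac jn)^{\varsigma s}$ against the mere absolute summability of $L_{\sF}$ — either through a weighted summability estimate or by splitting the index range at a threshold increasing with $n$ — is the delicate bookkeeping on which the precise rates $n^{-\varsigma s x}$ ultimately rest.
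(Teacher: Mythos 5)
Parts (i) and (ii) under Assumption \ref{ass1} follow the paper's proof essentially step for step: the projection bound \reff{basic_bound_projection} combined with Lemma \ref{depend_trans} resp.\ Lemma \ref{depend_trans_2} (the paper actually applies \reff{basic_bound_projection} to $f(Z_i,u)$ directly in the Assumption \ref{ass1} case, without routing through the conditional expectation, but your Jensen detour is harmless), and the same H\"older split with exponents $\frac{p}{p-1},p$ followed by the termwise estimate $\|X_{i-l}-\tilde X_{i-l}(v)\|_{\frac{2ps}{p-1}} \le C_X\big(n^{-\varsigma}+|v-\frac{i}{n}|^{\varsigma}+(\frac{l}{n})^{\varsigma}\big)$. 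Regarding the ``delicate bookkeeping'' you flag at the end: the paper does no index splitting at all; it simply absorbs $\sum_{l}L_{\sF,l}l^{\varsigma s}$ into the constant $C_{cont}$, thereby tacitly assuming this weighted sum is finite --- a strictly stronger requirement than $|L_{\sF}|_1<\infty$. Your splitting device cannot remove this if the full rate $n^{-\varsigma s}$ is to be retained (with bare absolute summability one only gets a slower, though still vanishing, rate), so your instinct that something beyond $|L_{\sF}|_1<\infty$ is consumed here is correct; the paper's resolution is by fiat.

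The genuine gap is in your Assumption \ref{ass2} tail bound, and it is quantitative: your estimates do not deliver $x=\tfrac12$. First, \reff{ass_clt_expansion_ass2_eq1} bounds the \emph{squared} supremum by $Cc^s$, so on the $L^2$-scale the good-event term is of order $c^{s/2}$, not $c^s$. Second, your bad-event bound --- H\"older with exponent $\bar p$ on the squared difference and Markov applied to $\E|a|_{L_{\sF},s}^s$ --- gives, on the $L^2$-scale, a contribution of order $(\delta^{\varsigma s}c^{-s})^{\frac{\bar p-1}{2\bar p}}$ with $\delta$ the small quantity; balancing against $c^{s/2}$ yields the exponent $\varsigma s\cdot\frac{\bar p-1}{2(2\bar p-1)}\le \varsigma s/4$, which is $\bar p$-dependent and strictly below the claimed $\varsigma s/2$. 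The paper avoids this loss by splitting at the \emph{norm} level rather than the squared level: it bounds the bad-event part by $\big(\|\bar f(Z_i,u)\|_{2\bar p}+\|\bar f(\tilde Z_i(v),u)\|_{2\bar p}\big)\IP\big(|Z_i-\tilde Z_i(v)|_{L_{\sF},s}>c_n\big)^{\frac{\bar p-1}{2\bar p}}$ and then applies Markov with the $\frac{2\bar p s}{\bar p-1}$-th moment of the increment, so that the probability exponent cancels exactly, leaving the clean, $\bar p$-free factor $\big(\| |Z_i-\tilde Z_i(v)|_{L_{\sF},s}\|_{\frac{2\bar p s}{\bar p-1}}/c_n\big)^{s}\lesssim \delta^{\varsigma s}c_n^{-s}$; balancing $c_n^{s}$ against $\delta^{\varsigma s}c_n^{-s}$ gives $x=\tfrac12$. (Strictly, the paper's own good-event line writes $C_Rc_n^{s}$ where the square root would give $C_R^{1/2}c_n^{s/2}$, under which the balance yields $x=\tfrac13$; since only positivity of $x$ is used downstream this is immaterial, but it shows the optimization is not a formality.) You should therefore replace your $\E|a|^s$-Markov step by the paper's moment/Markov pairing, and actually carry out the balance rather than asserting the exponent.
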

\begin{proof}[Proof of Lemma \ref{lemma_theorem_clt_mult}]
    \begin{enumerate}
        \item[(i)] If Assumption \ref{ass1_vorher} is satisfied, we have by Lemma \ref{depend_trans} that
    \[
        \|P_{i-j}f(Z_i,u)\|_2 \le \|f(Z_i,u) - f(Z_i^{*(i-j)},u)\|_2 = \delta_2^{f(Z,u)}(j) \le D_{f,n}(u)\Delta(j).
    \]
     The second assertion follows from Lemma \ref{depend_trans}.
     \item[(ii)] Let $\bar C_R := \sup_{v,u_1,u_2}\|\frac{|\bar f(\tilde Z_0(v),u_1) - f(\tilde Z_0(v),u_2)|)}{|u_1 - u_2|^{\varsigma}}\|_{2} < \infty$ (by Assumption \ref{ass_clt_fcont}) and\\
     $C_R := \max\{\sup_{i,u}\|R(Z_i,u)\|_{2}, \sup_{u,v}\|R(\tilde Z_0(v),u)\|_{2}\}$. Then
     \begin{eqnarray}
         \|\bar f(\tilde Z_i(v),u_1) - \bar f(\tilde Z_i(v),u_2)\|_2 \le \bar C_R |u_1-u_2|^{\varsigma}.\label{lemma_theorem_clt_mult_contarg2}
    \end{eqnarray}
    We then have
    \begin{eqnarray*}
        \|\bar f(Z_i,u) - \bar f(\tilde Z_i(v),u)\|_2 &\le& \| |Z_i - \tilde Z_i(v)|_{L_{\sF,s}}^s(R(Z_i,u) + R(\tilde Z_i(v),u) \|_2\\
        &\le& \| |Z_i - \tilde Z_i(v)|_{L_{\sF,s}}^s \|_{\frac{2p}{p-1}}\big(\|R(Z_i,u)\|_{2p} + \|R(\tilde Z_i(v),u)\|_{2p}\big)\\
        &\le& 2C_R\| |Z_i - \tilde Z_i(v)|_{L_{\sF,s}}^s \|_{\frac{2p}{p-1}}.
    \end{eqnarray*}
    Furthermore,
    \begin{eqnarray*}
        \| |Z_i - \tilde Z_i(v)|_{L_{\sF},s}^s\|_{\frac{2 p}{ \bar p-1}} &\le& \sum_{l=0}^{\infty}L_{\sF,l}\| |X_{i-l} - \tilde X_{i-l}(v)|^s \|_{\frac{2p}{p -1}}\\
        &=& \sum_{l=0}^{i}L_{\sF,l}\|X_{i-l} - \tilde X_{i-l}(v) \|_{\frac{2p s}{p -1}}^s \\
        &\le& \sum_{l=0}^{i}L_{\sF,l}C_X^s\big(|v-\frac{i}{n}|^{\varsigma} + l^{\varsigma}n^{-\varsigma}\big)^s\\
        &\le& |v-\frac{i}{n}|^{\varsigma}\cdot C_X |L_{\sF}|_{1} + n^{-\varsigma}\cdot C_X \sum_{l=0}^{\infty}L_{\sF,l}l^{\varsigma s}\big\}.
    \end{eqnarray*}
    We obtain with $C_{cont} := 2\bar C_R + 2C_RC_X\big\{|L_{\sF}|_{1} + \sum_{j=0}^{\infty}L_{\sF,j}j^{\varsigma s}\big\}$ that 
    \begin{equation}
        \|\bar f(Z_{i},u) - \bar f(\tilde Z_i(v),u)\|_2 \le C_{cont}\cdot \Big[|v-\frac{i}{n}|^{\varsigma s} + n^{-\varsigma s}\Big].\label{lemma_theorem_clt_mult_contarg1_ass1}
    \end{equation}
    Furthermore, as above,
    \begin{eqnarray}
        \|f(\tilde Z_i(v_1),u) - f(\tilde Z_i(v_2),u)\|_2 &\le& 2C_R \| |\tilde Z_0(v_1) - \tilde Z_0(v_2)|_{L_{\sF},s}^s \|_{\frac{2p}{p-1}}\nonumber\\
        &\le& 2C_R\sum_{l=0}^{i}L_{\sF,l}\|\tilde X_{0}(v_1) - \tilde X_{0}(v_2)\|_{\frac{2ps}{p-1}}^s\nonumber\\
        &\le& 2C_R C_X |L_{\sF}|_1\cdot |v_1 - v_2|^{\varsigma s}\label{lemma_theorem_clt_mult_contarg12_ass1}
    \end{eqnarray}
    From \reff{lemma_theorem_clt_mult_contarg1_ass1}, we obtain \reff{lemma_theorem_clt_mult_1} with $v = \frac{i}{n}$. From \reff{lemma_theorem_clt_mult_contarg2} and \reff{lemma_theorem_clt_mult_contarg12_ass1}, we conclude \reff{lemma_theorem_clt_mult_2}. 
    \end{enumerate}
\end{proof}

\subsection{Proofs of Section \ref{sec_examples}}
\label{sec_examples_supp}

\begin{proof}[Proof of Lemma \ref{example_maximumlikelihood}]
    Put $D_{v,n}(u) = \sqrt{h}K_h(u-v)$. From (A1) and Assumption \ref{ass_clt_process} we obtain that  $\Delta(k) = O(\delta_{2M}^{X}(k))$, $C_R = 1+k\max\{C_X,1\}^{2M}$.
    
    Since $K$ is Lipschitz continuous and (A2) holds, we have
    \begin{eqnarray*}
        &&\sup_{|v-v'| \le n^{-3}, |\theta - \theta'|_2 \le n^{-3}}\big|\big(\nabla_{\theta}^j L_{n,h}(v,\theta) - \E \nabla_{\theta}^j L_{n,h}(v,\theta)\big)\\
        &&\quad\quad\quad\quad\quad\quad\quad\quad\quad\quad\quad\quad - \big(\nabla_{\theta}^j L_{n,h}(v',\theta') - \E \nabla_{\theta}^j L_{n,h}(v',\theta')\big)\big|_{\infty}\\
        &\le& \sup_{|v-v'| \le n^{-3}, |\theta - \theta'|_2 \le n^{-3}} \frac{C_R}{h^2}\big[ L_K|v-v'| + C_{\Theta}|\theta - \theta'|_2\big]\\
        &&\quad\quad\quad\quad\quad\quad\quad\quad\quad\quad\quad\quad\times \frac{1}{n}\sum_{i=k}^{n}\big(1 + |Z_i|_1^{M} + \E|Z_i|_1^{M}\big)\\
        &=& O_p(n^{-1}).
    \end{eqnarray*}
    
    Let $\Theta_{n}$ be a grid approximation of $\Theta$ such that for any $\theta \in \Theta$, there exists some $\theta'\in \Theta_n$ such that $|\theta-\theta'|_2 \le n^{-3}$. Since $\Theta \subset \R^{d_{\Theta}}$, it is possible to choose $\Theta_n$ such that $|\Theta_n| = O(n^{-6d_{\Theta}})$. Furthermore, define $V_n := \{in^{-3}:i=1,...,n\}$ as an approximation of $[0,1]$.
    
    As in Example \ref{example_regeression}, Corollary \ref{proposition_rosenthal_bound_rates} applied to
    \[
        \sF_j' = \{f_{v,\theta}: \theta \in \Theta_n, v \in V_n\}
    \]
    yields for $j \in \{0,1,2\}$ that
    \begin{equation}
        \sup_{v\in [\frac{h}{2},1-\frac{h}{2}]}\big|\nabla_{\theta}^j L_{n,h}(v,\theta) - \IE \nabla_{\theta}^j L_{n,h}(v,\theta)\big|_{\infty} = O_p\big(\tau_n\big).\label{example_maximumlikelihood_proof_eq1}
    \end{equation}
    
    Put $\tilde L_{n,h}(v,\theta) = \frac{1}{n}\sum_{i=1}^{n}K_h(i/n-v)\ell_{\theta}(\tilde Z_i(v))$. With (A1) it is easy to see that
    \begin{eqnarray}
        &&\big|\IE \nabla_{\theta}^j L_{n,h}(v,\theta) - \IE \nabla_{\theta}^j\tilde L_{n,h}(v,\theta)\big|_{\infty}\nonumber\\
        &\le& \frac{d_{\Theta}^j C_R}{n}\sum_{i=1}^{n}|K_h(i/n-v)|\cdot \| |Z_i - \tilde Z_i(v)|_1 \|_M\nonumber\\
        &&\quad\quad\quad\quad\quad\quad\quad\quad\quad\quad\quad\quad \times\big(1+\| |Z_i|_1\|_M^{M-1} + \| |\tilde Z_i(v)|_1 \|_M^{M-1}\big)\nonumber\\
        &\le& d_{\Theta}^j C_R|K|_{\infty}C_X(1+2C_X^{M-1})\big(n^{-1} + h\big).\label{example_maximumlikelihood_proof_eq2}
    \end{eqnarray}
    Finally, since $K$ has bounded variation and $\int K(u) du = 1$, uniformly in $v \in [\frac{h}{2},1-\frac{h}{2}]$ it holds that
    \begin{equation}
        \IE \nabla_{\theta}^j\tilde L_{n,h}(v,\theta) = \frac{1}{n}\sum_{i=1}^{n}K_h(i/n-v)\IE \nabla_{\theta}^j\ell_{\theta}(\tilde Z_1(v)) = \IE \nabla_{\theta}^j\ell_{\theta}(\tilde Z_1(v)) + O((nh)^{-1}).\label{example_maximumlikelihood_proof_eq3}
    \end{equation}
    From \reff{example_maximumlikelihood_proof_eq1}, \reff{example_maximumlikelihood_proof_eq2} and \reff{example_maximumlikelihood_proof_eq3} we obtain
    \begin{equation}
        \sup_{v\in [\frac{h}{2},1-\frac{h}{2}]}\sup_{\theta \in \Theta}\big|\nabla_{\theta}^j L_{n,h}(v,\theta) - \IE \nabla_{\theta}^j\ell_{\theta}(\tilde Z_1(v))\big|_{\infty} = O_p(\tau_n^{(j)}),\label{example_maximumlikelihood_proof_eq4}
    \end{equation}
    where
    \[
        \tau_n^{(j)} :=  \tau_n + (nh)^{-1} + h, \quad j\in \{0,2\}, \quad\quad \tau_n^{(1)} :=  \tau_n + (nh)^{-1} + B_h.
    \]
    
    By (A3) and \reff{example_maximumlikelihood_proof_eq4} for $j=0$, we obtain with standard arguments that if $\tau_n^{(0)} = o(1)$,
    \[
        \sup_{v\in[\frac{h}{2},1-\frac{h}{2}]}\big|\hat \theta_{n,h}(v) - \theta_0(v)\big|_{\infty} = o_p(1).
    \]
    
    Since $\hat \theta_{n,h}(v)$ is a minimizer of $\theta \mapsto L_{n,h}(v,\theta)$ and $\ell_{\theta}$ is twice continuously differentiable, we have the representation
    \begin{equation}
        \hat \theta_{n,h}(v) - \theta_0(v) = -\nabla_{\theta}^2 L_{n,h}(v,\bar\theta_v)^{-1}\nabla_{\theta} L_{n,h}(v,\theta_0(v)),\label{example_maximumlikelihood_proof_rep}
    \end{equation}
    where $\bar \theta_v \in \Theta$ fulfills $|\bar \theta_v - \theta_0(v)|_{\infty} \le |\hat \theta_{n,h}(v) - \theta_0(v)|_{\infty} = o_p(1)$.
    
    By (A2), we have
    \[
        \big|\IE \nabla_{\theta}^2 \ell_{\theta}(\tilde Z_0(v))\big|_{\theta = \theta_0(v)} - \IE \nabla_{\theta}^2 \ell_{\theta}(\tilde Z_0(v))\big|_{\theta = \bar \theta_v}\big|_{\infty} = O(|\theta_0(v) - \bar \theta_v|_{2}) = o_p(1).
    \]
    and thus with \reff{example_maximumlikelihood_proof_eq4},
    \begin{equation}
        \sup_{v\in [\frac{h}{2},1-\frac{h}{2}]}\big|\nabla_{\theta}^2 L_{n,h}(v,\bar\theta_v) - \IE \nabla_{\theta}^2\ell_{\theta}(\tilde Z_1(v))\big|_{\theta = \theta_0(v)}\big|_{\infty} = O_p(\tau_n^{(2)}) + o_p(1).\label{example_maximumlikelihood_proof_eq5}
    \end{equation}
    By (A3) and the dominated convergence theorem, $\IE \nabla_{\theta}\ell(\tilde Z_0(v)) = \nabla_{\theta}\IE\ell(\tilde Z_0(v)) = 0$. By \reff{example_maximumlikelihood_proof_eq4},
    \begin{eqnarray}
        \sup_{v\in [\frac{h}{2},1-\frac{h}{2}]}\big|\nabla_{\theta} L_{n,h}(v,\theta_0(v))\big|_{\infty} &=& \sup_{v\in [\frac{h}{2},1-\frac{h}{2}]}\big|\nabla_{\theta} L_{n,h}(v,\theta_0(v)) - \IE \nabla_{\theta}\ell(\tilde Z_0(v))\big|_{\infty}\nonumber\\
        &=& O_p(\tau_n^{(1)}).\label{example_maximumlikelihood_proof_eq6}
    \end{eqnarray}
    Inserting \reff{example_maximumlikelihood_proof_eq5} and \reff{example_maximumlikelihood_proof_eq6} into \reff{example_maximumlikelihood_proof_rep}, we obtain
    \[
        \sup_{v\in[\frac{h}{2},1-\frac{h}{2}]}\big|\hat \theta_{n,h}(v) - \theta_0(u)\big|_{\infty} = O_p(\tau_n^{(1)}).
    \]
    This yields an improved version of \reff{example_maximumlikelihood_proof_eq5}:
    \begin{equation}
        \sup_{v\in [\frac{h}{2},1-\frac{h}{2}]}\big|\nabla_{\theta}^2 L_{n,h}(v,\bar\theta_v) - \IE \nabla_{\theta}^2\ell_{\theta}(\tilde Z_1(v))\big|_{\theta = \theta_0(v)}\big|_{\infty} = O_p(\tau_n^{(2)}).\label{example_maximumlikelihood_proof_eq7}
    \end{equation}
    Inserting \reff{example_maximumlikelihood_proof_eq6} and \reff{example_maximumlikelihood_proof_eq7} into \reff{example_maximumlikelihood_proof_rep}, we obtain the assertion.
\end{proof}

\subsection{Form of the $V_n$-norm and connected quantities}
\label{sec_V_imply_supp}

\begin{lem}[Summation of polynomial and geometric decay]\label{lemma_summation} Let $\alpha > 1$ and $q \in \N$. Then it holds that
\begin{enumerate}
    \item[(i)]
\[
    \frac{1}{\alpha-1}q^{-\alpha+1}\le \sum_{j=q}^{\infty}j^{-\alpha} \le \frac{\max\{\alpha,2^{-\alpha+1}\}}{\alpha-1}q^{-\alpha+1}.
\]
    \item[(ii)] For $\sigma > 0$, $\kappa_2 \ge 1$
    \begin{eqnarray*}
        b_{\rho,\kappa_2,l}\cdot \sigma \cdot \log(\sigma^{-1}) \le \sum_{j=1}^{\infty}\min\{\sigma, \kappa_2 \rho^j\} &\le& b_{\rho,\kappa_2}\cdot \sigma \cdot \log(\sigma^{-1} \vee e),\\
        b_{\alpha,\kappa_2,l}\cdot \sigma \cdot \sigma^{-\frac{1}{\alpha}} \le \sum_{j=1}^{\infty}\min\{\sigma, \kappa_2 j^{-\alpha}\} &\le& b_{\alpha,\kappa_2}\cdot \sigma \cdot \max\{\sigma^{-\frac{1}{\alpha}} ,1\},
    \end{eqnarray*}
    where $b_{\rho,\kappa_2}, b_{\rho,\kappa_2,l}$, $b_{\alpha,\kappa_2}, b_{\alpha,\kappa_2,l}$ are constants only depending on $\rho,\kappa_2,\alpha$.
\end{enumerate}
\end{lem}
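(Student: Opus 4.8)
The plan is to derive all three estimates from elementary comparison arguments: for the polynomial tail in (i) I would compare the sum with the integral of the monotone function $x\mapsto x^{-\alpha}$, and for the two truncated sums in (ii) I would split the summation at the index where the two competing terms $\sigma$ and $\kappa_2\rho^{j}$ (respectively $\kappa_2 j^{-\alpha}$) cross, bounding the ``flat'' part below the crossover and the decaying tail above it separately.

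For (i), since $x\mapsto x^{-\alpha}$ is decreasing, $j^{-\alpha}\ge\int_j^{j+1}x^{-\alpha}\,dx$ summed over $j\ge q$ gives the lower bound $\sum_{j=q}^{\infty}j^{-\alpha}\ge\int_q^{\infty}x^{-\alpha}\,dx=\frac{1}{\alpha-1}q^{-\alpha+1}$. For the upper bound I would peel off the first term and compare the rest with the integral from $q$, obtaining $\sum_{j=q}^{\infty}j^{-\alpha}\le q^{-\alpha}+\int_q^{\infty}x^{-\alpha}\,dx\le q^{-\alpha+1}+\frac{1}{\alpha-1}q^{-\alpha+1}=\frac{\alpha}{\alpha-1}q^{-\alpha+1}$, where I used $q^{-\alpha}\le q^{-\alpha+1}$ because $q\ge1$. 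Since $\alpha>1$ forces $2^{-\alpha+1}<1<\alpha$, one has $\max\{\alpha,2^{-\alpha+1}\}=\alpha$, so this is exactly the claimed bound.

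For the geometric sum in (ii), set $j^{\ast}:=\log(\kappa_2/\sigma)/\log(1/\rho)$, the real number at which $\kappa_2\rho^{j}=\sigma$. For $j\le j^{\ast}$ the minimum equals $\sigma$ and for $j>j^{\ast}$ it equals $\kappa_2\rho^{j}$, so the sum splits into $\lfloor j^{\ast}\rfloor\,\sigma$ plus a geometric tail $\sum_{j>\lfloor j^{\ast}\rfloor}\kappa_2\rho^{j}$. The first piece is of order $j^{\ast}\sigma$, which for small $\sigma$ is of order $\sigma\log(\sigma^{-1})$; the tail is bounded above and below by a constant multiple of $\kappa_2\rho^{\lfloor j^{\ast}\rfloor}$, which is of order $\sigma$. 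Adding the two and absorbing the lower-order $\sigma$ into the logarithmic term (using $\log(\sigma^{-1}\vee e)\ge1$) yields both inequalities. For the polynomial sum I would proceed identically with $j^{\ast}:=(\kappa_2/\sigma)^{1/\alpha}$: the flat part contributes of order $j^{\ast}\sigma=\kappa_2^{1/\alpha}\sigma\cdot\sigma^{-1/\alpha}$, and the tail $\sum_{j>\lfloor j^{\ast}\rfloor}\kappa_2 j^{-\alpha}$ is controlled by part (i), giving a contribution of the same order $\frac{1}{\alpha-1}\kappa_2^{1/\alpha}\sigma^{1-1/\alpha}$. Summing these and inserting $\max\{\sigma^{-1/\alpha},1\}$ to cover the case $\sigma\ge1$ gives the upper bound, while the flat part alone gives the lower bound.

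The arguments contain no genuine difficulty; the main obstacle is purely bookkeeping, namely choosing the constants $b_{\rho,\kappa_2},b_{\alpha,\kappa_2},b_{\rho,\kappa_2,l},b_{\alpha,\kappa_2,l}$ so that the estimates remain valid across all regimes of $\sigma$. The delicate points are (a) the boundary regime where the crossover index $j^{\ast}$ drops below $1$ (large $\sigma$), in which every term already equals the decaying term and the sum is a bounded constant, so that the stated lower bounds only carry content for $\sigma$ below a threshold depending on $\kappa_2$; and (b) ensuring that when $\lfloor j^{\ast}\rfloor$ is small the flat part is still bounded below by a fixed fraction of $j^{\ast}\sigma$. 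Both are handled by treating the ranges $\sigma$ small and $\sigma$ bounded away from $0$ separately and shrinking the lower constants accordingly.
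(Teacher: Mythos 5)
Your argument is essentially the paper's own: integral comparison of the monotone function $x\mapsto x^{-\alpha}$ for (i), and in (ii) splitting each truncated sum at the crossover index where $\sigma$ meets $\kappa_2\rho^j$ resp.\ $\kappa_2 j^{-\alpha}$, with the flat part giving the main term and the tail (a geometric sum resp.\ an application of part (i)) absorbed at the same order. If anything your bookkeeping is tighter: the peel-one-term bound $\sum_{j\ge q}j^{-\alpha}\le q^{-\alpha}+\int_q^{\infty}x^{-\alpha}\,dx\le\frac{\alpha}{\alpha-1}q^{-\alpha+1}$ avoids the paper's $q=1$ versus $q\ge 2$ case split, and your explicit caveat that the polynomial lower bound only carries content for $\sigma$ below a $\kappa_2$-dependent threshold is precisely the point the paper's proof glosses over when it minimizes $\sigma q+\frac{\kappa_2}{\alpha-1}q^{1-\alpha}$ at the real point $q=(\kappa_2/\sigma)^{1/\alpha}$, which lies outside $\N$ once $\sigma>\kappa_2$.
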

\begin{proof}[Proof of Lemma \ref{lemma_summation}] \begin{enumerate}
    \item[(i)] Upper bound: If $q \ge 2$, then
\begin{eqnarray*}
    \sum_{j=q}^{\infty}j^{-\alpha} &=& \sum_{j=q}^{\infty}\int_{j-1}^{j}j^{-\alpha} dx \le  \sum_{j=q}^{\infty}\int_{j-1}^{j}x^{-\alpha} dx = \int_{q-1}^{\infty}x^{-\alpha} dx = \frac{1}{-\alpha+1}x^{-\alpha+1}\Big|_{q-1}^{\infty}\\
    &=& \frac{1}{\alpha-1}(q-1)^{-\alpha+1} = \frac{1}{\alpha-1}q^{-\alpha+1}\cdot (\frac{q-1}{q})^{-\alpha+1} \le \frac{2^{-\alpha+1}}{\alpha-1}q^{-\alpha+1}.
\end{eqnarray*}
If $q = 1$, then $\sum_{j=q}^{\infty}j^{-\alpha} = 1+ \sum_{j=q+1}^{\infty}j^{-\alpha} \le 1 + \frac{1}{\alpha-1}q^{-\alpha+1} = \frac{\alpha}{\alpha-1}$.

Lower bound: Using similar decomposition arguments as above, we have
\begin{eqnarray*}
    \sum_{j=q}^{\infty}j^{-\alpha} &\ge& \sum_{j=q}^{\infty}\int_{j}^{j+1}x^{-\alpha} dx = \int_q^{\infty}x^{-\alpha} dx = \frac{1}{-\alpha+1}x^{-\alpha+1}\Big|_q^{\infty}= \frac{1}{\alpha-1}q^{-\alpha+1}.
\end{eqnarray*}
    \item[(ii)]
    \begin{itemize}
        \item \emph{Exponential decay:} Upper bound: First let $a := \max\{\lfloor \frac{\log(\sigma/\kappa_2)}{\log(\rho)}\rfloor,0\}+1$. Then we have
    \begin{eqnarray*}
        \sum_{j=0}^{\infty}\min\{\sigma, \kappa_2\rho^j\} &\le& \sum_{j=0}^{a-1}\sigma + \kappa_2\sum_{j=a}^{\infty}\rho^j = a \sigma + \kappa_2 \frac{\rho^a}{1-\rho}\\
        &\le& a\sigma + \frac{\kappa_2}{1-\rho}\min\{\frac{\sigma}{\kappa_2},1\} \le a\sigma + \frac{\sigma}{1-\rho}\\
        &\le& \sigma\cdot \Big[\frac{1}{\log(\rho^{-1})}\max\{\log(\kappa_2/\sigma),0\} + \frac{2}{1-\rho}\Big]\\
        &\le& \sigma\cdot \Big[\frac{1}{\log(\rho^{-1})}\max\{\log(\sigma^{-1}),0\} + \frac{\log(\kappa_2)\vee 0}{\log(\rho^{-1})} + \frac{2}{1-\rho}\Big]\\
        &\le& b_{\rho,\kappa_2}\cdot \sigma\cdot \log(\sigma^{-1}\vee e),
    \end{eqnarray*}
    where $b_{\rho,\kappa_2} := 2(\log(\kappa_2) \vee 1)\cdot \frac{1}{\log(\rho^{-1})}\big[1 + \frac{2\log(\rho^{-1})}{1-\rho}\big]$.
    
    Lower Bound: Put $\beta(q) = \kappa_2 \sum_{j=q}^{\infty}\rho^j = \frac{\kappa_2}{1-\rho}\rho^q$. Then
    \begin{eqnarray*}
        \sum_{j=1}^{\infty}\min\{\sigma,\kappa_2 \rho^j\} &\ge& \sigma (\hat q-1) + \beta(\hat q),
    \end{eqnarray*}
    where $\hat q = \min\{q\in\N: \frac{\sigma}{\kappa_2} \ge \rho^q\}$. We have $\hat q \ge  \frac{\log(\sigma/\kappa_2)}{\log(\rho)} =: \underline{q}$ and $\hat q \le \underline{q}+1$. Thus
    \[
        \sum_{j=1}^{\infty}\min\{\sigma,\kappa_2 \rho^j\} \ge \sigma (\underline{q}-1) + \beta(\underline{q}+1).
    \]
    Now consider the case $\frac{\sigma}{\kappa_2}<\rho^2$, that is, $\frac{\log(\sigma/\kappa_2)}{\log(\rho)} \ge 2$. Then, $\underline{q}-1 \ge \frac{1}{2}\underline{q}$, and $\overline{q} \le 2\frac{\log(\sigma/\kappa_2)}{\log(\rho)}$. We obtain
    \begin{eqnarray*}
        \sum_{j=1}^{\infty}\min\{\sigma,\kappa_2 \rho^j\} &\ge& \frac{1}{2}\sigma  \frac{\log(\sigma/\kappa_2)}{\log(\rho)} + \frac{\kappa_2 \rho}{1-\rho}\rho^{\frac{\log(\sigma/\kappa_2)}{\log(\rho)}} = \frac{1}{2}\sigma  \frac{\log(\sigma/\kappa_2)}{\log(\rho)} + \frac{\rho}{1-\rho}\sigma\\
        &\ge& \frac{1}{2}\Big(\frac{\rho}{1-\rho} + \frac{1}{\log(\rho^{-1})}\Big) \sigma \log(\sigma^{-1}\kappa_2),
    \end{eqnarray*}
    that is, the assertion holds with $b_{\rho,\kappa_2,l} := \frac{1}{2}\big(\frac{\rho}{1-\rho} + \frac{1}{\log(\rho^{-1})}\big)$.

    \item \emph{Polynomial decay:} Upper bound: Let $a := \lfloor (\frac{\sigma}{\kappa_2})^{-\frac{1}{\alpha}} \rfloor+1 \ge (\frac{\sigma}{\kappa_2})^{-\frac{1}{\alpha}}$. Then we have by (i):
    \begin{eqnarray*}
        \sum_{j=1}^{\infty}\min\{\sigma, \kappa_2j^{-\alpha}\} &\le& \sum_{j=1}^{a}\sigma + \kappa_2\sum_{j=a+1}^{\infty}j^{-\alpha} = a \sigma + \frac{\kappa_2}{\alpha-1}a^{-\alpha+1}\\
        &\le& a \sigma + \frac{\kappa_2^{\frac{1}{\alpha}}}{\alpha-1}\sigma^{\frac{\alpha-1}{\alpha}}\\
        &\le& \sigma\cdot \Big[\kappa_2^{\frac{1}{\alpha}}\sigma^{-\frac{1}{\alpha}} + 1 + \frac{\kappa_2^{\frac{1}{\alpha}}}{\alpha-1}\sigma^{-\frac{1}{\alpha}}\Big]\\
        &\le& \sigma\cdot \Big[\frac{\alpha}{\alpha-1}\kappa_2^{\frac{1}{\alpha}}\sigma^{-\frac{1}{\alpha}} + 1\Big]\\
        &\le& b_{\alpha,\kappa_2}\cdot \sigma \cdot \max\{\sigma^{-\frac{1}{\alpha}},1\},
    \end{eqnarray*}
    where $b_{\alpha,\kappa_2} := 2\frac{\alpha}{\alpha-1}(\kappa_2 \vee 1)^{\frac{1}{\alpha}}$.
    
    Lower Bound: Put $\beta(q) = \kappa_2 \sum_{j=q}^{\infty}j^{-\alpha}$. By (i), $\beta(q) \ge \frac{\kappa_2}{\alpha-1}q^{-\alpha+1}$. Then
    \begin{eqnarray*}
        \sum_{j=1}^{\infty}\min\{\sigma,\kappa_2 j^{-\alpha}\} &\ge& \min_{q\in\N}\{\sigma q + \beta(q)\}\\
        &\ge& \min_{q\in\N}\{\sigma q + \frac{\kappa_2}{\alpha-1}q^{-\alpha+1}\}.
    \end{eqnarray*}
    Elementary analysis yields that the minimum is achieved for $q = \kappa_2^{\frac{1}{\alpha}}\cdot \sigma^{-\frac{1}{a}} = (\frac{\kappa_2}{\sigma})^{\frac{1}{\alpha}}$, that is,
    \[
        \sum_{j=1}^{\infty}\min\{\sigma,\kappa_2 j^{-\alpha}\} \ge \frac{\alpha}{\alpha-1} \kappa_2^{\frac{1}{\alpha}}\cdot \sigma^{\frac{\alpha-1}{\alpha}},
    \]
    the assertion holds with $b_{\alpha,\kappa_2,l} := \frac{\alpha}{\alpha-1} \kappa_2^{\frac{1}{\alpha}}$.
        \end{itemize}
\end{enumerate}
\end{proof}

\begin{lem}[Values of $q^{*}$, $r(\delta)$] \label{r_d_values}
    \begin{itemize}
        \item Polynomial decay $\Delta(j) = \kappa j^{-\alpha}$ $(\alpha > 1$). Then there exist constants $c_{\alpha,\kappa}^{(i)}, C_{\alpha,\kappa}^{(i)} > 0$, $i = 1,2$ only depending on $\kappa,\alpha$ such that
        \[
            c_{\alpha,\kappa}^{(1)}\max\{ x^{-\frac{1}{\alpha}},1\} \le q^{*}(x) \le C_{\alpha,\kappa}^{(1)}\max\{ x^{-\frac{1}{\alpha}},1\},
        \]
        and
        \[
            c_{\alpha,\kappa}^{(2)} \min\{\delta^{\frac{\alpha}{\alpha-1}},\delta\} \le r(\delta) \le C_{\alpha,\kappa}^{(2)} \min\{\delta^{\frac{\alpha}{\alpha-1}},\delta\}.
        \]
        \item Geometric decay $\Delta(j) = \kappa\rho^j$ ($\rho \in (0,1)$). Then there exist constants $c_{\rho,\kappa}^{(i)},C_{\rho,\kappa}^{(i)} > 0$, $i = 1,2$ only depending on $\kappa,\rho$ such that
        \[
            c_{\rho,\kappa}^{(1)}\max\{ \log(x^{-1}),1\} \le q^{*}(x) \le C_{\rho,\kappa}^{(1)}\max\{ \log(x^{-1}),1\},
        \]
        and
        \[
            c_{\rho,\kappa}^{(2)} \frac{\delta}{\log(\delta^{-1} \vee e)} \le r(\delta) \le C_{\rho,\kappa}^{(2)} \frac{\delta}{\log(\delta^{-1} \vee e)}.
        \]
    \end{itemize}
\end{lem}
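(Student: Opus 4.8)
The plan is to reduce everything to explicit two-sided estimates on the tail sum $\beta(q) = \sum_{j=q}^{\infty}\Delta(j)$ and then invert twice: first to pass from $\beta$ to $q^{*}$, and once more to pass from $q^{*}$ to $r$. Recall from Theorem \ref{proposition_rosenthal_bound} and Lemma \ref{lemma_chaining} that $q^{*}(x) = \min\{q\in\N : \beta_{norm}(q)\le x\}$ with $\beta_{norm}(q) = \beta(q)/q$ decreasing, and $r(\delta) = \max\{r>0 : q^{*}(r)r \le \delta\}$. First I would obtain matching bounds for $\beta(q)$. In the polynomial case $\Delta(j) = \kappa j^{-\alpha}$, Lemma \ref{lemma_summation}(i) gives $\beta(q) \asymp q^{-\alpha+1}$ with constants depending only on $\kappa,\alpha$, hence $\beta_{norm}(q) \asymp q^{-\alpha}$. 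In the geometric case $\Delta(j)=\kappa\rho^{j}$ the geometric sum is exact, $\beta(q) = \frac{\kappa}{1-\rho}\rho^{q}$, so $\beta_{norm}(q) = \frac{\kappa}{1-\rho}\,\rho^{q}/q$.

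Second, I would invert these to bound $q^{*}$. For the upper bound on $q^{*}(x)$ I would exhibit an explicit integer $q_0$ of the claimed order with $\beta_{norm}(q_0)\le x$; minimality then gives $q^{*}(x)\le q_0$. For the lower bound I would use that $q^{*}(x)-1$ fails the defining inequality, i.e. $\beta_{norm}(q^{*}(x)-1) > x$, combined with the lower bound on the decreasing function $\beta_{norm}$. In the polynomial case $\beta_{norm}(q)\asymp q^{-\alpha}$ inverts cleanly to $q^{*}(x) \asymp \max\{x^{-1/\alpha},1\}$, the $\max$ with $1$ accounting for $q\ge 1$ once $x\gtrsim 1$. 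In the geometric case $\beta_{norm}(q)\asymp \rho^{q}/q$; taking logarithms the condition $\rho^{q}/q \lesssim x$ reads $q\log(\rho^{-1}) + \log q \gtrsim \log(x^{-1})$, and since $\log q$ is of lower order than $q$ this yields $q^{*}(x)\asymp \max\{\log(x^{-1}),1\}$.

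Third, I would invert once more to bound $r$. Plugging the $q^{*}$ estimates into $q^{*}(r)r$, the polynomial case gives $q^{*}(r)r \asymp \max\{r^{-1/\alpha},1\}\,r$, which is $\asymp r^{(\alpha-1)/\alpha}$ for $r\le 1$ and $\asymp r$ for $r\ge 1$; solving $q^{*}(r)r \le \delta$ for the largest admissible $r$ gives $r(\delta)\asymp \delta^{\alpha/(\alpha-1)}$ for small $\delta$ and $r(\delta)\asymp\delta$ for large $\delta$, i.e. $r(\delta)\asymp\min\{\delta^{\alpha/(\alpha-1)},\delta\}$ (note $\alpha/(\alpha-1)>1$, so the two branches meet near $\delta\approx 1$). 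In the geometric case $q^{*}(r)r \asymp r\log(r^{-1})$ for small $r$ and $\asymp r$ otherwise; inverting $r\log(r^{-1})\le\delta$ gives $r(\delta)\asymp\delta/\log(\delta^{-1})$ for small $\delta$, which merges with the $r(\delta)\asymp\delta$ branch through the regularization $\log(\delta^{-1}\vee e)$.

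The main obstacle I expect is the rigorous inversion in the geometric case, where $q^{*}(r)r$ behaves like $r\log(r^{-1})$ and its functional inverse is only $\delta/\log(\delta^{-1})$ up to constants. I would handle this by checking directly that a trial value $r = c\,\delta/\log(\delta^{-1}\vee e)$ satisfies $q^{*}(r)r \le \delta$ for a small constant $c$ (using $\log(r^{-1})\asymp\log(\delta^{-1})$ since the correction $\log\log(\delta^{-1})$ is negligible), and that $r = C\,\delta/\log(\delta^{-1}\vee e)$ violates it for large $C$, thereby sandwiching $r(\delta)$. A secondary technical point throughout is the integer-valued nature of $q^{*}$ and the consistent treatment of the regularizing $\max\{\cdot,1\}$ and $\vee e$ terms near the crossovers $x\approx 1$ and $\delta\approx 1$, which I would dispatch by a short case distinction between the small- and large-argument regimes.
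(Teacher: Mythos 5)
Your overall plan coincides with the paper's proof: Lemma \ref{lemma_summation}(i) (resp.\ the exact geometric sum) yields two-sided bounds $\beta_{norm}(q)\asymp q^{-\alpha}$ (resp.\ $\asymp \rho^q/q$); the upper bound on $q^{*}$ follows by exhibiting an explicit trial integer $q_0$ with $\beta_{norm}(q_0)\le x$; and both bounds on $r(\delta)$ follow by sandwiching with trial values $r$ for which one checks $q^{*}(r)r\le\delta$ or $q^{*}(r)r>\delta$, using the monotonicity of $r\mapsto q^{*}(r)r$ established in Lemma \ref{lemma_chaining}(i). Your anticipated obstacle in the geometric inversion is handled in the paper exactly by the direct checks you describe: the trial value $\tilde r \asymp \delta/\log(\delta^{-1}\vee e)$ is verified using the extrema of $x\mapsto \log(x^{-1}\vee e)/\log((ax^{-1})\vee e)$ and the inequality $\log x + \log\log x \le 2\log x$, which absorbs the $\log\log$ correction.

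One step as you state it would fail, namely the lower bound on $q^{*}(x)$. You propose to use that $q^{*}(x)-1$ fails the defining inequality, i.e.\ $\beta_{norm}(q^{*}(x)-1) > x$, \emph{combined with the lower bound} on $\beta_{norm}$. These two inequalities point in the same direction (both bound $\beta_{norm}(q^{*}(x)-1)$ from below) and yield nothing; pairing the failure inequality with the \emph{upper} bound $\beta_{norm}(q)\le C_{\alpha,\kappa}q^{-\alpha}$ instead gives $q^{*}(x)-1 < (C_{\alpha,\kappa}/x)^{1/\alpha}$, which is an upper bound on $q^{*}(x)$ --- the wrong direction. The correct pairing, used in the paper, is the defining inequality at $q^{*}(x)$ itself, $\beta_{norm}(q^{*}(x))\le x$, combined with the pointwise \emph{lower} bound $\beta_{norm}(q)\ge c_{\alpha,\kappa}q^{-\alpha}$, which immediately yields $q^{*}(x)\ge (c_{\alpha,\kappa}/x)^{1/\alpha}$ in the polynomial case. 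In the geometric case the paper argues equivalently by exhibiting a specific $\tilde q$ with $\beta_{norm}(\tilde q) > x$ and concluding $q^{*}(x)\ge \tilde q$ from the monotonicity of $\beta_{norm}$; note that here $\tilde q$ must be taken as a fixed \emph{fraction} (the paper uses one quarter) of $\log(x^{-1})/\log(\rho^{-1})$, so that $\rho^{\tilde q}\gtrsim x^{1/2}$ dominates the competing factor $1/\tilde q$ --- the naive choice $\tilde q \approx \log(x^{-1})/\log(\rho^{-1})$ would not suffice. This is a one-line repair within your own framework; with it, the remainder of your plan goes through exactly as in the paper.
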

\begin{proof}[Proof of Lemma \ref{r_d_values}]
\begin{enumerate}
    \item By Lemma \ref{lemma_summation}(i), $\beta_{norm}(q) = \frac{\beta(q)}{q} \in [c_{\alpha,\kappa}q^{-\alpha},C_{\alpha,\kappa}q^{-\alpha}]$ with $c_{\alpha,\kappa} = \frac{\kappa}{\alpha-1}$, $C_{\alpha,\kappa} = \kappa \frac{\max\{\alpha,2^{-\alpha+1}\}}{\alpha-1}$. In the following we assume w.l.o.g. that $C_{\alpha,\kappa} > 1$ and $c_{\alpha,\kappa} < 1$.
    
    \begin{itemize}
        \item $q^{*}(x)$ Upper bound: For any $x > 0$,
        \[
            q^{*}(x) = \min\{q\in\N: \beta_{norm}(q) \le x\} \le \min\{q \in \N: q \ge (\frac{x}{C_{\alpha,\kappa}})^{-\frac{1}{\alpha}}\}  = \lceil (\frac{x}{C_{\alpha,\kappa}})^{-\frac{1}{\alpha}}\rceil.
        \]
    Especially we obtain $q^{*}(x) \le (\frac{x}{C_{\alpha,\kappa}})^{-\frac{1}{\alpha}}+1 \le 2C_{\alpha,\kappa}^{\frac{1}{\alpha}}\max\{x^{-\frac{1}{\alpha}},1\}$. The assertion holds with $C_{\alpha,\kappa}^{(1)} := 2\max\{C_{\alpha,\kappa},1\}^{\frac{1}{\alpha}}$.
    
    \item $q^{*}(x)$ Lower bound: Similarly to above,
    \[
        q^{*}(x) \ge \lceil (\frac{x}{c_{\alpha,\kappa}})^{-\frac{1}{\alpha}}\rceil \ge \big(\frac{x}{c_{\alpha,\kappa}}\big)^{-\frac{1}{\alpha}} = c_{\alpha,\kappa}^{\frac{1}{\alpha}}x^{-\frac{1}{\alpha}}.
    \]
    On the other hand, $q^{*}(x) \ge 1 \ge c_{\alpha,\kappa}^{\frac{1}{\alpha}}$, which yields the assertion with $c_{\alpha,\kappa}^{(1)} = \min\{c_{\alpha,\kappa},1\}^{\frac{1}{\alpha}}$.
    \item $r(\delta)$ Upper bound: Put $r = 2^{\frac{\alpha}{\alpha-1}}c_{\alpha,\kappa}^{-\frac{1}{\alpha-1}}\delta^{\frac{\alpha}{\alpha-1}}$. Then we have
    \[
        q^{*}(r)r \ge \lceil(\frac{r}{c_{\alpha,\kappa}})^{-\frac{1}{\alpha}}\rceil r = 2^{\frac{\alpha}{\alpha-1}}c_{\alpha,\kappa}^{-\frac{1}{\alpha-1}} \lceil 2^{-\frac{1}{\alpha-1}}c_{\alpha,\kappa}^{\frac{1}{\alpha-1}}\delta^{-\frac{1}{\alpha-1}}\rceil  \delta^{\frac{\alpha}{\alpha-1}} \ge 2\delta > \delta.
    \]
    By definition of $r(\cdot)$, $r(\delta) \le r$. It was already shown in Lemma \ref{lemma_chaining_i} that $r(\delta)\le \delta$ holds for all $\delta > 0$. We obtain the assertion with $C_{\alpha,\kappa}^{(2)} =  2^{\frac{\alpha}{\alpha-1}}c_{\alpha,\kappa}^{-\frac{1}{\alpha-1}}$.
    
    \item $r(\delta)$ Lower bound: First consider the case $\delta < C_{\alpha,\kappa}$.
    
    Put $r = 2^{-\frac{\alpha}{\alpha-1}}C_{\alpha,\kappa}^{-\frac{1}{\alpha-1}}\delta^{\frac{\alpha}{\alpha-1}}$. Since $x:= 2^{\frac{1}{\alpha-1}} C_{\alpha,\kappa}^{\frac{1}{\alpha-1}}\delta^{-\frac{1}{\alpha-1}} > 1$, $\lceil x \rceil \le 2x$ and thus
    \[
        q^{*}(r)r \le \lceil(\frac{r}{C_{\alpha,\kappa}})^{-\frac{1}{\alpha}}\rceil r = 2^{-\frac{\alpha}{\alpha-1}}C_{\alpha,\kappa}^{-\frac{1}{\alpha-1}}\lceil 2^{\frac{1}{\alpha-1}} C_{\alpha,\kappa}^{\frac{1}{\alpha-1}}\delta^{-\frac{1}{\alpha-1}}\rceil\delta^{\frac{\alpha}{\alpha-1}} \le 2\cdot 2^{-1}\delta \le \delta.
    \]
    By definition of $r(\cdot)$, $r(\delta) \ge r = 2^{-\frac{\alpha}{\alpha-1}}\min\{(\frac{\delta}{C_{\alpha,\kappa}})^{\frac{1}{\alpha-1}},1\} \delta$.
    
    In the case $\delta > C_{\alpha,\kappa}$, we have
    \[
        q^{*}(\delta)\delta = \lceil(\frac{\delta}{C_{\alpha,\kappa}})^{-\frac{1}{\alpha}}\rceil \delta \le 1\cdot \delta \le \delta,
    \]
    thus $r(\delta)\ge \delta = \min\{(\frac{\delta}{C_{\alpha,\kappa}})^{\frac{1}{\alpha-1}},1\} \delta \ge 2^{-\frac{\alpha}{\alpha-1}}\min\{(\frac{\delta}{C_{\alpha,\kappa}})^{\frac{1}{\alpha-1}},1\} \delta$. We conclude that the assertion holds with $c_{\alpha,\kappa}^{(2)} = 2^{-\frac{\alpha}{\alpha-1}}C_{\alpha,\kappa}^{-\frac{1}{\alpha-1}}$.
    \end{itemize}
     \item We have $\beta_{norm}(q) = \frac{\beta(q)}{q} =  C_{\rho,\kappa}\frac{\rho^{q}}{q}$, where $C_{\rho,\kappa} = \frac{\kappa \rho}{1-\rho}$. In the following we assume w.l.o.g. that $C_{\rho,\kappa} > 8$.
        
    \begin{itemize}
        \item $q^{*}(x)$ Upper bound: Put $\psi(x) = \max\{\log(x^{-1}),1\}$. Define $\tilde q = \lceil\frac{\psi(\frac{x}{C_{\rho,\kappa}\log(\rho^{-1})})}{\log(\rho^{-1})}\rceil$. Then we have
        \[
            \beta_{norm}(\tilde q) \le C_{\rho,\kappa} \frac{\rho^{\log(\big(\frac{x}{C_{\rho,\kappa}\log(\rho^{-1})}\big)^{-1})/\log(\rho^{-1})}}{\tilde q} \le \frac{\frac{x}{\log(\rho^{-1})}}{\tilde q} \le \frac{x}{\psi(\frac{x}{C_{\rho,\kappa}\log(\rho^{-1})})} \le x,
        \]
        thus
        \[
            q^{*}(x) = \min\{q\in\N: \beta_{norm}(q) \le x\} \le \tilde q = \Big\lceil\frac{\psi(\frac{x}{C_{\rho,\kappa}\log(\rho^{-1})})}{\log(\rho^{-1})}\Big\rceil.
        \]
        Especially we obtain
        \[
            q^{*}(x) \le \frac{1}{\log(\rho^{-1})}\big(\psi(x) + \log(C_{\rho,\kappa}\log(\rho{-1}))\big) + 1 \le \frac{2(1 + \log(C_{\rho,\kappa}\log(\rho^{-1})))}{\log(\rho^{-1})} \psi(x),
        \]
        that is, the assertion holds with $C_{\rho,\kappa}^{(1)} = \frac{2(1 + \log(C_{\rho,\kappa}\log(\rho{-1})))}{\log(\rho^{-1})}$.
        
        \item $q^{*}(x)$ Lower Bound: Case 1: Assume that $x < C_{\rho,\kappa}\log(\rho^{-1})\rho^4$. Define $\tilde q = \lceil\frac{1}{4}\frac{\log( (\frac{x}{C_{\rho,\kappa}\log(\rho^{-1})})^{-1})}{\log(\rho^{-1})}\rceil \ge 1$. Then $\tilde q \le \frac{1}{2}\frac{\log( (\frac{x}{C_{\rho,\kappa}\log(\rho^{-1})})^{-1})}{\log(\rho^{-1})}$, and thus
        \[
            \beta_{norm}(\tilde q) \ge C_{\rho,k}\frac{\Big(\frac{x}{C_{\rho,\kappa}\log(\rho^{-1})}\Big)^{1/2}}{\tilde q} \ge (C_{\rho,\kappa}\log(\rho^{-1}))^{1/2}\frac{x^{1/2}}{\log( (\frac{x}{C_{\rho,\kappa}\log(\rho^{-1})})^{-1/2})} > x
        \]
        since
        \[
            (\frac{x}{C_{\rho,\kappa}\log(\rho^{-1})})^{-1/2} > \log( (\frac{x}{C_{\rho,\kappa}\log(\rho^{-1})})^{-1/2}).
        \]
        We have therefore shown that for $x < C_{\rho,\kappa}\log(\rho^{-1})\rho^{4}$,
        \begin{equation}
            q^{*}(x) \ge \tilde q = \max\{1,\tilde q\}.\label{qstar_upperbound_geometric}
        \end{equation}
        Case 2: If $x \ge C_{\rho,\kappa}\log(\rho^{-1})\rho^{4}$, then $\tilde q \le 1$, that is,
        \[
            q^{*}(x) \ge 1 = \max\{1, \tilde q\}.
        \]
        
        We have shown that for all $x > 0$,
        \[
            q^{*}(x) \ge \max\{1,\tilde q\}.
        \]
        Since
        \begin{eqnarray*}
            \tilde q &\ge& \frac{1}{4}\frac{\log( (\frac{x}{C_{\rho,\kappa}\log(\rho^{-1})})^{-1})}{\log(\rho^{-1})} \ge \frac{1}{4\log(\rho^{-1})}\big[\log(x^{-1}) + \log(C_{\rho,\kappa}\log(\rho^{-1}))\big]\\
            &\ge& \frac{1}{4\log(\rho^{-1})}\log(x^{-1}),
        \end{eqnarray*}
        the assertion follows with $c_{\rho,\kappa}^{(1)} = \frac{1}{4\log(\rho^{-1})}$.
        \item $r(\delta)$ Upper bound: Put $\tilde r = \frac{2(c_{\rho,\kappa}^{(1)})^{-1}\delta}{\log((2^{-1}c_{\rho,\kappa}^{(1)}\delta^{-1}) \vee e)}$. Then we have
        \begin{eqnarray*}
            q^{*}(\tilde r) \tilde r &\ge& c_{\rho,\kappa}^{(1)} \log(\tilde r^{-1} \vee e) \cdot \tilde r\\
            &=& \frac{2\delta}{\log((2^{-1}c_{\rho,\kappa}^{(1)}\delta^{-1}) \vee e)}\cdot \log( [2^{-1}c_{\rho,\kappa}^{(1)} \delta^{-1} \log((2^{-1}c_{\rho,\kappa}^{(1)}\delta^{-1})\vee e)] \vee e)\\
            &\ge& \frac{2\delta}{\log((2^{-1}c_{\rho,\kappa}^{(1)}\delta^{-1}) \vee e)}\cdot \log( [2^{-1}c_{\rho,\kappa}^{(1)} \delta^{-1}] \vee e) = 2\delta > \delta.
        \end{eqnarray*}
        By definition of $r(\cdot)$, we obtain
        \[
            r(\delta) \le \tilde r.
        \]
        For $a \in (0,1)$, the function $(0,\infty) \to (0,\infty), x \mapsto \frac{\log(x^{-1} \vee e)}{\log((a x^{-1}) \vee e)}$ attains its maximum at $x = ae^{-1}$ with maximum value $1+\log(a^{-1})$. Thus
        \[
            \tilde r\le 2(c_{\rho,\kappa}^{(1)})^{-1}(1+\log(2^{-1}(c_{\rho,\kappa}^{(1)})^{-1}))\cdot \frac{\delta}{\log(\delta^{-1}\vee e)},
        \]
        that is, the assertion holds with $C_{\rho,\kappa}^{(2)} = 2(c_{\rho,\kappa}^{(1)})^{-1}(1+\log(2^{-1}(c_{\rho,\kappa}^{(1)})^{-1}))$.
        \item $r(\delta)$ Lower Bound: Put $\tilde r = \frac{2^{-1}(C_{\rho,\kappa}^{(1)})^{-1}\delta}{\log((2C_{\rho,\kappa}^{(1)}\delta^{-1}) \vee e)}$. Then
        \begin{eqnarray*}
            q^{*}(\tilde r) \tilde r &\le& C_{\rho,\kappa}^{(1)} \log(\tilde r^{-1} \vee e) \cdot \tilde r\\
            &=& \frac{2^{-1}\delta}{\log((2C_{\rho,\kappa}^{(1)}\delta^{-1}) \vee e)}\cdot \log( [2C_{\rho,\kappa}^{(1)} \delta^{-1} \log((2C_{\rho,\kappa}^{(1)}\delta^{-1})\vee e)] \vee e)\\
            &\le& \frac{2^{-1}\delta}{\log((C_{\rho,\kappa}^{(1)}\delta^{-1}) \vee e)}\cdot \big[\log( (2C_{\rho,\kappa}^{(1)} \delta^{-1}) \vee e) + \log \log( (2C_{\rho,\kappa}^{(1)} \delta^{-1}) \vee e)\big]\\
            &\le& \delta,
        \end{eqnarray*}
        where the last step is due to $\log(x) + \log\log(x) \le 2\log(x)$ for $x \ge e$. By definition of $r(\cdot)$, we obtain
        \[
            r(\delta) \ge \tilde r.
        \]
        For $a > 1$, the function $(0,\infty) \to (0,\infty), x \mapsto \frac{\log(x^{-1} \vee e)}{\log((a x^{-1}) \vee e)}$ attains its minimum at $x = e^{-1}$ with minimum value $\frac{1}{1+\log(a)}$. We therefore obtain
        \[
            \tilde r\ge \frac{(C_{\rho,\kappa}^{(1)})^{-1}}{2(1+\log(2 C_{\rho,\kappa}^{(1)}))} \frac{\delta}{\log(\delta^{-1}\vee e)},
        \]
        that is, the assertion holds with $c_{\rho,\kappa}^{(2)} = \frac{(C_{\rho,\kappa}^{(1)})^{-1}}{2(1+\log(2 C_{\rho,\kappa}^{(1)}))}$.
    \end{itemize}
\end{enumerate}
\end{proof}

\begin{lem}[Form of $V_n$]\label{lemma_form_v2}
\begin{enumerate}
    \item Polynomial decay $\Delta(j) = \kappa j^{-\alpha}$ (where $\alpha > 1$): Then there exist some constants $C_{\alpha,\kappa}^{(3)}, c_{\alpha,\kappa}^{(3)}$ only depending on $\kappa,\alpha,\ID_n$ such that
    \[
        c_{\alpha,\kappa}^{(3)}\|f\|_{2,n}\max\{\|f\|_{2,n}^{-\frac{1}{\alpha}},1\} \le V_n(f) \le C_{\alpha,\kappa}^{(3)}\|f\|_{2,n}\max\{\|f\|_{2,n}^{-\frac{1}{\alpha}},1\}.
    \]
    \item Geometric decay $\Delta(j) = \kappa\rho^j$ (where $\rho\in (0,1)$): Then there exist some constants $c_{\rho,\kappa}^{(3)}, C_{\rho,\kappa}^{(3)}$ only depending on $\kappa,\rho,\ID_n$ such that
    \[
        c_{\rho,\kappa}^{(3)}\|f\|_{2,n}\max\{\log(\norm{f}_{2,n}^{-1}),1\} \le V_n(f) \le C_{\rho,\kappa}^{(3)}\|f\|_{2,n}\max\{\log(\norm{f}_{2,n}^{-1}),1\}.
    \]
\end{enumerate}
\end{lem}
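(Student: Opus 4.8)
The plan is to reduce both statements to the summation estimates already established in Lemma \ref{lemma_summation}(ii), applied with $\sigma := \|f\|_{2,n}$ and $\kappa_2 := \ID_n\kappa$. With this notation the definition of $V$ reads
\[
V(f) = \sigma + \sum_{k=1}^{\infty}\min\{\sigma,\kappa_2 k^{-\alpha}\}
\]
in the polynomial case, and with $\kappa_2\rho^{k}$ in place of $\kappa_2 k^{-\alpha}$ in the geometric case. First I would invoke Lemma \ref{lemma_summation}(ii) to sandwich the sum: in the polynomial case between $b_{\alpha,\kappa_2,l}\,\sigma\,\sigma^{-1/\alpha}$ and $b_{\alpha,\kappa_2}\,\sigma\max\{\sigma^{-1/\alpha},1\}$, and in the geometric case between $b_{\rho,\kappa_2,l}\,\sigma\log(\sigma^{-1})$ and $b_{\rho,\kappa_2}\,\sigma\log(\sigma^{-1}\vee e)$. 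The only elementary identity needed is $\log(\sigma^{-1}\vee e)=\max\{\log(\sigma^{-1}),1\}$, valid since $\log$ is increasing and $\log e=1$; this already reproduces the target profile $\max\{\log(\sigma^{-1}),1\}$ of the geometric statement.

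For the upper bounds I would simply add the leading term $\sigma$ to the upper estimate of the sum and absorb it using that the profile factor $\max\{\sigma^{-1/\alpha},1\}$ (respectively $\max\{\log(\sigma^{-1}),1\}$) is always $\ge 1$; this produces $C_{\alpha,\kappa}^{(3)}=1+b_{\alpha,\kappa_2}$ and $C_{\rho,\kappa}^{(3)}=1+b_{\rho,\kappa_2}$, each depending only on $\kappa,\alpha,\ID_n$ (resp. $\kappa,\rho,\ID_n$) as required.

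For the lower bounds a short case split is needed, since the lower estimates of Lemma \ref{lemma_summation}(ii) carry $\sigma^{-1/\alpha}$ (resp. $\log(\sigma^{-1})$) rather than the target maximum. In the polynomial case this is clean: when $\sigma\le 1$ one has $\max\{\sigma^{-1/\alpha},1\}=\sigma^{-1/\alpha}$ and the sum's lower bound already gives the claim, whereas when $\sigma>1$ the profile factor equals $1$ and $V(f)\ge\sigma=\sigma\cdot 1$ suffices; taking $c_{\alpha,\kappa}^{(3)}=\min\{1,b_{\alpha,\kappa_2,l}\}$ closes it. In the geometric case I would split at $\sigma=e^{-1}$: for $\sigma\ge e^{-1}$ the factor $\max\{\log(\sigma^{-1}),1\}$ is $1$ and the leading term handles it, while for $\sigma<e^{-1}$ the factor is $\log(\sigma^{-1})$ and the sum's lower bound applies.

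The main obstacle I anticipate is precisely the bookkeeping in this last geometric sub-case: the lower estimate of Lemma \ref{lemma_summation}(ii) for $\kappa_2\rho^{j}$ is only effective for small $\sigma$ (its proof uses $\sigma/\kappa_2<\rho^2$), so on the residual band $\kappa_2\rho^2\le\sigma<e^{-1}$ I must argue separately. There $\log(\sigma^{-1})$ lies between $1$ and the fixed constant $\log(\kappa_2^{-1}\rho^{-2})$, so the factor $\max\{\log(\sigma^{-1}),1\}$ is bounded and the leading term $V(f)\ge\sigma$ already dominates $c\,\sigma\max\{\log(\sigma^{-1}),1\}$ for a suitable $c$. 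Choosing $c_{\rho,\kappa}^{(3)}$ to be the minimum of $b_{\rho,\kappa_2,l}$ and the reciprocal of that band constant then yields the claim, with all constants depending only on $\kappa,\rho,\ID_n$. Everything else is a routine combination of the two displayed inequalities.
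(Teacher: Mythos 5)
Your proposal is correct and takes essentially the same route as the paper, whose entire proof consists of invoking Lemma \ref{lemma_summation}(ii) with $\kappa_2 = \kappa\,\ID_n$ and remarking that the maximum in the lower bounds is supplied by the additional summand $\|f\|_{2,n}$ in $V(f)$. Your case splits (in particular the residual geometric band $\kappa_2\rho^2 \le \sigma < e^{-1}$, where the lower estimate of Lemma \ref{lemma_summation}(ii) is only derived for $\sigma/\kappa_2 < \rho^2$) merely spell out details the paper leaves implicit.
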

\begin{proof}[Proof of Lemma \ref{lemma_form_v2}] The assertions follow from Lemma \ref{lemma_summation}(ii) by taking $\kappa_2 = \kappa \ID_n$. The maximum in the lower bounds is obtained due to the additional summand $\|f\|_{2,n}$ in $V_n(f)$.

\end{proof}

The following lemma formulates the entropy integral in terms of the well-known bracketing numbers in terms of the $\|\cdot\|_{2,n}$-norm in the case that $\sup_{n\in\N}\ID_n < \infty$. For this, we use the upper bounds of $V_n$ given in Lemma \ref{lemma_form_v2}.

\begin{lem}\label{cor_integralcondition_explicit}
    \begin{enumerate}
        \item Polynomial decay $\Delta(j) = \kappa j^{-\alpha}$ (where $\alpha > 1$). Then for any $\sigma \in (0,C_{\alpha,\kappa}^{(3)})$,
        \[
            \int_0^{\sigma}\sqrt{\IH(\varepsilon,\sF,V_n)} d\varepsilon \le C_{\alpha,\kappa}^{(3)}\frac{\alpha-1}{\alpha}\int_0^{(\frac{\sigma}{C_{\alpha,\kappa}^{(3)}})^{\frac{\alpha}{\alpha-1}}}u^{-\frac{1}{\alpha}}\sqrt{\IH(u,\sF,\|\cdot\|_{2,n})} du,
        \]
        where $C_{\alpha,\kappa}^{(3)}$ is from lemma \ref{lemma_form_v2}.
        \item Exponential decay $\Delta(j) = \kappa \rho^j$ (where $\rho \in (0,1)$). Then for any $\sigma \in (0,e^{-1}C_{\rho,\kappa}^{(3)})$,
        \[
            \int_0^{\sigma}\sqrt{\IH(\varepsilon,\sF,V_n)} d\varepsilon \le C_{\rho,\kappa}^{(3)}\int_0^{E^{-}(\frac{\sigma}{C_{\rho,\kappa}^{(3)}})} \log(u^{-1})\sqrt{\IH(u,\sF,\|\cdot\|_{2,n})} du,
        \]
        where $E^{-}(x) = \frac{x}{\log(x^{-1})}$ and $C_{\rho,\kappa}^{(3)}$ is from lemma \ref{lemma_form_v2}.
    \end{enumerate}
\end{lem}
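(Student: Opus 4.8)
The plan is to transfer bracketing numbers measured in $V$ to bracketing numbers measured in $\|\cdot\|_{2,n}$ by means of the explicit upper bounds on $V$ from Lemma \ref{lemma_form_v2}, and then to reduce each claimed inequality to a monotone change of variables in the entropy integral. First I would record the relevant \emph{comparison function} $\phi$. In the polynomial case Lemma \ref{lemma_form_v2} gives $V(g)\le\phi(\|g\|_{2,n})$ with $\phi(u):=C_{\alpha,\kappa}^{(3)}\,u\,\max\{u^{-1/\alpha},1\}$, and in the exponential case with $\phi(u):=C_{\rho,\kappa}^{(3)}\,u\,\max\{\log(u^{-1}),1\}$. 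In both cases $\phi$ is continuous and strictly increasing on $(0,\infty)$: on the nontrivial lower branch one has $\phi(u)=C_{\alpha,\kappa}^{(3)}u^{(\alpha-1)/\alpha}$, respectively $\phi(u)=-C_{\rho,\kappa}^{(3)}u\log u$ with derivative $C_{\rho,\kappa}^{(3)}(\log(u^{-1})-1)>0$ for $u<e^{-1}$, and on the upper branch $\phi$ is linear, so $\phi$ is invertible.

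The key observation is that any $\delta$-bracket $[l,u]$ for $\|\cdot\|_{2,n}$ is automatically a $\phi(\delta)$-bracket for $V$: since $u-l\ge 0$ and $\|u-l\|_{2,n}\le\delta$, the bound on $V$ together with monotonicity of $\phi$ yields $V(u-l)\le\phi(\|u-l\|_{2,n})\le\phi(\delta)$. Hence $\N(\phi(\delta),\sF,V)\le\N(\delta,\sF,\|\cdot\|_{2,n})$, equivalently $\IH(\varepsilon,\sF,V)\le\IH(\phi^{-1}(\varepsilon),\sF,\|\cdot\|_{2,n})$ for every $\varepsilon>0$. Substituting this into the entropy integral and then setting $u=\phi^{-1}(\varepsilon)$, so that $\varepsilon=\phi(u)$ and $d\varepsilon=\phi'(u)\,du$, reduces everything to computing $\phi'$ and $\phi^{-1}(\sigma)$. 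Restricting $\sigma$ so that $\phi^{-1}(\sigma)$ remains on the lower branch (that is $\sigma<C_{\alpha,\kappa}^{(3)}$, respectively $\sigma<e^{-1}C_{\rho,\kappa}^{(3)}$) keeps the $\max$ equal to its nontrivial term throughout the range of integration.

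In the polynomial case this is immediate: $\phi'(u)=C_{\alpha,\kappa}^{(3)}\frac{\alpha-1}{\alpha}u^{-1/\alpha}$ and $\phi^{-1}(\sigma)=(\sigma/C_{\alpha,\kappa}^{(3)})^{\alpha/(\alpha-1)}$, and plugging these in reproduces the first assertion verbatim. In the exponential case $\phi'(u)=C_{\rho,\kappa}^{(3)}(\log(u^{-1})-1)\le C_{\rho,\kappa}^{(3)}\log(u^{-1})$, so after discarding the $-1$ one obtains the integrand $C_{\rho,\kappa}^{(3)}\log(u^{-1})\sqrt{\IH(u,\sF,\|\cdot\|_{2,n})}$. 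The remaining task there is to compare the true upper limit $u^\ast:=\phi^{-1}(\sigma)$ with the stated limit $E^{-}(\sigma/C_{\rho,\kappa}^{(3)})=\frac{\sigma/C_{\rho,\kappa}^{(3)}}{\log(C_{\rho,\kappa}^{(3)}/\sigma)}$: from $u^\ast\log((u^\ast)^{-1})=\sigma/C_{\rho,\kappa}^{(3)}$ and $\log((u^\ast)^{-1})>1$ on the lower branch one gets $u^\ast<\sigma/C_{\rho,\kappa}^{(3)}$, hence $\log(C_{\rho,\kappa}^{(3)}/\sigma)\le\log((u^\ast)^{-1})$ and therefore $u^\ast\le E^{-}(\sigma/C_{\rho,\kappa}^{(3)})$. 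As the integrand is nonnegative, enlarging the upper limit from $u^\ast$ to $E^{-}(\sigma/C_{\rho,\kappa}^{(3)})$ only increases the integral, which gives the second assertion.

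I expect the only mildly delicate points to be this last comparison of the two upper limits in the exponential case, and the bookkeeping verification that $\phi$ is genuinely monotone across the kink where its two branches meet (so that $\phi^{-1}$ is well defined on the whole range); both are elementary. Everything else is a routine monotone change of variables, so no substantial obstacle is anticipated.
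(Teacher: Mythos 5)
Your proposal is correct and follows essentially the same route as the paper: transfer brackets via the upper bound on $V$ from Lemma \ref{lemma_form_v2}, obtaining $\N(\varepsilon,\sF,V)\le\N(\phi^{-1}(\varepsilon),\sF,\|\cdot\|_{2,n})$, and then perform a monotone change of variables in the entropy integral. The only (harmless) difference is in the exponential case: the paper substitutes the explicit approximate inverse $E^{-}$ directly, verifying $E(E^{-}(x))\le x$ and absorbing the slack in the Jacobian bound $d\varepsilon\le c\log(u^{-1})\,du$, whereas you use the exact inverse $\phi^{-1}$, bound $\phi'(u)\le C_{\rho,\kappa}^{(3)}\log(u^{-1})$, and absorb the slack by enlarging the upper limit via $\phi^{-1}(\sigma)\le E^{-}(\sigma/C_{\rho,\kappa}^{(3)})$ — both versions are sound.
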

\begin{proof}[Proof of Lemma \ref{cor_integralcondition_explicit}]
\begin{enumerate}
    \item By Lemma \ref{lemma_form_v2}, $V_n(f) \le C_{\alpha,\kappa}^{(3)}\|f\|_{2,n}\max\{\|f\|_{2,n}^{-\frac{1}{\alpha}},1\}$. We abbreviate $c = C_{\alpha,\kappa}^{(3)}$ in the following.
        
    Let $\varepsilon \in (0,c)$ and $(l_j,u_j)$, $j = 1,...,N$  brackets such that $\|u_j - l_j\|_{2,n} \le (\frac{\varepsilon}{c})^{\frac{\alpha}{\alpha-1}}$. Then
    \[
        V_n(u_j-l_j) \le c\max\{\|u_j - l_j\|_{2,n},\|u_j - l_j\|_{2,n}^{\frac{\alpha-1}{\alpha}}\} \le c\max\Big\{(\frac{\varepsilon}{c})^{\frac{\alpha}{\alpha-1}},\frac{\varepsilon}{c}\Big\} \le c \cdot \frac{\varepsilon}{c} = \varepsilon.
    \]
    Therefore, the bracketing number fulfill the relation
    \[
        \N(\varepsilon,\sF, V_n) \le \N\Big((\frac{\varepsilon}{c})^{\frac{\alpha}{\alpha-1}},\sF, \|\cdot\|_{2,n}\Big).
    \]
    We conclude that for $\sigma \in (0,c)$, 
    \begin{eqnarray*}
        \int_0^{\sigma}\sqrt{\IH(\varepsilon,\sF, V_n)} d \varepsilon &\le& \int_0^{\sigma} \sqrt{\IH\Big((\frac{\varepsilon}{c})^{\frac{\alpha}{\alpha-1}},\sF, \|\cdot\|_{2,n}\Big)} d\varepsilon\\
        &=& c\frac{\alpha-1}{\alpha}\int_0^{(\frac{\sigma}{c})^{\frac{\alpha}{\alpha-1}}} u^{-\frac{1}{\alpha}}\sqrt{\IH(u,\sF, \|\cdot\|_{2,n})} du.
    \end{eqnarray*}
    In the last step, we used the substitution $u = (\frac{\varepsilon}{c})^{\frac{\alpha}{\alpha-1}}$ which leads to $\frac{du}{d\varepsilon}=\frac{\alpha}{\alpha-1}\cdot \frac{1}{c}\cdot (\frac{\varepsilon}{c})^{\frac{1}{\alpha-1}} = \frac{\alpha}{\alpha-1}\cdot \frac{1}{c}\cdot u^{\frac{1}{\alpha}}$.
    \item By Lemma \ref{lemma_form_v2}, $V_n(f) \le C_{\rho,\kappa}^{(3)} E(\|f\|_{2,n})$ with $E(x) = x \max\{\log(x^{-1}),1\}$. We abbreviate $c = C_{\rho,\kappa}^{(3)}$ in the following.
    
    We first collect some properties of $E$. Put $E^{-}(x) = \frac{x}{\log(x^{-1}\vee e)}$. In the case $x > e^{-1}$, we have $E(E^{-}(x)) = x$. In the case $x \le e^{-1}$, we have
        \[
            E(E^{-}(x)) = \frac{x}{\log(x^{-1})}\cdot \log\Big(\frac{x^{-1}}{\log(x^{-1})^{-1}}\Big) \le \frac{x}{\log(x^{-1})}\log(x^{-1}) = x.
        \]
        This shows that for all $x > 0$,
        \begin{equation}
            E(E^{-}(x)) \le x.\label{cor_integralcondition_explicit_eq1}
        \end{equation}
        Furthermore, for $x < e^{-1}$,
        \begin{equation}
            \log(E^{-}(x)^{-1}) = \log(x^{-1} \log(x^{-1})) \ge \log(x^{-1}).\label{cor_integralcondition_explicit_eq2}
        \end{equation}

    Now let $\varepsilon \in (0,1)$ and $(l_j,u_j)$, $j = 1,...,N$  brackets such that $\|u_j - l_j\|_{2,n} \le E^{-}(\frac{\varepsilon}{c})$. Then by \reff{cor_integralcondition_explicit_eq1},
    \[
        V_n(u_j-l_j) \le c E(E^{-}(\frac{\varepsilon}{c})) \le c\cdot \frac{\varepsilon}{c} = \varepsilon.
    \]
    Therefore, we have the following relation between the bracketing numbers
    \[
        \N(\varepsilon,\sF, V_n) \le \N\Big(E^{-}(\frac{\varepsilon}{c}),\sF, \|\cdot\|_{2,n}\Big).
    \]
    We conclude that for $\sigma \in (0,c e^{-1})$, 
    \[
        \int_0^{\sigma}\sqrt{\IH(\varepsilon,\sF, V_n)} d \varepsilon \le \int_0^{\sigma} \sqrt{\IH\Big(E^{-}(\frac{\varepsilon}{c}),\sF, \|\cdot\|_2\Big)} d\varepsilon \le c\int_0^{E^{-}(\frac{\sigma}{c})} \log(u^{-1})\sqrt{\IH(u,\sF, \|\cdot\|_2)} du.
    \]
    In the last step, we used the substitution $u = E^{-}(\frac{\varepsilon}{c})$ which leads to $\frac{du}{d\varepsilon}=\frac{1}{c}\cdot \frac{1+\log((\varepsilon/c)^{-1})}{\log((\varepsilon/c)^{-1})^2}$, and with \reff{cor_integralcondition_explicit_eq2} we obtain
    \[
        d\varepsilon = c\frac{\log((\varepsilon/c)^{-1})^2}{1+\log((\varepsilon/c)^{-1})} du \le c\log((\varepsilon/c)^{-1}) du \le c \log(E^{-}(\frac{\varepsilon}{c})^{-1}) du = c \log(u^{-1}) du.
    \]
\end{enumerate}
\end{proof}


\end{document}